\theoremstyle{definition}
\newtheorem{lem}{Lemma}[section]
\newtheorem{cor}[lem]{Corollary}
\newtheorem{prop}[lem]{Proposition}
\newtheorem{thm}[lem]{Theorem}
\newtheorem{defn}[lem]{Definition}
\newtheorem{ex}[lem]{Example}
\newtheorem{notation}[lem]{Notation}
\newtheorem{rem}[lem]{Remark}
\renewcommand{\geq}{\geqslant}
\renewcommand{\leq}{\leqslant}
\numberwithin{equation}{section}
\begin{document}
\sloppy
\author{Saba Al-Kaseasbeh}
\address{Department of Mathematics\\
	Tafila Technical University\\
	Tafilah, Jordan}
\email[S.~Al-Kaseasbeh]{kaseabeh@ttu.edu.jo}
\keywords{Noetherian, SFT}
\subjclass[2010]{Primary: 13C05, 13A15, 13C13}
\author{Jim Coykendall}
\address{Department of Mathematical Sciences\\
	Clemson University\\
	Clemson, SC}
\email[J.~Coykendall]{jcoyken@clemson.edu}

\title{Adjacency-Like Conditions and Induced Ideal Graphs}
\begin{abstract}
In this paper we examine some natural ideal conditions and show how graphs can be defined that
give a visualization of these conditions. We examine the interplay between the multiplicative
ideal theory and the graph theoretic structure of the associated graph. Behavior of these graphs under standard ring extensions are studied, and in conjunction with the theory, some classical results and connections are made.
\end{abstract}

\maketitle

%\linenumbers

\section{Introduction and Background}

Recently there has been much attention paid to various aspects of commutative algebra that lend
themselves to a graph-theoretic approach. The origins of some of the aspects of this recent interest in the interplay
between graphs and commutative algebra can perhaps be traced (chronologically) to the papers
\cite{beck} and \cite{andersonlivingston}. The Anderson-Livingston paper defined the notion of
a zero divisor graph that is more commonly used today, and it has motivated an industry of work on the interplay between commutative algebra and graph theory. We recall that a zero divisor graph of a commutative ring, $R$, has
vertex set defined to be the set of nonzero zero divisors, and it is declared that there is an edge
between $a,b\in R$ if and only if $ab=0$. In \cite{andersonlivingston} the following remarkable
theorem is proved:

\begin{thm}\label{mrb}
Let $R$ be a commutative ring with identity and $Z(R)$ its zero divisor graph. Then $Z(R)$ is
connected and has diameter less than or equal to 3.
\end{thm}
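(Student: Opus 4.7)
The plan is to fix two distinct nonzero zero divisors $a, b \in R$ and produce a walk of length at most $3$ between them in $Z(R)$. The natural starting point is the dichotomy $ab = 0$ versus $ab \neq 0$: in the first case $a$ and $b$ are already adjacent and $d(a,b) = 1$, so the real content lies in the second case, where intermediate vertices must be manufactured out of the available annihilator data.

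Assume $ab \neq 0$. Because $a$ and $b$ are vertices of $Z(R)$, I can choose nonzero $x, y \in R$ with $ax = 0$ and $by = 0$; necessarily $x, y$ are themselves nonzero zero divisors, so they are legitimate vertices. The first thing I would dispose of is the easy sub-case where one of these witnesses already connects the two sides: if $ay = 0$ then $a - y - b$ is a path of length $2$, and symmetrically if $bx = 0$ then $a - x - b$ works. A routine check is needed here: one must verify $y \neq a,b$ and $x \neq a,b$, but any such equality, combined with the sub-case hypothesis, would force $ab = 0$, contradicting the standing assumption.

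The remaining and only slightly more delicate case is $ay \neq 0$ and $bx \neq 0$. The key move is to examine the single element $xy$. If $xy = 0$, then $a - x - y - b$ is a candidate path of length $3$; the four vertices are pairwise distinct because $x = y$ would give $ay = ax = 0$ contradicting the case hypothesis, while the previous paragraph's argument rules out $x \in \{a,b\}$ and $y \in \{a,b\}$. If instead $xy \neq 0$, then $xy$ is a nonzero zero divisor (indeed $a(xy) = (ax)y = 0$), and it is annihilated simultaneously by $a$ and $b$ via $a(xy) = 0$ and $(xy)b = x(yb) = 0$, so $a - xy - b$ is a path of length $2$ provided $xy \notin \{a,b\}$; an identification like $xy = a$ yields $a^2 = 0$ and then $ab = b(xy) = x(by) = 0$, again contradicting $ab \neq 0$.

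The main obstacle is not conceptual but rather bookkeeping: in each branch one must rule out accidental coincidences of the newly introduced vertex with $a$, with $b$, or with another auxiliary element, since any such coincidence would either break the path or reduce it to a shorter one. In every such coincidence the hypotheses of the branch conspire to force $ab = 0$, contradicting the standing assumption of the non-trivial case, so the entire argument reduces to a clean finite case analysis driven by whether a single product among $\{ab, ay, bx, xy\}$ vanishes.
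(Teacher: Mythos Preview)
Your argument is correct. It is the classical element-level case analysis: fix witnesses $x,y$ with $ax=0=by$, then branch on which of $ab,\ ay,\ bx,\ xy$ vanish. One small imprecision: in the sub-case $ay\neq 0$, $bx\neq 0$, $xy=0$, the exclusions $x\neq a$ and $y\neq b$ do not follow from ``the previous paragraph's argument'' (which used $ay=0$ or $bx=0$) but rather from the current hypothesis $xy=0$, since $x=a$ gives $ay=xy=0$ and $y=b$ gives $bx=xy=0$. This is easy to fix and does not affect correctness.

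The paper's route is genuinely different. Rather than arguing with elements, the paper first proves an ideal-theoretic analogue (Theorem~\ref{gen}): for nonzero proper ideals $I,J$ with nonzero annihilators, one of the paths
\[
I\leftrightarrow J\,\mathrm{Ann}(I)\leftrightarrow \mathrm{Ann}(J)\leftrightarrow J,\qquad
I\leftrightarrow \mathrm{Ann}(I)\leftrightarrow I\,\mathrm{Ann}(J)\leftrightarrow J,\qquad
I\leftrightarrow \mathrm{Ann}(I)\leftrightarrow J
\]
always works, the last occurring when $J\,\mathrm{Ann}(I)=I\,\mathrm{Ann}(J)=0$, which forces $\mathrm{Ann}(I)=\mathrm{Ann}(J)$. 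The Anderson--Livingston theorem is then recovered by specializing to principal ideals and handling the degenerate case $(a)=(b)$ separately. Your approach is more elementary and self-contained, essentially the original Anderson--Livingston proof; the paper's approach is less direct but yields the stronger statement about the annihilating-ideal graph $GZ^*(R)$ for free, which is the real point of that section.
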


The authors consider this theorem far from obvious at first blush and find it amazing that any
commutative ring with identity must have such a well-behaved structure for its zero divisor
graph.

These works have inspired myraid questions and lines of study from numerous authors. For
example, the concept of an irreducible divisor graph was introduced by the second author and J.
Maney in \cite{CMan}. Among other things, it is shown that many factorization properties, like
unique factorization or the half-factorial property, can be seen in the properties of the
irreducible divisor graph(s). Additionally, variants of the zero divisor graph have been studied by numerous authors, for example \cite{CMan} and \cite{BC2015}, and \cite{AL2016}. Of these three, \cite{BC2015} has particular relevance to the theme of this paper, as in this work, subtle variants of atomicity in integral domains are revealed by the connectedness of an associated graph. Other studies have focused on various properties of zero-divisor graphs and their variants (for example, 
the interested reader can consult \cite{ABS2016},  \cite{ACS2005}, \cite{L2018}  among others). 

Aside from zero divisor graphs, there has been much other work done in the intersection of commutative algebra and graph theory since the 1990's. In the influential paper \cite{V1990}, Cohen-Macaulay graphs were studied and (among other things) a strong connection was made between the unmixedness of an ideal of a polynomial ring associated to a complex and the Cohen-Macaulay property of the ring modulo this ideal. In  \cite{SVV1994} Rees algebras are studied with the aid of (and in parallel to) combinatorial properties of graphs naturally associated to the rings in question, and in \cite{V1995} edge ideals were developed and explored. A couple of more recent works (among many) that deal with edge ideals are \cite{HMR2022} and \cite{HKMT2021}. In short, combinatorial methods in commutative algebra, and in particular the interplay between graph theory and commutative algebra, has borne much fruit in its relatively short history and continues to be a very fertile area of research.

In this work, we adopt a slightly different approach; the motivation is to gain insight into the ideal structure of a commutative ring by assigning a graph to the set of ideals (sometimes with natural restrictions or slight variants). Three types of graphs are developed: the first types measure adjacency or ``closeness" of the ideals in $R$, the second types mimic the structure of the ideal class group and the zero divisor graphs of Anderson/Livingston, and the third types measure finite containment of ideals. 

In the third section, the adjacency graphs are given and strong connections are given with respect to the connectedness of the graphs. In particular, a complete classification is given for connectedness for all graphs and strong bounds are given for the diameters.

In the fourth section, graphs are defined that emulate the behavior of the class group of $R$, where $R$ is an integral domain. The connected/complete case is resolved and in the case that $R$ is a Dedekind domain, the connected components of the graph are studied.

In the fifth section, a graph generalizing the classical zero-divisor graph is introduced and (among other things), we capture Theorem \ref{mrb} as a corollary.

In the sixth and final section, we explore the notion of ``finite containment" graphs. As it turns out, these graphs are almost always connected (the exception being the case where $R$ is quasilocal with maximal ideal $\mathfrak{M}$ that is not finitely generated). Here the study of the diameter turns out to be a fruitful line of attack and the behavior of diameter with regard to standard extensions (polynomial and power series) as well as homomorphic images are explored. Throughout all sections, a number of examples designed to illuminate are presented.

As a final note, we introduce some notation to be used. If $I$ and $J$ are vertices of a graph, we will use the notation $I-J$ to indicate an edge between $I$ and $J$. The notation $I\leftrightarrow J$ will signify that either $I=J$ or that there is an edge between $I$ and $J$. This notation will prove convenient in the sequel.

\section{Ideal Theoretic Graphs}

In this section we define and justify some types of graphs that are determined by
ideal theoretic properties. We wish for our graphs to be simple (no loops or multiple edges), and so in the sequel when we say  that there is an edge between the vertices $u$ and $v$, we default to the condition that $u$ and $v$ are distinct. 

In general we consider, $X$, a collection of ideals of a
commutative ring, $R$, as our collection of vertices (in many cases $X$ will consist of all ideals or all nonzero ideals of
$R$). To determine an edge set we let $P(-,-)$ be a statement and
we say that there is an edge between distinct $I$ and $J$ if and only if $P(I,J)$ is true. 

We now present three definitions that explore variants of the types of ideal graphs that we investigate. The first set of definitions measure ``closeness" of the ideals of $R$ in a certain sense; they are designed to measure/highlight the density of the ideal structure of $R$.

\begin{defn}[Adjacency Graphs]
Let $R$ be a commutative ring with identity. We define the following graphs associated to $R$.
In all cases, the vertex set is the set of proper ideals of $R$, and if the ideals $I$ and $J$ have an edge between them, then $I\neq J$.
\begin{enumerate}[start=0]
\item In $GA_0(R)$, we say that $I$ and $J$ have an edge between them if and only if $I$ and $J$
are adjacent ideals.
\item In $GA_1(R)$, we say that $I$ and $J$ have an edge between them if and only if there is a
maximal ideal $\mathfrak{M}$ such that $I=J\mathfrak{M}$.
\item In $GA_2(R)$, we say that $I$ and $J$ have an edge between them if and only if
$(I:_RJ)=\mathfrak{M}$ is a maximal ideal.
\end{enumerate}
\end{defn}

The next definitions are designed to reflect the notions of ideal multiplication and class structure. The graphs $GCl_{int}(R)$ and $GCl(R)$ are designed to graphically represent variants of the ideal class group. $GZ(R)$ is an ideal-theoretic analogue of the zero-divisor graph of Anderson-Livingston. $GZ(R)$ with the $0-$ideal removed has been considered in \cite{br2011} where the terminology ``annihilating-ideal graph" is used. We will respect this terminology, but will use the notation provided for ease and uniformity of exposition.

\begin{defn}[Ideal Multiplication/Class Structure Graphs]
Let $R$ be a commutative ring with identity. We define the following graphs associated to $R$. For these definitions, the vertex set is the set of nonzero ideals of $R$ and in the case of $GZ(R)$ we also demand that the ideals are proper. As above, if the ideals $I$ and $J$ have an edge between them, then $I\neq J$.
\begin{enumerate}
\item Let $R$ be an integral domain. For $GCl_{int}(R)$, we say that $I$ and $J$ have an edge between them if and only if there is a nonzero $a\in R$ such that $I=Ja$.
\item Let $R$ be an integral domain with quotient field $K$. In $GCl(R)$, we say that $I$ and
$J$ have an edge between them if and only if there is a nonzero $k\in K$ such that $I=Jk$.
\item In $GZ(R)$ we say that $I$ and $J$ have an edge between them if and only if
$ IJ=0$.
\end{enumerate}
\end{defn}

The last set of definitions is concerned with ideals that are finitely generated (resp. principally generated) over some initial ideal. The motivation here is to investigate possible paths between ideals in a commutative ring where the steps can be done ``one generator (or a finite number of generators) at a time." Paths between two vertices (ideals) $I$ and $J$ of these graphs rely heavily on the interplay between the maximal ideals containing $I$ and the maximal ideals containing $J$. As a consequence, we will see that these graphs give insight to the structure of $\text{MaxSpec}(R)=\{\mathfrak{M}\subset R\vert \mathfrak{M}\text{ is maximal}\}$.

\begin{defn}[Finite Containment Graphs]\label{fcg}
Let $R$ be a commutative ring with identity. We define the following graphs associated to $R$;
in all cases, the vertex set is the set of proper ideals of $R$.
\begin{enumerate}
\item In $GF(R)$ we say that $I$ and $J$ have an edge between them if and only if $I\subset J$
and $J/I$ is a finitely generated ideal of $R/I$.
\item In $GP(R)$ we say that $I$ and $J$ have an edge between them if and only if $I\subset J$
and $J/I$ is a principal ideal of $R/I$.
\end{enumerate}
\end{defn}

\begin{comment}
For a companion to the graph types above, we introduce slightly modified alternative graphs
which will generally be of most interest in the integral domain setting (this will be
especially relevant for $GCl_{int}(R)$ and $GCl(R)$) and most meaningful in the case of $GZ(R)$.

\begin{notation}
Let $R$ be a ring. We denote by $G^*_i(R)$, the subgraph of $G_i(R)$ with the zero ideal
removed from the vertex set.
\end{notation}

We collect some initial insights and motivations for the definitions above.

\begin{rem}
For some of the graphs (in particular $GZ(R)$) it will be convenient to restrict to certain subsets of ideals. For example, we will let the vertex set of $GZ^{\dagger}(R)$ be the set $V:=\{I\subset R\vert \exists J\subset R: IJ=0\}$. We will let the vertex set of $GZ^{\ddagger}(R)$ be $\{I\in V\vert I\text{ is principal.}\}$. In both cases two vertices, say $I$ and $J$, have an edge between them if and only if $IJ=0$.
\end{rem}
\end{comment}

\begin{rem}
For all of the graphs defined above (with the exceptions of $GA_2(R)$, $GCl(R)$, and $GZ(R)$) containment is used or implied in the definition. Hence one could attempt to glean more information by directing the graph, but we will take no notice of this possibility in the present work.
\end{rem}

\begin{rem}
We note that if, in the definition of $GCl_{int}(R)$, $a$ is assumed to be irreducible, then this has been investigated by J. Boynton and the second author \cite{BC2015}.
\end{rem}

\section{Adjacency: The Graphs $GA_0(R), GA_1{R},\text{and }GA_2(R)$}

In this section, we look at the graphs that mirror adjacency; as we will see the concepts of adjacency, maximal conductor, and maximal multiple are intimately related. The metrics here, in a certain sense, strive to measure how ``tightly packed" the ideals of $R$ are. Additionally, there is a certain measure of discreteness being measured when considering adjacency conditions.  

We begin by developing preliminary lemmata that will be essential later. We first note the relationship between the defining concepts of $GA_1(R)$ and $GA_2(R)$.

\begin{prop}
Let $I$ and $J$ be ideals of $R$ with $J\not\subseteq I$ and $\mathfrak{M}$ a maximal ideal. The following are equivalent.
\begin{enumerate}
\item $(I:_RJ)=\mathfrak{M}$.
\item $J\mathfrak{M}\subseteq I$.
\end{enumerate}
\end{prop}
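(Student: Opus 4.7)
The plan is to verify the two implications directly from the definition of the colon ideal $(I:_R J) = \{r \in R : rJ \subseteq I\}$.

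For the forward direction, assuming $(I:_R J) = \mathfrak{M}$, every element $m \in \mathfrak{M}$ satisfies $mJ \subseteq I$ by definition, so $J\mathfrak{M} \subseteq I$ follows immediately by summing finite products.

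For the reverse direction, assuming $J\mathfrak{M} \subseteq I$, I would note that this says $\mathfrak{M} \subseteq (I:_R J)$. Since $\mathfrak{M}$ is maximal, the ideal $(I:_R J)$ is either $\mathfrak{M}$ itself or all of $R$. The latter would mean $1 \in (I:_R J)$, i.e., $J \subseteq I$, contradicting the standing hypothesis $J \not\subseteq I$. Hence $(I:_R J) = \mathfrak{M}$.

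There is no real obstacle here; the content of the proposition is essentially the universal property of the colon ideal together with the observation that the hypothesis $J \not\subseteq I$ is exactly what forces $(I:_R J)$ to be a proper ideal and hence equal to $\mathfrak{M}$ rather than $R$. I would make a point in the write-up to highlight the role of $J \not\subseteq I$, since it is the only nontrivial input and will likely reappear as a convention in the comparisons between $GA_1(R)$ and $GA_2(R)$ that follow.
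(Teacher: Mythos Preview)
Your proof is correct and follows essentially the same approach as the paper: both directions are handled directly from the definition of the colon ideal, with the maximality of $\mathfrak{M}$ and the hypothesis $J\not\subseteq I$ used exactly as you describe to rule out $(I:_RJ)=R$ in the reverse implication.
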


\begin{proof}
Assuming condition (1), if $x\in\mathfrak{M}$ then $\ xJ\subseteq I$ and hence $\mathfrak{M}J\subseteq I$. 

On the other hand, if $J\mathfrak{M}\subseteq I$ then for all $x\in\mathfrak{M},\ Jx\subseteq I$ and hence $x\in(I:_RJ)$. Since we now have that $\mathfrak{M}\subseteq (I:_RJ)$ and that $\mathfrak{M}$ is maximal and $J\not\subseteq I$, then $\mathfrak{M}=(I:_RJ)$.
\end{proof}

\begin{lem}\label{colon}
Let $I, J\subseteq R$ be ideals with $J\not\subseteq I$. If $I=J\mathfrak{M}$ where $\mathfrak{M}$ is a maximal ideal then $(I:_RJ)=\mathfrak{M}$, but
not necessarily conversely. Hence $GA_1(R)$ is a subgraph of $GA_2(R)$.
\end{lem}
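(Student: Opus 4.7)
The statement has three parts: (i) the forward implication $I=J\mathfrak{M} \Rightarrow (I:_R J)=\mathfrak{M}$, (ii) a counterexample showing the converse fails, and (iii) the conclusion that $GA_1(R)$ sits inside $GA_2(R)$ as a subgraph. My plan is to lean entirely on the preceding proposition for part (i), produce a concrete example in $\mathbb{Z}$ for part (ii), and then translate the ideal-theoretic content into the graph-theoretic statement for (iii).

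For part (i), I would observe that the hypothesis $I=J\mathfrak{M}$ trivially gives $J\mathfrak{M}\subseteq I$. Together with the standing assumption $J\not\subseteq I$ and the maximality of $\mathfrak{M}$, this is exactly condition (2) of the proposition just proved, whose condition (1) then delivers $(I:_R J)=\mathfrak{M}$. So this half is immediate and requires no fresh calculation.

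For part (ii), I would exhibit an explicit $R$, $\mathfrak{M}$, $I$, $J$ with $(I:_R J)=\mathfrak{M}$ but $I\neq J\mathfrak{M}$. A very clean choice is $R=\mathbb{Z}$, $\mathfrak{M}=(2)$, $J=(3)$, and $I=(2)$. Here $J\not\subseteq I$, a quick colon-ideal computation yields $((2):_{\mathbb{Z}}(3))=(2)=\mathfrak{M}$, and yet $J\mathfrak{M}=(6)\neq(2)=I$. This single example suffices to show that the converse implication in part (i) genuinely fails. I expect this to be the only nontrivial decision point in the proof — producing a small, transparent counterexample rather than something contrived — but $\mathbb{Z}$ already does the job.

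Finally, for part (iii), I would just note the set-theoretic consequence for the two edge sets: any pair $(I,J)$ joined by an edge in $GA_1(R)$ (which by definition requires $I=J\mathfrak{M}$ for some maximal $\mathfrak{M}$, and in particular forces $J\not\subseteq I$ since $I\neq J$) is, by part (i), also joined by an edge in $GA_2(R)$; the vertex sets agree by definition, so $GA_1(R)$ is a subgraph of $GA_2(R)$. The counterexample of part (ii) then shows the inclusion is in general strict. The main "obstacle" is really only bookkeeping: making sure the asymmetric-looking condition $J\not\subseteq I$ is automatic from $I=J\mathfrak{M}$ together with the convention that edges are only drawn between distinct ideals.
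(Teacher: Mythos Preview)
Your argument is correct. Part (i) is handled exactly as in the paper (the paper writes out the two-line computation directly rather than citing the preceding proposition, but the content is identical), and your derivation of the subgraph statement in part (iii) is fine, including the observation that $I=J\mathfrak{M}$ with $I\neq J$ forces $J\not\subseteq I$.

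The only genuine difference is the counterexample. The paper uses $R=K[x,y]$ with $J=(x,y)$ and $I=(x,xy,y^2)$, noting that $(I:_RJ)=J$ while $J^2\subsetneq I$. Your $\mathbb{Z}$ example with $I=(2)$, $J=(3)$, $\mathfrak{M}=(2)$ is simpler and equally valid for this lemma. The paper's choice is not accidental, though: the same $K[x,y]$ setup is recycled in the very next lemma to show that $(I:_RJ)$ maximal does not force $I$ and $J$ to be adjacent, and your $\mathbb{Z}$ example cannot do double duty there (in a PID like $\mathbb{Z}$, a maximal colon $(I:_RJ)$ with $I\subsetneq J$ does force adjacency). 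So your example is the cleaner one-off, while the paper's buys reusability.
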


\begin{proof}
Note that $\mathfrak{M}J\subseteq I$, and hence $\mathfrak{M}\subseteq (I:_RJ)$. Since $J\not\subseteq I$, we must have equality.

To see that the converse does not hold in general, consider the domain $K[x,y]$, where $K$ is any field, and the ideals
$I=(x,xy,y^2)$ and $J=(x,y)$. Note that $(I:_RJ)=J$ but $J^2\subsetneq I$.
\end{proof}

\begin{lem}\label{adj}
If $I\subsetneq J$ are adjacent, then $(I:_RJ)=\mathfrak{M}$ is maximal, but not necessarily conversely. Hence $GA_0(R)$ is a subgraph of $GA_2(R)$.
\end{lem}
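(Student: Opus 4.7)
The plan is to realize the adjacency of $I \subsetneq J$ as a statement about the $R$-module $J/I$. By the correspondence theorem, the $R$-submodules of $J/I$ are exactly the quotients $K/I$ where $K$ is an ideal of $R$ with $I \subseteq K \subseteq J$. Adjacency of $I$ and $J$ means no such $K$ lies strictly between $I$ and $J$, so the only submodules of $J/I$ are $0$ and $J/I$ itself, and $J/I$ is nonzero since $I \subsetneq J$. Hence $J/I$ is a nonzero simple $R$-module.

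Next I would invoke the standard fact that every simple $R$-module over a commutative ring is cyclic and isomorphic to $R/\mathfrak{M}$ for some maximal ideal $\mathfrak{M}$: picking any nonzero $\bar{j} \in J/I$, the map $R \to J/I$, $r \mapsto r\bar{j}$, is surjective by simplicity, so $J/I \cong R/\ann_R(\bar{j})$, and the quotient $R/\ann_R(\bar{j})$ being simple forces $\ann_R(\bar{j})$ to be maximal. I then identify both sides of the desired equality. On one hand, $\ann_R(J/I) = \{r \in R : rJ \subseteq I\}$ is by definition $(I:_RJ)$; on the other, under the isomorphism $J/I \cong R/\mathfrak{M}$ the annihilator is $\mathfrak{M}$. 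Combining these gives $(I:_RJ) = \mathfrak{M}$, which is exactly the edge condition for $GA_2(R)$, proving the containment of $GA_0(R)$ as a subgraph of $GA_2(R)$.

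For the ``not necessarily conversely'' claim, I plan to use the standard example $R = k[x,y]$ with $\mathfrak{M} = (x,y)$, $J = \mathfrak{M}$, and $I = \mathfrak{M}^2 = (x^2, xy, y^2)$. Since $\mathfrak{M} J = \mathfrak{M}^2 = I$ we have $\mathfrak{M} \subseteq (I:_RJ)$, and since $J \not\subseteq I$ we have $(I:_RJ) \neq R$, forcing $(I:_RJ) = \mathfrak{M}$. But $J/I = \mathfrak{M}/\mathfrak{M}^2$ is two-dimensional over $R/\mathfrak{M} = k$, hence not simple; concretely, the ideal $(x) + \mathfrak{M}^2$ lies strictly between $I$ and $J$, so $I$ and $J$ are not adjacent.

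The only delicate step is the module-theoretic translation in the middle paragraph, specifically making sure that the annihilator of the cyclic simple module $J/I$ agrees with the annihilator of any chosen nonzero generator (so that the identification $\ann_R(J/I) = \mathfrak{M}$ is legitimate); this follows because for any cyclic $M = R\bar{j}$, one has $\ann_R(M) \subseteq \ann_R(\bar{j})$ trivially and $\ann_R(\bar{j}) \subseteq \ann_R(r\bar{j})$ for every $r \in R$, giving equality. Everything else is routine, so the proof is essentially a clean application of the simple-module dictionary.
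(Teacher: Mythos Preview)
Your proof is correct and the counterexample is essentially the one the paper uses (the paper writes $A=(x^2,xy,y^2)$, $J=(x,y)$, and exhibits $(x,xy,y^2)=(x)+\mathfrak{M}^2$ as the intermediate ideal), but the forward direction is argued along a genuinely different line.

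The paper gives a direct element-level argument: pick $x\in J\setminus I$ so that $J=(I,x)$, take any $z\notin(I:_RJ)$, argue $zx\notin I$ and hence $J=(I,zx)$, then solve $x=rzx+\alpha$ to get $1-rz\in(I:_RJ)$, whence $(I:_RJ)$ is maximal. Your route instead packages the adjacency hypothesis as the statement that $J/I$ is a simple $R$-module and then reads off $(I:_RJ)=\ann_R(J/I)=\mathfrak{M}$ from the standard identification of simple modules with residue fields. Your approach is cleaner and more conceptual, and makes transparent why the conductor is maximal (it is literally the annihilator of a simple module); the paper's argument is more self-contained, requiring no appeal to module-theoretic background, and in fact implicitly re-derives the cyclic/simple dictionary by hand. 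Either way the content is the same, and your care in checking that $\ann_R(J/I)$ agrees with the annihilator of a chosen generator is exactly the point that makes the identification go through.
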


\begin{proof}
Let $I\subsetneq J$ be adjacent and let $\mathcal{C}=(I:_RJ)$; we will show that $\mathcal{C}$ is
maximal. Since $I$ is strictly contained in $J$, we can find an $x\in J\setminus I$;
additionally, we note that $J=(I,x)$ because of the adjacency of $I$ and $J$.

We now choose an arbitrary $z\notin\mathcal{C}$ and note that $zx\notin I$ (indeed, if $zx\in
I$ then the fact that $J=(I,x)$ would show that $z\in\mathcal{C}$ which is a contradition).
Hence, it must be the case that $J=(I,zx)$, and since $x\in J$, we obtain

\[
x=rzx+\alpha
\]

\noindent for some $r\in R$ and $\alpha\in I$. Rearranging the above, we now have

\[
x(1-rz)=\alpha\in I.
\]

Since $(1-rz)$ conducts $x$ to $I$ and $J=(I,x)$, $(1-rz)\in\mathcal{C}$. Therefore
$(\mathcal{C},z)=R$ for all $z\notin\mathcal{C}$ and so $\mathcal{C}$ is maximal.

To see that the converse does not hold, we revisit the example of the previous result. Letting $R=K[x,y]$, $A=(x^2,xy,y^2), I=(x,xy,y^2), J=(x,y)$. Note that $(A:_RJ)=J$ but $A$ and $J$ are not adjacent as $A\subsetneq I\subsetneq J$.
\end{proof}

\begin{lem}
Suppose that $I\subsetneq J$ are adjacent and $\mathcal{C}:=(I:_RJ)$. If $\mathcal{C}\bigcap J=I$
then $J\setminus I$ is a multiplicatively closed set.
\end{lem}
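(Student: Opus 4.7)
The plan is to take two arbitrary elements $a, b \in J \setminus I$ and show their product lies in $J \setminus I$. The containment $ab \in J$ is automatic since $J$ is an ideal and $a \in J$, so the whole task is to rule out $ab \in I$.

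Suppose for contradiction that $ab \in I$. The key leverage point is the adjacency of $I$ and $J$: since $a \in J \setminus I$, the ideal $(I, a)$ lies strictly between $I$ and $J$ inclusive, so by adjacency $J = (I, a) = I + aR$. This turns a single product relation into a statement about all of $J$: computing
\[
bJ \;=\; b(I + aR) \;=\; bI + (ab)R \;\subseteq\; I + I \;=\; I,
\]
one obtains $b \in (I :_R J) = \mathcal{C}$.

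Now invoke the standing hypothesis $\mathcal{C} \cap J = I$. Since $b \in J$ by assumption and $b \in \mathcal{C}$ by the computation above, we conclude $b \in I$, contradicting $b \in J \setminus I$. Hence $ab \notin I$, and $J \setminus I$ is closed under multiplication.

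I do not foresee a genuine obstacle here: the argument is essentially a one-line computation once one observes that adjacency collapses the colon-ideal test from ``check on all of $J$'' to ``check on the single generator $a$ modulo $I$.'' The only thing to be careful about is the order of elements in the colon (using $(ab)R = a(bR)$ to identify $b$ rather than $a$ as the element being conducted into $I$), and the mild point that $bI \subseteq I$ trivially — both of which are routine.
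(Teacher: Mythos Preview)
Your proof is correct and follows essentially the same approach as the paper's: use adjacency to write $J=(I,a)$, show $bJ\subseteq I$ so that $b\in\mathcal{C}$, and then invoke $\mathcal{C}\cap J=I$ for the contradiction. The only cosmetic difference is that you compute $bJ\subseteq I$ via ideal arithmetic in one line, whereas the paper verifies it element-by-element.
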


\begin{proof}
Let $x,y\in J\setminus I$. Certainly $xy\in J$. By way of contradiction, we assume that $xy\in
I$. Since $I$ and $J$ are adjacent and $x\notin I$, $(x,I)=J$.

Now let $j\in J$ be arbitrary. By the previous remark, we can find $r\in R$ and $i\in I$ such
that $j=rx+i$. Multiplying this by $y$ we obtain that $yj=rxy+iy\in I$. Hence
$y\in\mathcal{C}\bigcap J=I$ which is the desired contradiction.
\end{proof}

\begin{lem}\label{intersection}
Let $I\subsetneq J$ be adjacent and $A\subseteq R$ an ideal. Then the ideals $I\bigcap A$ and
$J\bigcap A$ are either equal or adjacent.
\end{lem}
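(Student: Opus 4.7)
The plan is to show directly that nothing can lie strictly between $I\cap A$ and $J\cap A$, by pulling any hypothetical intermediate ideal up to an ideal between $I$ and $J$ and invoking adjacency there. Assume $I\cap A \subsetneq J\cap A$ (the alternative is the equality case of the conclusion), and suppose $K$ is an ideal with
\[
I\cap A \;\subseteq\; K \;\subseteq\; J\cap A.
\]
I will show $K$ must coincide with one of the endpoints.

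First I would form the ideal $I+K$. Since $K\subseteq J\cap A\subseteq J$ and $I\subseteq J$, we obtain $I\subseteq I+K\subseteq J$. The adjacency of $I$ and $J$ (Lemma on adjacency, i.e., the hypothesis) then forces $I+K=I$ or $I+K=J$. In the first case, $K\subseteq I$; but also $K\subseteq A$ because $K\subseteq J\cap A$, so $K\subseteq I\cap A$, giving $K=I\cap A$ together with the reverse inclusion.

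For the second case $I+K=J$, I would argue that $K=J\cap A$ by showing $J\cap A\subseteq K$. Pick $j\in J\cap A$; writing $j=i+k$ with $i\in I$ and $k\in K$, I observe that $i=j-k\in A$, because $j\in A$ and $k\in K\subseteq A$. Hence $i\in I\cap A\subseteq K$, so $j=i+k\in K$. This completes the equality $K=J\cap A$, so no intermediate ideal exists, and $I\cap A$ and $J\cap A$ are adjacent.

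The key idea — and essentially the only nontrivial step — is to lift a candidate intermediate $K$ to $I+K$ and exploit adjacency upstairs; the main obstacle, such as it is, is simply noticing that the containment $K\subseteq A$ is exactly what is needed to push a decomposition $j=i+k$ back down into $I\cap A$. No other hypotheses on $A$ are required, and the proof makes no use of finiteness, maximality of any colon ideal, or the earlier lemmas beyond the definition of adjacency itself.
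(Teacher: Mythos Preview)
Your proof is correct. The approach differs from the paper's, though the underlying computation is closely related. The paper argues via the single-generator characterization of adjacency: it fixes an element $x\in (J\cap A)\setminus(I\cap A)$, notes that $J=(I,x)$, and then for an arbitrary $j\in J\cap A$ writes $j=rx+\alpha$ and observes that $rx\in A$ (since $x\in A$), whence $\alpha\in I\cap A$, giving $J\cap A=(I\cap A,x)$. You instead use the no-intermediate-ideal characterization, lifting a candidate $K$ to $I+K$ and applying adjacency upstairs. Your key step --- decomposing $j=i+k$ and noting $i=j-k\in A$ --- is the mirror image of the paper's observation that $\alpha=j-rx\in A$. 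What your framing buys is that it makes the lattice-theoretic content explicit (the map $K\mapsto I+K$ from the interval $[I\cap A,\,J\cap A]$ into $[I,J]$); what the paper's buys is a slightly more concrete conclusion, namely that \emph{any} element of $(J\cap A)\setminus(I\cap A)$ generates the larger over the smaller.
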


\begin{proof}
We will assume that $I\bigcap A$ and $J\bigcap A$ are distinct and suppose that $x\in (J\bigcap
A)\setminus (I\bigcap A)$. Since $x\notin I$, it must be the case that $J=(I,x)$.

Let $j\in J\bigcap A$ be arbitrary. Since $J=(I,x)$, we have that 

\[
j=rx+\alpha
\]

\noindent for some $r\in R$ and $\alpha\in I$. We observe further that $rx\in J\bigcap A$, and
hence, $\alpha\in I\bigcap A$. We conclude that

\[
J\bigcap A=(I\bigcap A, x)
\]

\noindent for any $x\in (J\bigcap A)\setminus (I\bigcap A)$, and so $I\bigcap A$ and $J\bigcap
A$ must be adjacent.
\end{proof}

\begin{lem}\label{adjunction}
If $I\subsetneq J$ are adjacent ideals and $x\in R$, then the ideals $(I,x)$ and $(J,x)$ are
either equal or adjacent.
\end{lem}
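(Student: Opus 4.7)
The goal is to argue directly from the adjacency definition: two ideals are adjacent when one strictly contains the other and no ideal fits strictly between them. Since $I \subseteq J$ we automatically have $(I,x) \subseteq (J,x)$, so the only case to handle is $(I,x) \subsetneq (J,x)$, and in that case we must produce adjacency.

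The plan is to pick an arbitrary $y \in (J,x) \setminus (I,x)$ and prove $(I,x,y) = (J,x)$. This shows that adjoining any element of $(J,x)$ outside $(I,x)$ immediately fills the gap, which is precisely the criterion for $(I,x)$ and $(J,x)$ to be adjacent. This mirrors the style already used in Lemma \ref{intersection}, so the proof will read in a parallel fashion.

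To carry this out, write $y = j + rx$ with $j \in J$ and $r \in R$, which is possible because $y \in (J,x)$. The crucial observation is that $j \notin I$: otherwise $y = j + rx$ would already lie in $(I,x)$, contradicting the choice of $y$. So $j \in J \setminus I$, and now the adjacency of $I$ and $J$ forces $(I,j) = J$, since $(I,j)$ is an ideal strictly containing $I$ and contained in $J$. Finally, $j = y - rx \in (I,x,y)$, so $(I,x,y)$ contains $(I,j) = J$ together with $x$, giving $(I,x,y) \supseteq (J,x)$; the reverse inclusion is immediate.

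There is no serious obstacle here; the only subtlety — and the step most likely to be mishandled — is extracting $j \notin I$ from the decomposition $y = j + rx$ before applying the adjacency hypothesis. A naive attempt to use the adjacency of $I$ and $J$ via the element $y$ directly fails, since $y$ need not lie in $J$ at all; the adjacency has to be triggered by the component $j$.
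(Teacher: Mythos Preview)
Your proof is correct and follows essentially the same approach as the paper's own argument: decompose an element outside $(I,x)$ as $j+rx$, observe $j\in J\setminus I$, invoke adjacency of $I$ and $J$ to get $(I,j)=J$, and conclude. The only cosmetic difference is that the paper phrases it via an intermediate ideal $A$ with $(I,x)\subsetneq A\subseteq (J,x)$ and shows $A=(J,x)$, whereas you show directly that $(I,x,y)=(J,x)$ for any $y$ outside $(I,x)$; these are equivalent formulations of adjacency, and your version is in fact more explicit about the justification that $j\notin I$.
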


\begin{proof}
Suppose that $A$ is an ideal with $(I,x)\subsetneq A\subseteq (J,x)$ and let $a\in A\setminus
(I,x)$. We write $a=j+rx$ with $j\in J\setminus I$. Since $I$ and $J$ are adjacent and $j\notin
I$, $(I,j)=J$. As $A$ contains $x$, $A$ contains all of $J$; we conclude that $A=(J,x)$ and the proof is
complete.
\end{proof}

\begin{comment}
{\color{red} How does localization/quotients act on graphs and say something about girth careful with localizations}
\end{comment}

\begin{prop}\label{local}
Let $I\subsetneq J$ be adjacent ideals and $S\subset R$ be a multiplicatively closed set (not
containing $0$). Then the ideals $I_S\subseteq J_S$ are either adjacent or equal.
\end{prop}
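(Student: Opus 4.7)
The plan is to show directly that if $I_S \neq J_S$, then for every $y \in J_S \setminus I_S$ one has $(I_S, y) = J_S$, which is exactly the statement that $I_S$ and $J_S$ are adjacent (any ideal of $R_S$ strictly containing $I_S$ must then equal $J_S$). So we may assume $I_S \subsetneq J_S$, fix an arbitrary $y \in J_S \setminus I_S$, and aim to show $J_S \subseteq (I_S, y)$ (the reverse containment being automatic).

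The key step is choosing the right representative. Since $y \in J_S$, I can write $y = j/t$ for some $j \in J$ and $t \in S$ (this is crucial: I use that $y$ already lies in $J_S$, rather than picking an arbitrary fraction $a/s$ with $a \in R$). Then I claim $j \notin I$: otherwise $y = j/t$ would by definition belong to $I_S$, contrary to the choice of $y$. Now the adjacency of $I \subsetneq J$ in $R$ forces $(I, j) = J$, because $(I, j)$ is an ideal strictly containing $I$ and contained in $J$. Extending this equation to $R_S$ yields $J_S = (I_S, j/1)$, and since $j/1 = t \cdot y \in (I_S, y)$, we conclude $J_S = (I_S, j/1) \subseteq (I_S, y)$, as desired.

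The only mild subtlety, and the point that could trip up a naive attempt, is the handling of representatives in the localization. A general element of $J_S$ can be written in many ways, and not every numerator will lie in $J \setminus I$; one must exploit the membership $y \in J_S$ to produce a representative with numerator in $J$, and then exploit $y \notin I_S$ to force that numerator out of $I$. Once the correct representative is in hand, the rest of the argument is a one-line application of the adjacency hypothesis and the fact that localization is compatible with finite sums and principal generation.
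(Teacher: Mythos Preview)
Your proof is correct and follows essentially the same approach as the paper's: both arguments pick an element of $J_S\setminus I_S$, choose a representative with numerator in $J\setminus I$, invoke adjacency in $R$ to get $J=(I,j)$, and then localize. Your version is slightly more streamlined, since you directly show $(I_S,y)=J_S$ for arbitrary $y$, whereas the paper first fixes a generator $x\in J\setminus I$, shows $J_S=(I_S,x/1)$, and then observes that this works for any such $x$; but the substance is the same.
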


\begin{proof}
Since $I\subsetneq J$ are adjacent, there is an $x\in J\setminus I$ such that $J=(I,x)$ (in
fact, {\it any} $x\in J\setminus I$ will do). Suppose that $I_S\subsetneq J_S$ and let
$\frac{b}{s}\in J_S\setminus I_S$. We can assume that $b\in J\setminus I$ and since
$I\subsetneq J$ are adjacent, we can find an $r\in R$ and $\alpha\in I$ such that
$b=rx+\alpha$. In $R_S$ we have the equation

\[
\frac{b}{s}=(\frac{r}{s})(\frac{x}{1})+\frac{\alpha}{s}
\]

\noindent which shows that $J_S$ is generated over $I_S$ by any element of the form $\frac{x}{1}$ with $x\in J\setminus I$. Hence $I_S$ and $J_S$ are either equal or adjacent.
\end{proof}

The next result is a consequence of the Lattice Isomorphism Theorem and the one after is straightforward; we omit the proofs.

\begin{prop}\label{mod}
Let $A,B\subseteq R$ be ideals containing the ideal $I$. Then $A$ and $B$ are adjacent in $R$
if and only if $A/I$ and $B/I$ are adjacent in $R/I$.
\end{prop}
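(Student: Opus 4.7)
The proof is a direct application of the Lattice Isomorphism (Correspondence) Theorem. The plan is to fix the canonical surjection $\pi\colon R\to R/I$ and invoke the inclusion-preserving bijection $A \leftrightarrow A/I$ between ideals of $R$ containing $I$ and ideals of $R/I$, whose inverse takes an ideal $\bar{C}$ of $R/I$ to its preimage $\pi^{-1}(\bar{C})$.

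After unwinding the definition, adjacency of $A$ and $B$ (say with $A\subsetneq B$) amounts to two conditions: (i) $A\subsetneq B$, and (ii) no ideal $C$ of $R$ satisfies $A\subsetneq C\subsetneq B$. I would transfer each condition along the correspondence. Condition (i) is immediate since the bijection preserves strict inclusion: $A\subsetneq B$ if and only if $A/I\subsetneq B/I$. For condition (ii), any intermediate ideal $C$ with $A\subsetneq C\subsetneq B$ automatically contains $I$ (since $I\subseteq A\subseteq C$), hence corresponds to an ideal $C/I$ of $R/I$ strictly between $A/I$ and $B/I$. Conversely, every ideal of $R/I$ lying strictly between $A/I$ and $B/I$ has the form $\bar{C}$ for some ideal $\bar{C}$, and its preimage $\pi^{-1}(\bar{C})$ is an ideal of $R$ containing $I$ that sits strictly between $A$ and $B$. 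Thus the non-existence of intermediate ideals transfers in both directions.

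No part of this argument presents a real obstacle; the result is essentially bookkeeping on top of the Correspondence Theorem, which is exactly how the authors describe it in the statement preceding the proposition. The only caveat worth flagging is the graph-theoretic convention that an edge requires distinct vertices, and this is compatible with the bijection since $A = B$ if and only if $A/I = B/I$.
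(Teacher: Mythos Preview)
Your proposal is correct and matches the paper's approach exactly: the paper states that this proposition is a consequence of the Lattice Isomorphism Theorem and omits the proof, and your argument is precisely the standard unwinding of that theorem.
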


\begin{comment}
\begin{proof}
Suppose $A\subsetneq B$ both contain the ideal $I$ and are adjacent. If there is an ideal of
$R/I$ strictly between $A/I$ and $B/I$. This ideal can be written in the form $C/I$ where $C$
is an ideal strictly between $A$ and $B$, which is a contradiciton.

Now we suppose that $A$ and $B$ contain $I$ with $A/I$ and $B/I$ adjacent. If there is an ideal
(say $C$) strictly between $A$ and $B$, then we have the containment

\[
A/I\subseteq C/I\subseteq B/I.
\]

To see that the first containment is, in fact, strict, we suppose that $A/I=C/I$. Hence, given
any $c\in C$, we can write $c=a+z$ for some $a\in A,\ z\in I$. But since $I\subseteq A$, we
obtain that $c\in A$ and so $C=A$ which is a contradiction. The strictness of the second
containment is shown similarly.
\end{proof}
\end{comment}

\begin{lem}\label{fg}
Let $R$ be a ring and $I\subseteq J$ be ideals. If $I$ is finitely generated and $J/I\subseteq
R/I$ is finitely generated, then $J$ is finitely generated.
\end{lem}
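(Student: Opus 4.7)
The plan is a direct generator-lifting argument; there is no real obstacle here, just bookkeeping. The statement is the standard ``extension of finitely generated by finitely generated is finitely generated'' fact, phrased in ideal-theoretic language.

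First I would fix finite generating sets. Write $I=(i_1,\dots,i_m)$ using the hypothesis that $I$ is finitely generated. Next, since $J/I$ is a finitely generated ideal of $R/I$, choose elements $\bar j_1,\dots,\bar j_n\in J/I$ that generate it, and lift them to $j_1,\dots,j_n\in J$. The claim I would then prove is that
\[
J=(i_1,\dots,i_m,j_1,\dots,j_n),
\]
which gives the conclusion.

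To verify the claim, take an arbitrary $j\in J$ and push it to the quotient. Its image $\bar j\in J/I$ can be written as $\bar j=\sum_{k=1}^{n}\bar r_k\bar j_k$ for some $r_k\in R$. Lifting back, $j-\sum_{k=1}^{n}r_k j_k$ lies in $I$, hence equals $\sum_{\ell=1}^{m}s_\ell i_\ell$ for some $s_\ell\in R$. Therefore
\[
j=\sum_{k=1}^{n}r_k j_k+\sum_{\ell=1}^{m}s_\ell i_\ell,
\]
which exhibits $j$ as an $R$-linear combination of the proposed generators. This establishes the claim, and hence the lemma. The one subtle point, if any, is simply to remember that the $\bar r_k$ must be lifted to elements of $R$ (not merely of $R/I$) before combining with the $i_\ell$; once the lift is taken the argument is immediate.
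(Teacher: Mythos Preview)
Your argument is correct. The paper actually omits the proof of this lemma entirely, remarking only that it is straightforward; your direct generator-lifting argument is exactly the elementary justification one would expect, and there is nothing to add.
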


\begin{comment}
\begin{proof}
Consider the short exact sequence

\begin{center}
$\xymatrix@1{0\ar[r]&I\ar[r]^f&J\ar[r]^g&J/I\ar[r]&0}$.
\end{center}
It is well known that if $I$ and $J/I$ are Noetherian $R-$modules then so is $J$.
\end{proof}
\end{comment}

\begin{prop}\label{na}
Let $R$ be a $1-$dimensional domain. $R$ is Noetherian if and only if $R/I$ is Artinian for
each ideal $0\neq I\subseteq R$.
\end{prop}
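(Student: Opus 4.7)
The plan is a two-directional argument, each relying on a classical equivalence between Artinian and Noetherian properties for zero-dimensional rings, together with the previous lemma on finite generation in short exact sequences.

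For the forward direction, assume $R$ is a Noetherian $1$-dimensional domain and let $0 \neq I \subseteq R$. I would first observe that $R/I$ is Noetherian (as a quotient of a Noetherian ring) and that every prime ideal of $R/I$ pulls back to a prime of $R$ containing $I$. Since $I \neq 0$ and $R$ is a domain of Krull dimension one, any such prime must be maximal, so $\dim(R/I) = 0$. A Noetherian ring of Krull dimension zero is Artinian by the standard classification, which gives the desired conclusion.

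For the reverse direction, assume $R/I$ is Artinian for every nonzero ideal $I$ of $R$, and let $J$ be an arbitrary ideal of $R$; we must show $J$ is finitely generated. The case $J = 0$ is trivial, so choose some nonzero $a \in J$. Then $(a)$ is principal, hence finitely generated, and $(a) \subseteq J$. By hypothesis $R/(a)$ is Artinian, and since for commutative rings with identity Artinian implies Noetherian, the ideal $J/(a)$ of $R/(a)$ is finitely generated. Now applying Lemma \ref{fg} to the chain $(a) \subseteq J$ with $(a)$ finitely generated and $J/(a)$ finitely generated over $R/(a)$, we conclude $J$ is finitely generated. As $J$ was arbitrary, $R$ is Noetherian.

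The only nonroutine ingredients are the two classical facts: Noetherian plus Krull dimension zero implies Artinian (forward direction), and Artinian commutative rings are Noetherian (reverse direction). Neither should be a real obstacle since both are standard textbook results, and the main content of the argument is simply the dimension computation $\dim(R/I)=0$ in the forward direction and the reduction via a principal subideal $(a) \subseteq J$ in the reverse direction, where Lemma \ref{fg} supplies precisely the glue needed.
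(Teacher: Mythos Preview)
Your proof is correct and follows essentially the same route as the paper: for the forward direction you pass to the Noetherian zero-dimensional quotient, and for the reverse you pick a nonzero principal subideal $(a)\subseteq J$, use that $R/(a)$ is Artinian (hence Noetherian) to get $J/(a)$ finitely generated, and invoke Lemma \ref{fg}. The paper's argument is identical in structure, just slightly terser.
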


\begin{proof}
Suppose first that $R$ is Noetherian. If $I\subseteq R$ is a nonzero ideal, then $R/I$ is
Noetherian of dimension $0$ and hence is Artinian.

Now suppose that $R/I$ is Artinian for each nonzero ideal $I\subset R$. It suffices to show
that every ideal of $R$ is finitely generated. Let $J\subset R$ be an arbitrary nonzero ideal.
Let $0\neq x\in J$ and note that by hypothesis, $R/(x)$ is Artinian and so $J/(x)$ is finitely generated. From Lemma \ref{fg} it
follows that $J$ is finitely generated and hence $R$ is Noetherian.
\end{proof}

We now apply these preliminary results to the ideal graphs in question. We begin with the following result that records the stability of $G_i(R)$ for $0\leq i\leq 2$ with respect to localization and homomorphic images.

\begin{thm}
Let $I\subset R$ be an ideal and $S\subset R$ be a multiplicatively closed set. If $GA_i(R)$ is
connected or complete for $0\leq i\leq 2$, then so are $GA_i(R_S)$ and $GA_i(R/I)$.
\end{thm}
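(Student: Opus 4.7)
The plan is to handle each of the six subclaims (three graphs $\times$ two operations) via a uniform two-step argument: \emph{edge transfer} and, for connectedness, \emph{path repair}. The edge transfer step asserts that an edge in $GA_i(R)$ between two ideals satisfying the operation's constraint (both containing $I$ for the quotient $R/I$, or both $S$-saturated for the localization $R_S$) induces an edge between the corresponding ideals in the target graph. Completeness follows from edge transfer alone, since completeness of $GA_i(R)$ guarantees an edge between every pair of distinct proper ideals of $R$, and in particular between every pair of distinct constrained ideals, which then transfers.

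Edge transfer I would verify graph by graph. For $GA_0$ it is Proposition \ref{mod} in the quotient case and Proposition \ref{local} combined with the distinctness of $S$-saturated ideals in the localization case. For $GA_1$, if $A = B\mathfrak{M}$ with $A, B \supseteq I$, then $\mathfrak{M}\supseteq A\supseteq I$, so $\mathfrak{M}/I$ is maximal in $R/I$ and a direct computation gives $(B/I)(\mathfrak{M}/I) = A/I$; for localization, distinct $S$-saturated $A, B$ with $A = B\mathfrak{M}$ force $\mathfrak{M}\cap S = \emptyset$ (otherwise $A_S = B_S$ since the extension of $\mathfrak{M}$ is the full ring), so $\mathfrak{M}_S$ is maximal in $R_S$. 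For $GA_2$ I would use the identity $(A/I :_{R/I} B/I) = (A :_R B)/I$ valid when $A, B\supseteq I$, and for localization note that $\mathfrak{M}_S\subseteq (A_S :_{R_S} B_S)$ with the right-hand side proper (since $B\not\subseteq A$ and both are $S$-saturated, $B_S\not\subseteq A_S$), forcing equality with the maximal $\mathfrak{M}_S$.

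For path repair in the quotient case I would take a path $A = A_0 - A_1 - \cdots - A_n = B$ in $GA_i(R)$ with $A, B\supseteq I$ and replace each $A_k$ by $A_k + I$. For $GA_0$ this is the natural generalization of Lemma \ref{adjunction} with the single element $x$ replaced by an entire ideal $I$ (the proof carries over verbatim). For $GA_1$ and $GA_2$ I would case split on whether the distinguished maximal ideal $\mathfrak{M}$ of the edge contains $I$: when $I\subseteq\mathfrak{M}$, the edge survives modulo $I$ (by a direct product or colon computation); when $I\not\subseteq\mathfrak{M}$, one has $\mathfrak{M}+I=R$, which quickly forces $A_k + I = A_{k+1} + I$, i.e., the step collapses. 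Either way the image in $R/I$ forms a walk that produces the required path after collapsing repeated vertices.

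For path repair in the localization case for $GA_0$ I would use Lemma \ref{intersection}: given a path in $GA_0(R)$ between $S$-saturated $A, B$, intersect each step with $A$ to obtain a path from $A$ to $A\cap B$ consisting of ideals inside $A$ (hence $S$-disjoint), then concatenate with the analogous path from $A\cap B$ to $B$ inside $B$, and apply Proposition \ref{local}. The main obstacle I foresee is the localization case for $GA_1$ and $GA_2$, where Lemma \ref{intersection} has no clean analog—the relations $A = B\mathfrak{M}$ and $(A :_R B) = \mathfrak{M}$ are not preserved by intersection with a third ideal. For $GA_2$ the argument does salvage: if $(A_k :_R A_{k+1}) = \mathfrak{M}$, then $r(A_{k+1}\cap C)\subseteq (rA_{k+1})\cap C\subseteq A_k\cap C$ for every $r\in\mathfrak{M}$, so $\mathfrak{M}\subseteq (A_k\cap C :_R A_{k+1}\cap C)$; hence either the intersected ideals are comparable (a collapse) or the colon equals the maximal $\mathfrak{M}$ (an edge). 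The $GA_1$ case is the most delicate and would likely require a separately tailored construction exploiting the multiplicative form of the edge rather than a direct intersection trick.
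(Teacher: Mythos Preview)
Your treatment of the quotient case is sound and actually more careful than the paper's own argument, which only verifies edge transfer for ideals already containing $I$ and does not spell out how to handle intermediate vertices of a path. Your ``add $I$'' repair, together with the case split on whether $I\subseteq\mathfrak{M}$, is exactly what is needed to fill that gap.

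Where you diverge substantively is the localization case. The paper does not attempt to repair the path to consist of $S$-saturated ideals; it simply applies the localization functor to each step. If $A=B\mathfrak{M}$ then $A_S=B_S\mathfrak{M}_S$, and since $\mathfrak{M}_S$ is either $R_S$ or maximal, the localized pair is either equal or joined by an edge; the analogous observation holds for $GA_0$ (Proposition~\ref{local}) and $GA_2$. Your intersection maneuver is therefore an unnecessary detour, and it does not fully succeed: in your $GA_2$ argument the case $(A_k\cap C:_R A_{k+1}\cap C)=R$ only yields $A_{k+1}\cap C\subseteq A_k\cap C$, not equality, so the walk can break; and for $GA_1$ you concede no construction at all.

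That said, the paper's direct argument shares a subtle lacuna with yours: an intermediate vertex $A_k$ of the path in $R$ may meet $S$, so that $(A_k)_S=R_S$ is not a vertex of $GA_i(R_S)$. The paper does not address this, but immediately after the proof remarks that the whole theorem is an immediate consequence of Theorems~\ref{art} and~\ref{conn}. Those results characterize connectedness of $GA_0$ as ``$R$ is Artinian'' and of $GA_1,GA_2$ as ``some finite product of maximal ideals is zero,'' both of which are manifestly stable under localization and quotient. That is the clean route, and it dissolves the $GA_1$ difficulty you flagged.
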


\begin{proof}
The case for $GA_0(-)$ follows immediately from Proposition \ref{local} and Proposition
\ref{mod}.

For $GA_1(-)$ we suppose that $A, B\subset R$ are distinct and have an edge between them. Then $A=B\mathfrak{M}$, with $\mathfrak{M}$ maximal. Note that if $\mathfrak{M}\bigcap S\neq\emptyset$ then $\mathfrak{M}_S=R_S$ and if not then $\mathfrak{M}_S\subset R_S$ is maximal. As $A_S=B_S\mathfrak{M}_S$, we have that $A_S$ and $B_S$ are either equal or have an edge between them. Since all ideals of $R_S$ are of this form, if $GA_1(R)$ is either connected or complete, then so if $GA_1(R_S)$. Additionally, if $I\subseteq A,B$ then $A/I=(B/I)(\mathfrak{M}/I)$ and from this the result follows for $GA_1(R/I)$.

In a similar fashion, if we consider $GA_2(-)$ with the ideals/notation in the previous paragraph, we assume that $(A:_RB)=\mathfrak{M}$. In this case, it is easy to verify that $(A/I:_RB/I)=\mathfrak{M}/I$ and $\mathfrak{M}_S\subseteq(A_S:_RB_S)$. Hence, again, the result follows for $GA_2(-)$.
\end{proof}

We remark that the previous result will be an immediate consequence of later structure theorems concerning $GA_i(R)$ (see Theorem \ref{art} and Theorem \ref{conn}).

The following result is of some independent interest and will be crucial in later work where we show the connection between $GA_0(R)$ and the Artinian condition. The result shows that if $I\subseteq J$ are ideals and if there is a finite chain of adjacent ideals connecting $I$ and $J$, this chain can be refined to a finite increasing chain.

\newpage

\begin{prop}\label{chainrefine}
Let $I\subseteq J$ be ideals and $\{I_n\}_{n=0}^N$ ideals with $I=I_0$, $J=I_N$, and $I_k$ and
$I_{k+1}$ adjacent for each $0\leq k\leq N-1$. Then there exists an increasing chain of ideals
\[
I=J_0\subseteq J_1\subseteq\cdots\subseteq J_M=J
\]

\noindent with each successive pair of ideals adjacent and $M\leq N$.
\end{prop}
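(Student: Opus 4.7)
The plan is to straighten the zigzag chain $(I_n)$ into a monotone one by first intersecting with $J$ (to bring everything below $J$) and then accumulating partial sums (to force monotonicity). Concretely, I would set
\[
J_k \;:=\; \sum_{j=0}^{k}\,(I_j\cap J)\qquad (0\leq k\leq N).
\]
The endpoints automatically work out: $J_0 = I_0\cap J = I\cap J = I$ (since $I\subseteq J$), and $J_N = J$, because $I_N\cap J = J$ forces $J_N\supseteq J$ while every summand sits inside $J$. By construction, $J_k\subseteq J_{k+1}$, so once I know that each consecutive pair is equal or adjacent, deleting repetitions from $J_0\subseteq J_1\subseteq\cdots\subseteq J_N$ produces a strictly increasing chain of adjacent ideals from $I$ to $J$ of length $M\leq N$ (at most $N$ strict inclusions can appear in $N$ steps).

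For the consecutive adjacency step, I first invoke Lemma \ref{intersection} on $I_k, I_{k+1}$ to see that $I_k\cap J$ and $I_{k+1}\cap J$ are either equal or adjacent. If $I_{k+1}\cap J\subseteq I_k\cap J$, then $I_{k+1}\cap J\subseteq J_k$, so $J_{k+1}=J_k$. The only real case to analyse is when $I_k\cap J\subsetneq I_{k+1}\cap J$ is adjacent. In that case I would appeal to the following ideal-valued strengthening of Lemma \ref{adjunction}:

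\emph{If $A\subsetneq B$ are adjacent and $C$ is any ideal of $R$, then $A+C$ and $B+C$ are equal or adjacent.}

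Applied with $A = I_k\cap J$, $B = I_{k+1}\cap J$, $C = J_k$ (noting $A\subseteq J_k$, so $A+C = J_k$ and $B+C = J_{k+1}$), this yields that $J_k$ and $J_{k+1}$ are equal or adjacent, as desired.

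The only mild obstacle is proving the stated ideal-valued strengthening of Lemma \ref{adjunction}, which I would dispatch by rerunning that lemma's argument almost verbatim: suppose $A+C\subsetneq D\subseteq B+C$ and pick $d\in D\setminus(A+C)$; writing $d=b+c$ with $b\in B$ and $c\in C$, we get $b=d-c\in D$ (since $C\subseteq A+C\subseteq D$) and $b\notin A$ (else $d\in A+C$); adjacency of $A\subsetneq B$ then gives $B=(A,b)\subseteq D$, whence $B+C\subseteq D$ and thus $D=B+C$. With that in hand, the construction above proves the proposition cleanly. Conceptually, intersection with $J$ controls things from above, and the partial-sum accumulation converts the resulting (possibly zigzagging) chain inside $[0,J]$ into a monotone one terminating at $J$, while the two ideal-level lemmas ensure that no adjacency gets broken along the way.
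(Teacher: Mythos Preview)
Your proof is correct and takes a genuinely different route from the paper's. Both arguments begin by intersecting with $J$ (via Lemma~\ref{intersection}) to confine the chain to $[0,J]$, but from there they diverge. The paper proceeds by induction on the number of \emph{hinge ideals} (local extrema of the zigzag): it eliminates the first hinge by repeatedly applying the single-element Lemma~\ref{adjunction}, adjoining the generators $x_0,\ldots,x_{t-1}$ of the initial descent to every ideal in the chain, thereby collapsing that descent and reducing the hinge count. Your construction avoids this induction entirely by writing down the answer in closed form, $J_k=\sum_{j\leq k}(I_j\cap J)$, and then checking adjacency step by step using an ideal-valued strengthening of Lemma~\ref{adjunction} (whose proof, as you note, is identical to the element version). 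What your approach buys is brevity and a global description of the refined chain; what the paper's approach buys is that it stays within the element-at-a-time toolkit already developed, without needing to state or prove the ideal-valued variant. Both yield the same bound $M\leq N$ for the same reason: each of the $N$ steps contributes at most one strict inclusion.
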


\begin{proof}
Using the notation above, we say that the ideal $I_k$ is a {\it hinge ideal} if $I_k$ either
properly contains both $I_{k-1}$ and $I_{k+1}$ or is properly contained in them both. We
proceed by induction on $m$, the number of hinge ideals between $I$ and $J$.

At the outset, we simplify matters by assuming that all of the ideals $\{I_s\}$ are contained in $J$ by intersecting the collection with $J$
and applying Lemma \ref{intersection}; we also note that if this chain is increasing, then the conclusion holds. We now describe a reduction process that will greatly simplify the inductive argument. 

We first suppose that the first hinge ideal is contained in $I$; that is, we have the decreasing chain of ideals

\[
I:=I_0\supseteq I_1\supseteq\cdots\supseteq I_t.
\]

Because
of the adjacency of successive elements of the chain, we can find $x_s\in I_s\setminus
I_{s+1},\ 0\leq s\leq t-1$ such that

\[
I_s=(I_{s+1},x_s).
\]

We now make a preliminary refinement of the collection $\{I_s\}_{s=0}^N$ by letting $I_s^{\prime}=(I_s,x_0,x_1,\cdots ,x_{t-1})$ for $0\leq s\leq N$. Applying Lemma \ref{adjunction} $t$ times to the entire collection of ideals shows
that the new collection $\{I_s^{\prime}\}$ begins as

\[
I=I_0^{\prime}=I_1^{\prime}=\cdots=I_t^{\prime}\subseteq I_{t+1}^{\prime}
\]

\noindent and the ideals $\{I_s^{\prime}\}_{s=t}^{N}$ are successively adjacent (or
equal). Additionally if $I_j$ is an ideal in the collection that is not initially a hinge ideal, then $I_j^{\prime}$ can neither be properly contained in, nor properly contain, both the ideals $I_{j-1}^{\prime}$ and $I_{j+1}^{\prime}$. So when we identify ideals that are equal, our refined collection has less than or equal to the initial number of ideals and at least one less hinge ideal.

With this in hand, we proceed to argue inductively on the number, $m$, of hinge ideals appearing between $I$ and $J$. If $m=0$ then the chain is increasing and the conclusion is immediate, and this, coupled with the above argument, gives the case $m=1$. We now suppose that the conclusion holds for $m\geq 0$ and consider the case of $m+1$ hinge ideals. 

In the first case, we assume that the first hinge ideal $I_t$ is contained in $I$. The previous argument shows that we can refine so that $I=I_t^{\prime}$, and as above, the number of ideals in the new collection (after equal ideals are identified) is nonincreasing and the number of hinge ideals in the new collection is less than or equal to $m$. By induction, we are done in this case.

In the second case, we suppose that the first hinge ideal $I_t$ contains $I$. In this case, we consider the collection $\{I_s\}_{s=t}^N$. Since this collection has $m$ hinge ideals with the first being contained in $I_t$, we apply the first case and extract the increasing chain $\{I_s^{\prime}\}_{s=0}^k$ with $I_0^{\prime}=I_t$ with $k\leq N-t$. Combining this increasing chain with $\{I_r\}_{r=0}^t$ (and subtracting $1$ for the repetition of $I_t$), we obtain our increasing chain with length $t+k+1\leq N-t+1+t=N+1$.
\end{proof}

We now characterize some types of rings via their induced adjacency graphs; we begin with $GA_0(-)$.

\begin{thm}\label{art}
Let $R$ be a commutative ring with identity and $GA_0(R)$ its adjacency graph. The following are
equivalent.
\begin{enumerate}
\item $R$ is Artinian.
\item $GA_0(R)$ is connected.
\end{enumerate}
\end{thm}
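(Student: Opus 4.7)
The plan is to argue both directions using composition series together with Proposition~\ref{chainrefine}. The harder part will be (2)$\Rightarrow$(1), where connectedness of $GA_0(R)$ only supplies a possibly zig-zagging chain of adjacent ideals between two vertices; Proposition~\ref{chainrefine} straightens this chain out, after which the Jordan--H\"older machinery finishes the job.

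For (1)$\Rightarrow$(2), I would begin by recalling that a commutative Artinian ring is also Noetherian, so $R$ has finite length as an $R$-module and consequently every submodule does as well. Given two distinct proper ideals $I$ and $J$, the intersection $I\cap J$ is a proper ideal (since $I\cap J\subseteq I\subsetneq R$), and the $R$-modules $I/(I\cap J)$ and $J/(I\cap J)$ have finite length, so each admits a composition series. Via the lattice correspondence these lift to finite chains
\[
I\cap J=K_0\subsetneq K_1\subsetneq\cdots\subsetneq K_s=I,\qquad I\cap J=L_0\subsetneq L_1\subsetneq\cdots\subsetneq L_t=J,
\]
in which consecutive terms are adjacent. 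Concatenating them yields a path in $GA_0(R)$ from $I$ to $J$. Note that it is essential to route through $I\cap J$ rather than $I+J$ so that the intermediate vertex stays a proper ideal.

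For (2)$\Rightarrow$(1), I would first dispose of the trivial case in which $R$ is a field (both conditions then hold vacuously). Otherwise, choose any maximal ideal $\mathfrak{M}\subsetneq R$; since $R$ is not a field, $(0)\neq\mathfrak{M}$ and these are distinct vertices of $GA_0(R)$. By connectedness there is a finite chain of adjacent ideals between them, and Proposition~\ref{chainrefine} refines it to an increasing chain
\[
(0)=I_0\subsetneq I_1\subsetneq\cdots\subsetneq I_N=\mathfrak{M}
\]
with successive terms adjacent. Since adjacency of $I_k$ and $I_{k+1}$ forces $I_{k+1}/I_k$ to be a simple $R$-module, this chain is a composition series for $\mathfrak{M}$, so $\mathfrak{M}$ has finite length. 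Appending $R$ on top (with $R/\mathfrak{M}$ simple) gives a composition series for $R$ itself, whence $R$ has finite length as a module over itself and is therefore Artinian.

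The main obstacle is precisely the zig-zag phenomenon in (2)$\Rightarrow$(1): connectedness of $GA_0(R)$ supplies only a walk, not an ascending chain, and the conversion to a true composition series is exactly what Proposition~\ref{chainrefine} delivers. Once that refinement is in hand, the remaining steps reduce to standard facts about length and simple quotients, and the only subtlety to double-check is the field case, which must be treated separately so that a nonzero maximal ideal is available to anchor the refinement argument.
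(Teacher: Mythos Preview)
Your proof is correct and follows essentially the same approach as the paper: both directions hinge on composition series together with Proposition~\ref{chainrefine}. The only cosmetic differences are that for (1)$\Rightarrow$(2) the paper routes every ideal to $(0)$ rather than through $I\cap J$, and for (2)$\Rightarrow$(1) the paper connects $(0)$ to an arbitrary ideal rather than to a single maximal ideal; your versions are slightly more explicit (in particular your remark about $I\cap J$ versus $I+J$ and your separate handling of the field case are careful touches the paper omits), but the underlying argument is the same.
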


\begin{proof}
If $R$ is Artinian, then every finitely generated $R-$module (in particular, an ideal of $R$) has a composition series. Since every ideal of $R$ is connected to $(0)$, we have that $GA_0(R)$ is connected.

On the other hand, if $GA_0(R)$ is connected, then there is a (finite) path between $(0)$ and any ideal of $R$. Proposition \ref{chainrefine} allows this to be refined to a composition series, and hence $R$ is Artinian.
\end{proof}

We now produce the following corollary with a slight variant. We define the graph $GA^*_0(R)$ to be the subgraph of $GA_0(R)$ with the $0-$ideal vertex removed.

\begin{cor}
Let $R$ be an integral domain that is not a field. $GA^*_0(R)$ is connected if and only if $R$ is Noetherian and
$\text{dim}(R)=1$.
\end{cor}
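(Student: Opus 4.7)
The plan is to prove the two implications separately, leveraging the structural results already established (Theorem \ref{art}, Proposition \ref{chainrefine}, Proposition \ref{mod}, and Proposition \ref{na}).

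For the sufficiency direction ($\Leftarrow$), assume $R$ is Noetherian and one-dimensional. Given any two nonzero proper ideals $I,J$ of $R$, I first observe that $I\cap J\neq 0$ since $R$ is a domain. By Proposition \ref{na}, $R/(I\cap J)$ is Artinian, so Theorem \ref{art} gives that $GA_0(R/(I\cap J))$ is connected. In particular $I/(I\cap J)$ and $J/(I\cap J)$ are connected there by a finite path of adjacent ideals of $R/(I\cap J)$. By Proposition \ref{mod}, this path lifts to a path in $R$ of adjacent ideals each containing the nonzero ideal $I\cap J$ and hence all nonzero, so $I$ and $J$ are connected in $GA^*_0(R)$.

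For the necessity direction ($\Rightarrow$), assume $GA^*_0(R)$ is connected. Let $I$ be an arbitrary nonzero proper ideal of $R$ and choose a maximal ideal $\mathfrak{M}$ containing $I$. Connectedness provides a path of adjacent nonzero ideals from $I$ to $\mathfrak{M}$; since $I\subseteq\mathfrak{M}$, Proposition \ref{chainrefine} refines this into an increasing chain
\[
I=J_0\subsetneq J_1\subsetneq\cdots\subsetneq J_n=\mathfrak{M}
\]
of adjacent ideals (automatically all nonzero, since they contain $I$). Passing to $R/I$ via Proposition \ref{mod} produces a composition series $0=J_0/I\subsetneq\cdots\subsetneq\mathfrak{M}/I$ of $\mathfrak{M}/I$ as an $R/I$-module. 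Appending the simple quotient $(R/I)/(\mathfrak{M}/I)\cong R/\mathfrak{M}$ yields a composition series of $R/I$, so $R/I$ has finite length, hence is Artinian.

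With this key fact in hand, I conclude in two steps. First, applying it with $I=\mathfrak{p}$ a nonzero prime shows $R/\mathfrak{p}$ is an Artinian domain, hence a field, so $\mathfrak{p}$ is maximal; as $R$ is not a field, $\dim R=1$. Second, with $\dim R=1$ established and $R/I$ Artinian for every nonzero $I$, Proposition \ref{na} gives that $R$ is Noetherian. The main technical step is the appeal to Proposition \ref{chainrefine} to convert an arbitrary $GA^*_0$-path into an increasing one while remaining in $GA^*_0(R)$; the key observation is that an increasing chain from the nonzero ideal $I$ consists entirely of nonzero ideals, so the zero ideal is never re-introduced.
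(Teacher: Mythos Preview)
Your proof is correct, and the forward direction matches the paper's. For the backward direction the overall strategy is the same---show $R/I$ is Artinian for every nonzero $I$, then deduce Noetherian and one-dimensional---but you reach ``$R/I$ Artinian'' differently: the paper asserts that $GA_0(R/I)$ is connected and invokes Theorem~\ref{art}, whereas you bypass that theorem and use Proposition~\ref{chainrefine} to build a composition series for $R/I$ directly. Your route is more explicit and in fact patches a small gap in the paper's exposition: the passage from ``$GA^*_0(R)$ connected'' to ``$GA_0(R/I)$ connected'' via Proposition~\ref{mod} alone is not immediate, since a path in $GA^*_0(R)$ between two ideals containing $I$ need not stay among ideals containing $I$; refining to an increasing chain from $I$, as you do, is precisely what is needed. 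You also establish $\dim R=1$ before invoking Proposition~\ref{na}, which matches its stated hypothesis; the paper applies Proposition~\ref{na} first, which is harmless only because the relevant implication of that proposition does not actually use the dimension assumption.
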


\begin{proof}
Suppose that $R$ is $1-$dimensional and Noetherian and let $I, J\subseteq R$ be nonzero ideals.
To show that $GA^*_0(R)$ is connected, it suffices to show that $I$ and $I\bigcap J$ can be
connected in a finite sequence of steps. To this end, we note that since $I\bigcap J$ is
nonzero, the ring $R/(I\bigcap J)$ is $0-$dimensional and Noetherian, and hence Artinian. By Theorem \ref{art} there is a finite sequence
of adjacent ideals (of the displayed form)

\[
(I\bigcap J)/(I\bigcap J)\subset I_1/(I\bigcap J)\subset I_2/(I\bigcap J)\subset\cdots\subset
I/(I\bigcap J)
\]

\noindent connecting $I/(I\bigcap J)$ to the zero ideal in $R/(I\bigcap J)$. By Proposition
\ref{mod} this corresponds to a chain of adjacent ideals

\[
(I\bigcap J)\subset I_1\subset I_2\subset\cdots\subset I
\]

\noindent in $R$ and hence $GA^*_0(R)$ is connected.

Now we suppose that $GA^*_0(R)$ is connected. Let $I\subset R$ be an arbitrary nonzero ideal.
Proposition \ref{mod} assures us that adjacency is preserved modulo $I$ and so we obtain that
$GA_0(R/I)$ is connected. Hence Theorem \ref{art} gives us that $R/I$ is Artinian (for any
nonzero ideal $I$). From Proposition \ref{na} we obtain that $R$ is Noetherian.

To see that $R$ is $1-$dimensional, we suppose that there is a chain of primes
$(0)\subsetneq\mathfrak{P}\subsetneq\mathfrak{M}$. If we choose the ideal $I=\mathfrak{P}$, we would have that $\text{dim}(R/\mathfrak{P})\geq 1$ and hence $R/\mathfrak{P}$ is not
Artinan. We conclude that $GA_0(R/\mathfrak{P})$ is not connected and again Proposition \ref{mod} gives us that $GA^*_0(R)$ cannot be connected and so $\text{dim}(R)\leq 1$. Because of the fact that $R$ is a domain that is not a field, then the dimension of $R$ is precisely $1$.
\end{proof}

\begin{prop}\label{g2conn}
If $GA_0(R)$ or $GA_1(R)$ is connected, then so is $GA_2(R)$.
\end{prop}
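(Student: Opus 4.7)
The plan is to recognize this proposition as an essentially formal consequence of the subgraph containments already established in Lemma \ref{colon} and Lemma \ref{adj}. All three graphs $GA_0(R)$, $GA_1(R)$, and $GA_2(R)$ share the same vertex set, namely the proper ideals of $R$. Lemma \ref{adj} asserts that whenever $I \subsetneq J$ are adjacent, the conductor $(I:_R J)$ is maximal, so every edge of $GA_0(R)$ is an edge of $GA_2(R)$; and Lemma \ref{colon} asserts that whenever $I = J\mathfrak{M}$ for some maximal $\mathfrak{M}$, the same conductor equality holds, so every edge of $GA_1(R)$ is likewise an edge of $GA_2(R)$.

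First I would fix two distinct proper ideals $I$ and $J$ of $R$ and assume $GA_0(R)$ is connected (the argument in the $GA_1(R)$ case is the same up to which lemma is invoked). Then there exists a path
\[
I = K_0 - K_1 - \cdots - K_n = J
\]
in $GA_0(R)$. By Lemma \ref{adj}, each consecutive pair $K_i, K_{i+1}$ also satisfies the defining condition of $GA_2(R)$, so the very same vertex sequence is a path in $GA_2(R)$. Because $I, J$ were arbitrary, $GA_2(R)$ is connected. Replacing the invocation of Lemma \ref{adj} with Lemma \ref{colon} handles the hypothesis that $GA_1(R)$ is connected.

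There is no genuine obstacle here: the substantive content was already absorbed into the two preceding lemmas, and what remains is the elementary observation that a spanning subgraph being connected forces the ambient graph on the same vertex set to be connected. In that sense the proposition is best viewed as a corollary recording the logical relationship among the three adjacency-type graphs, setting the stage for the later classification/diameter results for $GA_2(R)$.
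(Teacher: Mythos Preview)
Your proposal is correct and matches the paper's own proof exactly: the paper simply notes that since the vertex sets coincide, the result is immediate from Lemma~\ref{colon} and Lemma~\ref{adj}. You have just spelled out the one-line observation in slightly more detail.
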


\begin{proof}
Since the vertex sets are the same, this is immediate from Lemma \ref{colon} and Lemma \ref{adj}.
\end{proof}

We now present a strong characterization of connectedness for the graphs $GA_i(R)$, $i=1,2$. Of course this condition is weaker than the Artinian condition.

\begin{thm}\label{conn}
Let $R$ be a commutative ring with identity. The following conditions are equivalent.
\begin{enumerate}
\item $GA_1(R)$ is connected.
\item $GA_2(R)$ is connected.
\item There is a collection of not necessarily distinct maximal ideals
$\{\mathfrak{M}_1,\mathfrak{M}_2,\cdots,\mathfrak{M}_n\}$ such that
$\mathfrak{M}_1\mathfrak{M}_2\cdots\mathfrak{M}_n=0$.
\end{enumerate}
\end{thm}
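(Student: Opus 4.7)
The plan is a cyclic proof $(1) \Rightarrow (2) \Rightarrow (3) \Rightarrow (1)$, of which the first implication is exactly Proposition~\ref{g2conn}. For $(3) \Rightarrow (1)$: given maximals with $\mathfrak{M}_1\mathfrak{M}_2\cdots\mathfrak{M}_n = 0$ and any proper ideal $I \subseteq R$, consider the descending chain
\[
I \supseteq I\mathfrak{M}_1 \supseteq I\mathfrak{M}_1\mathfrak{M}_2 \supseteq \cdots \supseteq I\mathfrak{M}_1\cdots\mathfrak{M}_n = 0.
\]
Each consecutive pair of distinct ideals is by construction a $GA_1$-edge (the smaller being the larger times a single maximal), so after deleting repetitions we obtain a $GA_1$-path from $I$ to $(0)$, and $GA_1(R)$ is connected.

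For $(2) \Rightarrow (3)$, I would introduce the set
\[
\mathcal{A} = \{I \subseteq R \text{ proper ideal} : \exists \text{ maximals } \mathfrak{N}_1, \ldots, \mathfrak{N}_k, \; I\mathfrak{N}_1\cdots\mathfrak{N}_k = 0\}.
\]
Verify the easy facts: $(0) \in \mathcal{A}$; $\mathcal{A}$ is closed under subideals (if $IP = 0$ and $I' \subseteq I$ then $I'P = 0$); and closed under finite sums (if $IP = JQ = 0$, then $(I+J)PQ = 0$). The key propagation is this: whenever $I - J$ is a $GA_2$-edge witnessed by $(I:J) = \mathfrak{M}$ maximal, $J\mathfrak{M} \subseteq I$, so $IP = 0$ forces $J(\mathfrak{M}P) = 0$ and $J \in \mathcal{A}$. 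Starting from $(0) \in \mathcal{A}$ and using connectedness, walk along $GA_2$-paths to every proper ideal, propagating $\mathcal{A}$-membership step by step. Once some maximal ideal $\mathfrak{M}_0$ is shown to lie in $\mathcal{A}$, condition (3) follows at once.

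The main obstacle is the asymmetry of the $GA_2$-edge condition: an edge $I - J$ may be realized by $(I:J) = \mathfrak{M}$ maximal or by $(J:I) = \mathfrak{M}'$ maximal, and the direct propagation above handles only the former orientation. For the reverse configuration, $(J:I) = \mathfrak{M}'$ maximal gives $I\mathfrak{M}' \subseteq J$ and hence $(I+J)\mathfrak{M}' \subseteq J$; I would pair this with the subideal and sum closures of $\mathcal{A}$ to extend $\mathcal{A}$-propagation through edges of either orientation. Orchestrating this two-sided propagation uniformly along an arbitrary path from $(0)$ to a maximal ideal is the delicate heart of the $(2) \Rightarrow (3)$ implication.
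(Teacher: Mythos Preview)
Your $(1)\Rightarrow(2)$ and $(3)\Rightarrow(1)$ match the paper exactly. For $(2)\Rightarrow(3)$ your $\mathcal{A}$-propagation is the dual of what the paper does: the paper starts at a maximal $\mathfrak{M}$ and carries a product $P$ of maximals with $P\subseteq I_k$ forward along the path toward $(0)$; you start at $(0)$ and push $\mathcal{A}$-membership toward a maximal. Both pictures face the orientation asymmetry you correctly flag.

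Your proposed patch via sums, however, does not close the gap. From $I\in\mathcal{A}$ and $(J:I)=\mathfrak{M}'$ you get $(I+J)\mathfrak{M}'\subseteq J$, but this says $(J:I+J)=\mathfrak{M}'$, i.e.\ the \emph{same} bad orientation: it lets you propagate $\mathcal{A}$ from $J$ to $I+J$, not from $I+J$ to $J$. Sum-closure of $\mathcal{A}$ needs \emph{both} summands already in $\mathcal{A}$, so you cannot first place $I+J$ in $\mathcal{A}$ from $I$ alone. Descending instead to $I\cap J$ (via subideal closure) hits the same wall: the step $I\cap J\to J$ requires $J\cdot(\text{maximal})\subseteq I$, exactly the orientation you are missing. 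So ``orchestrating two-sided propagation'' with these closures is not merely delicate---it does not go through.

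The paper avoids two-sided propagation altogether. It organizes the path from $\mathfrak{M}$ to $(0)$ as a containment zig-zag and uses a hinge-ideal decomposition: on \emph{descending} segments (from a local max down to the next local min) it multiplies by the successive conductors $M_k=(I_{k+1}:I_k)$, just as your forward case does; on \emph{ascending} segments (from a local min $H_{2a+1}$ up to the next local max $H_{2a+2}$) it uses nothing but the containment $H_{2a+1}\subseteq H_{2a+2}$ to carry the accumulated product along unchanged. The key point is that ascents are free---no reverse propagation is ever needed. Recasting your argument in this hinge framework (track a product contained in the current ideal, multiply on descents, carry on ascents) is what finishes $(2)\Rightarrow(3)$; trying to make $\mathcal{A}$ closed under arbitrary $GA_2$-edges is the wrong target.
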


\begin{proof}
By Proposition \ref{g2conn}, we have the implication $(1)\Longrightarrow (2)$. For the
implication $(2)\Longrightarrow (3)$, we suppose that $GA_2(R)$ is connected and that
$\mathfrak{M}$ is a maximal ideal of $R$. By assumption, there is a finite path from
$\mathfrak{M}$ to the ideal $(0)$:

\[
\mathfrak{M}=I_0\supsetneq I_1\bowtie I_2\bowtie\cdots\bowtie
I_m\supsetneq I_{m+1}=0
\] 
\noindent where each $\bowtie$ denotes either $\supsetneq$ or $\subsetneq$. In the
proof of this implication, we will use the notion of hinge ideals introduced in the proof of
Proposition \ref{chainrefine}. Note that there must be an even number of hinge ideals in the
path described above, which we will denote $H_1, H_2,\cdots, H_{2t}$. So an abbreviated version
of the path described above can be expressed in the form

\[
\mathfrak{M}\supsetneq H_1\subsetneq H_2\supsetneq\cdots\subsetneq H_{2t}\supsetneq 0
\]

\noindent where we will have the convention that $H_j=I_{s_j}$ for all $1\leq j\leq 2t$. We
also declare that $s_0=0$ and $s_{2t+1}=m+1$.

Since this is a path in the graph $GA_2(R)$, then successive ideals must have maximal conductor.
We will say that $M_k=(I_{k+1}:I_k)$ if $s_{2a}\leq k\leq s_{2a+1}$ for $0\leq a\leq t$.% and $N_k=(I_k:I_{k+1})$ if
%$s_{2a+1}\leq k\leq s_{2a+2}$ for $0\leq a\leq t-1$.

We first note that

\[
H_1=I_{s_1}\supseteq \mathfrak{M}M_0M_1\cdots M_{s_1-1}
\]

\noindent and since $H_1\subseteq H_2$, we have that

\[ 
H_2\supseteq\mathfrak{M}M_0M_1\cdots M_{s_1-1}.
\]

In a similar fashion, we note that 

\[
H_3=I_{s_3}\supseteq H_2M_{s_2}M_{s_2+1}\cdots M_{s_3-1}\supseteq\mathfrak{M}M_0M_1\cdots
M_{s_1-1}M_{s_2}M_{s_2+1}\cdots M_{s_3-1}.
\]

Inductively we obtain

\[
H_{2i+1}\supseteq\mathfrak{M}M_0M_1\cdots M_{s_1-1}M_{s_2}M_{s_2+1}\cdots M_{s_3-1}\cdots
M_{s_{2i}}M_{s_{2i}+1}\cdots M_{s_{2i+1}-1}.
\]

\noindent In particular we obtain 

\[
0=\mathfrak{M}M_0M_1\cdots M_{s_1-1}M_{s_2}M_{s_2+1}\cdots M_{s_3-1}\cdots
M_{s_{2t}}M_{s_{2t}+1}\cdots M_{m}
\]

\noindent and hence there is a collection of maximal ideals with product equal to $(0)$.

For the implication $(3)\Longrightarrow (1)$, we will assume that there is a collection of
maximal ideals $\mathfrak{M}_i,\ 1\leq i\leq n$ such that
$\mathfrak{M}_1\mathfrak{M}_2\cdots\mathfrak{M}_n=0$. To show that $GA_1(R)$ is connected, it
suffices to show that if $I\subset R$ is an arbitrary ideal, then there is a finite path to the
zero ideal. But note that

\[
I\supseteq I\mathfrak{M}_1\supseteq I\mathfrak{M}_1\mathfrak{M}_2\supseteq\cdots\supseteq
I\mathfrak{M}_1\mathfrak{M}_2\cdots\mathfrak{M}_n=0
\]

\noindent is such a path of length no more than $n$.
\end{proof}

\begin{cor}
If $GA_i(R),\ 1\leq i\leq 2$ is connected then $\text{diam}(GA_i(R))\leq 2n$ where $n$ is the
smallest positive integer for which there is a collection of maximal ideals $\mathfrak{M}_i,
1\leq i\leq n$ for which $\mathfrak{M}_1\mathfrak{M}_2\cdots\mathfrak{M}_n=0$.
\end{cor}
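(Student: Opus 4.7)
The plan is to read the bound straight off the constructive argument used in the implication $(3)\Longrightarrow (1)$ of Theorem~\ref{conn}. By that theorem, the hypothesis (connectedness of $GA_i(R)$ for $i\in\{1,2\}$) guarantees the existence of a collection of maximal ideals whose product is $0$; take $n$ to be the minimum such length, and fix maximal ideals $\mathfrak{M}_1,\ldots,\mathfrak{M}_n$ with $\mathfrak{M}_1\mathfrak{M}_2\cdots\mathfrak{M}_n=0$.

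Given an arbitrary proper ideal $I\subset R$, I would consider the descending telescope
\[
I\;\supseteq\; I\mathfrak{M}_1\;\supseteq\; I\mathfrak{M}_1\mathfrak{M}_2\;\supseteq\;\cdots\;\supseteq\; I\mathfrak{M}_1\mathfrak{M}_2\cdots\mathfrak{M}_n=0.
\]
Each term is contained in $I$, hence proper, and so lies in the vertex set of $GA_1(R)$. Any two consecutive terms $I\mathfrak{M}_1\cdots\mathfrak{M}_k$ and $I\mathfrak{M}_1\cdots\mathfrak{M}_{k+1}$ are either equal (in which case we simply identify them) or differ by multiplication by the maximal ideal $\mathfrak{M}_{k+1}$, which is precisely an edge in $GA_1(R)$. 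Discarding repetitions, this yields a path from $I$ to $(0)$ in $GA_1(R)$ of length at most $n$.

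Given two proper ideals $I$ and $J$, I would concatenate a path $I\to(0)$ of length $\leq n$ with a path $(0)\to J$ of length $\leq n$ to obtain a walk of length at most $2n$ between $I$ and $J$ in $GA_1(R)$. This gives $\mathrm{diam}(GA_1(R))\leq 2n$. For the $GA_2$ case, I would invoke Lemma~\ref{colon}: the two graphs share the same vertex set, and every edge of $GA_1(R)$ is an edge of $GA_2(R)$, so any walk in $GA_1(R)$ is also a walk in $GA_2(R)$; hence $\mathrm{diam}(GA_2(R))\leq\mathrm{diam}(GA_1(R))\leq 2n$.

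There is no real obstacle here since the heavy lifting was done in Theorem~\ref{conn}; the only small care needed is to observe that collapsing repeated terms only shortens the path and that every term $I\mathfrak{M}_1\cdots\mathfrak{M}_k$ is proper, so the constructed walks genuinely live inside the graphs under consideration.
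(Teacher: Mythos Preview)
Your argument is correct and essentially identical to the paper's: both read the bound directly off the telescoping path $I\supseteq I\mathfrak{M}_1\supseteq\cdots\supseteq 0$ constructed in the proof of $(3)\Rightarrow(1)$ of Theorem~\ref{conn}, concatenate through $(0)$ to get $2n$, and then pass to $GA_2(R)$ via the subgraph inclusion (you cite Lemma~\ref{colon}, the paper cites Proposition~\ref{g2conn}, but these amount to the same thing). Your added remarks about collapsing repetitions and properness of the intermediate ideals are sound and make the argument slightly more explicit than the paper's one-line version.
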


\begin{proof}
The fact that $\text{diam}(GA_1(R))\leq 2n$ is immediate from the proof of the implication
$(3)\Longrightarrow (1)$ in Theorem \ref{conn}. The fact that $\text{diam}(GA_2(R))\leq 2n$
follows from the fact that $GA_1(R)$ is a subgraph of $GA_2(R)$ (see Proposition \ref{g2conn}).
\end{proof}

\begin{cor}\label{q0}
If $GA_i(R)$ is connected for some $0\leq i\leq 2$ then $R$ is semiquasilocal and $0-$dimensional.
\end{cor}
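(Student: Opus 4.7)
The plan is to reduce the statement to the characterizations already established in Theorem \ref{art} and Theorem \ref{conn}, and then run a short prime-avoidance argument. I would split into two cases according to the value of $i$.

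For $i=0$, connectedness of $GA_0(R)$ gives by Theorem \ref{art} that $R$ is Artinian, and Artinian rings are automatically semiquasilocal and of Krull dimension $0$, so there is nothing more to do in this case.

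For $i=1$ or $i=2$, the relevant input is Theorem \ref{conn}, which produces a (finite) list of maximal ideals $\mathfrak{M}_1,\mathfrak{M}_2,\dots,\mathfrak{M}_n$ (not necessarily distinct) with
\[
\mathfrak{M}_1\mathfrak{M}_2\cdots\mathfrak{M}_n = 0.
\]
Now let $\mathfrak{P}$ be an arbitrary prime ideal of $R$. Since $\mathfrak{M}_1\cdots\mathfrak{M}_n=0\subseteq \mathfrak{P}$ and $\mathfrak{P}$ is prime, some $\mathfrak{M}_i\subseteq\mathfrak{P}$; because $\mathfrak{M}_i$ is maximal and $\mathfrak{P}$ is proper, this forces $\mathfrak{P}=\mathfrak{M}_i$. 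Hence $\spec(R)\subseteq\{\mathfrak{M}_1,\dots,\mathfrak{M}_n\}$, from which it follows at once that $R$ has only finitely many prime ideals (all of them maximal), so $R$ is semiquasilocal and $\dim(R)=0$.

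There is no real obstacle; the work has been done in Theorem \ref{art} and Theorem \ref{conn}, and the only new content is the one-line prime-avoidance observation that a finite product of maximal ideals being zero forces every prime to coincide with one of those maximal ideals.
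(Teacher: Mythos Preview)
Your proof is correct and follows essentially the same approach as the paper: reduce to Theorem \ref{art} for $i=0$ and to Theorem \ref{conn} for $i=1,2$, then use that a prime containing a product of ideals contains one of the factors. The only cosmetic difference is that the paper treats semiquasilocality and $0$-dimensionality in two separate passes (first for arbitrary maximal ideals, then by contradiction for a putative non-maximal prime), whereas you handle both at once by applying the containment argument to an arbitrary prime $\mathfrak{P}$ and concluding $\spec(R)\subseteq\{\mathfrak{M}_1,\dots,\mathfrak{M}_n\}$ directly; your version is slightly more economical but the content is the same.
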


\begin{proof}
By Theorem \ref{art}, $GA_0(R)$ is connected if and only if $R$ is Artinian, and hence $R$ is
$0-$dimensional, and, in this case, semilocal. If $GA_1(R)$ or $GA_2(R)$ is connected then
Theorem \ref{conn} gives that $\mathfrak{M}_1\mathfrak{M}_2\cdots\mathfrak{M}_n=0$ for (not
necessarily distinct) ideals $\{\mathfrak{M}_1,\mathfrak{M}_2,\cdots,\mathfrak{M}_n\}$. If
$\mathfrak{M}$ is an arbitrary maximal ideal, then
$\mathfrak{M}\supseteq\mathfrak{M}_1\mathfrak{M}_2\cdots\mathfrak{M}_n$ and hence
$\mathfrak{M}=\mathfrak{M}_k$ for some $1\leq k\leq n$ which shows that the list of ideals
$\{\mathfrak{M}_1,\mathfrak{M}_2,\cdots,\mathfrak{M}_n\}$ contains $\text{MaxSpec}(R)$. Hence
$R$ is semiquasilocal.

To see that $R$ is $0-$dimensional, we appeal once again to the fact that 

\[
\mathfrak{M}_1\mathfrak{M}_2\cdots\mathfrak{M}_n=0.
\]

Recalling that this collection of maximal ideals contains $\text{MaxSpec}(R)$, we suppose that
we can find a prime ideal $\mathfrak{P}$ such that $\mathfrak{M}_k\supsetneq\mathfrak{P}$.
Since $\mathfrak{P}\supseteq\mathfrak{M}_1\mathfrak{M}_2\cdots\mathfrak{M}_n=0$, we must have
that $\mathfrak{P}\supseteq\mathfrak{M}_i$ for some $1\leq i\leq n$. Hence
$\mathfrak{M}_i\subsetneq\mathfrak{M}_k$ which is our desired contradiction.
\end{proof}

As a companion to Proposition \ref{g2conn}, we present the following corollary.

\begin{cor}
If $GA_0(R)$ is connected, then both $GA_1(R)$ and $GA_2(R)$ are connected. 
\end{cor}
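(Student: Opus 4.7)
The corollary reduces to combining two prior results. Proposition \ref{g2conn} already hands us one half: if $GA_0(R)$ is connected, then $GA_2(R)$ is connected (this is the ``subgraph'' implication that comes from Lemma \ref{adj}, which shows that every edge of $GA_0(R)$ is also an edge of $GA_2(R)$, so any path in $GA_0(R)$ is automatically a path in $GA_2(R)$). So the only content remaining is the $GA_1(R)$ piece.

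For $GA_1(R)$, the plan is to route through Theorem \ref{conn}. Once we know $GA_2(R)$ is connected, the equivalence $(2)\Longleftrightarrow(3)$ in Theorem \ref{conn} tells us there is a finite list of maximal ideals $\mathfrak{M}_1,\ldots,\mathfrak{M}_n$ (not necessarily distinct) with $\mathfrak{M}_1\cdots\mathfrak{M}_n=0$. Then the equivalence $(3)\Longleftrightarrow(1)$ in the same theorem gives us that $GA_1(R)$ is connected; concretely, for any proper ideal $I$ the descending chain $I\supseteq I\mathfrak{M}_1\supseteq I\mathfrak{M}_1\mathfrak{M}_2\supseteq\cdots\supseteq I\mathfrak{M}_1\cdots\mathfrak{M}_n=0$ supplies a path (of length at most $n$) in $GA_1(R)$ from $I$ to $(0)$, and since $(0)$ serves as a common hub, $GA_1(R)$ is connected.

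There is really no obstacle here; the work has been done already in Proposition \ref{g2conn} and Theorem \ref{conn}. An alternative, equally short, argument bypasses $GA_2(R)$ entirely: by Theorem \ref{art}, the hypothesis ``$GA_0(R)$ connected'' is equivalent to $R$ being Artinian, hence the Jacobson radical is nilpotent and $R$ has only finitely many maximal ideals $\mathfrak{M}_1,\ldots,\mathfrak{M}_k$; a standard comaximality argument gives $\mathfrak{M}_1\cdots\mathfrak{M}_k\subseteq J(R)$, so some power of this product vanishes, producing a finite sequence of (repeated) maximal ideals with zero product, and Theorem \ref{conn} then delivers both $GA_1(R)$ and $GA_2(R)$ at once. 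I would present the first version, since it uses only the two neighboring results and requires no appeal to structure theory for Artinian rings.
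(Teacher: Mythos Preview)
Your proof is correct and matches the paper's own argument exactly: the paper simply says ``Combine the results from Proposition \ref{g2conn} and Theorem \ref{conn},'' which is precisely your first route ($GA_0(R)$ connected $\Rightarrow$ $GA_2(R)$ connected via Proposition \ref{g2conn}, then $GA_2(R)$ connected $\Rightarrow$ $GA_1(R)$ connected via the equivalence in Theorem \ref{conn}). Your alternative Artinian argument is also valid but not needed.
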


\begin{proof}
Combine the results from Proposition \ref{g2conn} and Theorem \ref{conn}.
\end{proof}

As a final observation, we consider the following.

\begin{cor}
Let $R$ be a commutative ring with identity. We consider the following conditions:

\begin{enumerate}
\item $GA_0(R)$ is connected.
\item $GA_1(R)$ is connected.
\item $GA_2(R)$ is connected.
\end{enumerate}

If $R$ is Noetherian, the above three conditions are equivalent.
\end{cor}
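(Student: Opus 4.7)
The plan is to close a cycle of implications, and almost all the pieces are already assembled in the paper. From the previous corollary we already have $(1)\Rightarrow(2)$ and $(1)\Rightarrow(3)$, and Proposition \ref{g2conn} gives $(2)\Rightarrow(3)$. Thus the only new implication that uses the Noetherian hypothesis in an essential way is $(3)\Rightarrow(1)$.

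For $(3)\Rightarrow(1)$, I would proceed as follows. Assume $R$ is Noetherian and $GA_2(R)$ is connected. By Corollary \ref{q0}, the connectedness of $GA_2(R)$ already forces $R$ to be semiquasilocal and $0$-dimensional (without using the Noetherian hypothesis). Combining $\dim(R)=0$ with the standing assumption that $R$ is Noetherian yields, by the classical theorem of Akizuki--Hopkins (a Noetherian ring of Krull dimension zero is Artinian), that $R$ is Artinian. Theorem \ref{art} then gives that $GA_0(R)$ is connected, so $(1)$ holds.

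An alternative self-contained route, still under $R$ Noetherian and $GA_2(R)$ connected, is to appeal directly to Theorem \ref{conn}: one obtains maximal ideals $\mathfrak{M}_1,\ldots,\mathfrak{M}_n$ (not necessarily distinct) with $\mathfrak{M}_1\mathfrak{M}_2\cdots\mathfrak{M}_n=0$. Each quotient $\mathfrak{M}_1\cdots\mathfrak{M}_{k-1}/\mathfrak{M}_1\cdots\mathfrak{M}_k$ is a finitely generated module over the field $R/\mathfrak{M}_k$, hence of finite length; summing over $k$, the ring $R$ itself has finite length as an $R$-module, and so is Artinian. Again Theorem \ref{art} closes the loop.

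There is no serious obstacle here: the Noetherian hypothesis is precisely what is needed to upgrade the conclusion of Corollary \ref{q0} (which is $\dim R=0$) into the Artinian condition that Theorem \ref{art} characterizes. The slight care point is to remember that the implications from $(1)$ to $(2)$ and $(3)$, and the implication $(2)\Rightarrow(3)$, hold with no hypothesis on $R$ at all, so the only role played by the Noetherian assumption is in the final step $(3)\Rightarrow(1)$ via Akizuki--Hopkins.
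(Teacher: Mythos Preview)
Your proof is correct and follows essentially the same route as the paper: use Corollary \ref{q0} to get $\dim(R)=0$, combine with the Noetherian hypothesis to conclude $R$ is Artinian, and then invoke Theorem \ref{art}. The paper phrases it slightly more compactly by noting that (2) and (3) are already equivalent unconditionally via Theorem \ref{conn}, so it suffices to observe that any one of the three conditions forces $R$ to be Artinian under the Noetherian hypothesis; but this is only a cosmetic difference from your cycle-closing via $(3)\Rightarrow(1)$.
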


\begin{proof}
We know that (2) and (3) are equivalent under any conditions and that condition (1) implies (2) and (3). Suppose that $R$ is Noetherian and any one of the conditions hold. By Corollary \ref{q0}, $R$ is $0-$dimensional. So if we add ``Noetherian" as a hypothesis, then $R$ is Artinian and all three conditions hold.
\end{proof}

We conclude this section with an example of a ring $R$ for which $GA_0(R)$ is not connected, but $GA_i(R)$ is connected for $i=1,2$.

\begin{ex}
Consider the ring

\[
R:=\mathbb{F}[x_1,x_2,x_3,\cdots]/(\{x_ix_j\}_{i,j\geq 1}).
\]

This ring is not Noetherian (and hence not Artinian) so $GA_0(R)$ is not connected. On the other hand, $GA_1(R)$ and $GA_2(R)$ are connected. To see this, note that the unique maximal ideal of $\mathfrak{M}\subseteq R$ (generated by the image of the elements $x_i$) has the property that $\mathfrak{M}^2=0$. Hence every nonzero ideal of $R$ is connected by an edge to $(0)$ in both $GA_1(R)$ and $GA_2(R)$.
\end{ex}

\section{Class Structure: $GCl_{int}(R)$ and $GCl(R)$}

For the graphs $GCl(R)$ and $GCl_{int}(R)$ we assume that $R$ is an integral domain with quotient field $K$ unless specified otherwise.

It should be noted that in the case that $R$ is an integral domain, $GCl_{int}(R)$ is a variant on
the so-called divisor graph of an integral domain studied in \cite{BC2015}, where the ideals
$I$ and $J$ are assumed to be principal and possess an edge between them if $I=Ja$ where $a\in
R$ is irreducible.

For this section, it will also be useful to keep in mind that $GCl_{int}(R)$ is a subgraph of $GCl(R)$.

\begin{thm}\label{g4}
The following conditions are equivalent.
\begin{enumerate}
\item $GCl(R)$ is connected.
\item $GCl(R)$ is complete.
\item $R$ is a PID
\end{enumerate}
\end{thm}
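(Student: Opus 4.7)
The plan is to establish the cycle $(3) \Rightarrow (2) \Rightarrow (1) \Rightarrow (3)$, with the last implication being the substantive one. The whole argument hinges on a single observation: if two distinct nonzero ideals $I, J \subseteq R$ are joined by an edge in $GCl(R)$, so that $I = Jk$ for some $k \in K^{*}$, then $I$ is principal if and only if $J$ is principal. Indeed, if $J = (b)$ with $b \in R \setminus \{0\}$, then $I = Jk = (bk)$, and the containment $I \subseteq R$ forces $bk \in R$, so $I$ is principal; the reverse direction follows by replacing $k$ with $k^{-1}$.

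For $(1) \Rightarrow (3)$, I would note that $R = (1)$ is a principal ideal and (being nonzero) is a vertex of $GCl(R)$. If $GCl(R)$ is connected, then every nonzero ideal $J$ of $R$ admits a finite edge-path to $R$, and applying the observation inductively along this path shows that $J$ is principal. Hence every nonzero ideal of $R$ is principal, so $R$ is a PID. For $(3) \Rightarrow (2)$, assume $R$ is a PID and let $I = (a)$ and $J = (b)$ be distinct nonzero ideals with $a,b \in R \setminus \{0\}$. Setting $k := a/b \in K^{*}$ yields $I = Jk$, producing an edge between $I$ and $J$; hence $GCl(R)$ is complete. Finally $(2) \Rightarrow (1)$ is immediate from the definition of completeness.

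The proof is short; the only real subtlety to be careful about is the direction of the generator identification, and in particular that the containment $I \subseteq R$ (or $J \subseteq R$) ensures the resulting generator $bk$ (respectively $ak^{-1}$) lies in $R$ rather than merely in $K$. Once this is pinned down, the three implications are each essentially one line.
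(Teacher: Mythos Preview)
Your proposal is correct and follows essentially the same route as the paper: both prove $(1)\Rightarrow(3)$ by connecting an arbitrary nonzero ideal to the principal ideal $R$ via a path and using that the relation $I=Jk$ with $k\in K^*$ transfers principality. The only cosmetic difference is that you phrase this as a propagation lemma applied inductively along the path, whereas the paper composes all the scalars at once to write $R=k_1k_2\cdots k_n I$ directly; the implications $(3)\Rightarrow(2)$ and $(2)\Rightarrow(1)$ are handled identically.
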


\begin{proof}
For this proof, we discard the case where $R$ is a field as all of the conditions are satisfied
vacuously.
Since any complete graph is connected, $(2)\Longrightarrow (1)$ is immediate.
For the implication $(1)\Longrightarrow (3)$, we let $I\subset R$ be an arbitrary ideal and
$R$ the unit ideal. Since $GCl(R)$ is connected,
there is a sequence of ideals connecting $R$ and $I$:

\[
R:=J_0\relbar J_1\relbar J_2\relbar\cdots\relbar J_{n-1}\relbar I:=J_n.
\]

Since the edges above are in the graph $GCl(R)$, we must have, for all $1\leq i\leq n$, nonzero $k_i\in
K$ such that $J_{i-1}=k_iJ_{i}$. Note that $R=k_1k_2\cdots k_nI$ and hence, $I$ is principal.

Finally for the implication $(3)\Longrightarrow (2)$, we let $I=aR$ and $J=bR$ be two arbitrary
ideals of $R$ (with $a,b\neq 0$). Note that $I=\frac{a}{b}J$ and hence $GCl(R)$ is complete.
\end{proof}

\begin{thm}\label{g3com}
$GCl_{int}(R)$ is complete if and only if $R$ is a Noetherian valuation domain. 
\end{thm}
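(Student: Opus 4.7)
The plan is to handle the two directions separately, with the reverse direction being the main content. The key observation is that the edge condition $I = Ja$ with $a \in R$ implies $I \subseteq J$, so in $GCl_{int}(R)$ an edge always witnesses containment in one direction.

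For the forward direction, I would use that a Noetherian valuation domain is either a field or a DVR. If $R$ is a field, $GCl_{int}(R)$ has a single vertex and is vacuously complete. If $R$ is a DVR with uniformizer $\pi$, every nonzero ideal has the form $(\pi^n)$ for some $n\geq 0$, and for distinct $m<n$ the identity $(\pi^n)=\pi^{n-m}(\pi^m)$ yields an edge. Hence $GCl_{int}(R)$ is complete.

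For the reverse direction, assume $GCl_{int}(R)$ is complete. First I would establish that $R$ is a valuation domain: given any two distinct nonzero ideals $I$ and $J$, completeness supplies a nonzero $a\in R$ with $I=Ja$ or $J=Ia$, and in either case one of $I,J$ is contained in the other. Together with the fact that the zero ideal lies below every ideal, this shows the ideals of $R$ are totally ordered by inclusion, so $R$ is a valuation domain.

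Second, I would show every proper nonzero ideal is principal by exploiting the edge between $I$ and the unit ideal $R$. Take a proper nonzero $I\subsetneq R$. Completeness gives either $I=R\cdot a=(a)$, which is the desired conclusion, or $R=I\cdot a$ for some nonzero $a\in R$. The latter produces an $i\in I$ with $ia=1$, making $i$ a unit and forcing $I=R$, which contradicts $I\subsetneq R$. So every proper nonzero ideal of $R$ is principal, hence $R$ is Noetherian. The main obstacle will be recognizing that comparing each ideal with the unit ideal via the edge relation is the efficient route; once that is seen, the remainder is a short case split and the valuation-domain step is immediate from the edge definition.
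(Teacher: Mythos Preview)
Your proof is correct, but the route for the implication ``complete $\Rightarrow$ Noetherian valuation domain'' differs from the paper's. The paper first observes that $GCl_{int}(R)$ is a subgraph of $GCl(R)$, so completeness of the former forces completeness of the latter; it then invokes the preceding theorem (Theorem~\ref{g4}) to conclude that $R$ is a PID, and finishes by showing $R$ is local via a two-maximal-ideals contradiction. You instead argue directly: the edge relation forces comparability of any two nonzero ideals, giving the valuation property outright, and then comparing an arbitrary proper ideal to the unit ideal $R$ yields principality. Your argument is self-contained and does not rely on Theorem~\ref{g4}, which makes it slightly more elementary; the paper's version has the advantage of reusing machinery already in place. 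The direction ``Noetherian valuation domain $\Rightarrow$ complete'' is handled the same way in both.
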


\begin{proof}
In this proof, we will ignore the simple case where $R$ is a field.
For the forward implication, we will assume that $GCl_{int}(R)$ is complete. As $GCl_{int}(R)$ is a
subgraph of $GCl(R)$, $GCl(R)$ must also be complete. By Theorem \ref{g4}, $R$ must be a
PID. It now suffices to show that $R$ is local. To this end, suppose that $\frak{M}_1\neq \mathfrak{M}_2$ are maximal ideals. Without loss of generality, there is a nonzero 
$a\in R$ such that $\frak{M}_1=a\frak{M}_2$. If $a$ is a unit then $\mathfrak{M}_1=\mathfrak{M}_2$ and if $a$ is a nonunit then $\mathfrak{M}_1=a\mathfrak{M}_2\subsetneq\mathfrak{M}_2$; either gives a contradiction. Hence $R$ is a local PID and hence a Noetherian valuation domain.

On the other hand, if $R$ is a Noetherian valuation domain then any two nonzero proper ideals
are of the form $(\pi^n)$ and $(\pi^m)$ where $\pi$ is a generator of the maximal ideal and
$n,m\geq 1$. If we say (without loss of generality) that $n\leq m$ then
$\pi^{m-n}(\pi^n)=(\pi^m)$ and hence $GCl_{int}(R)$ is complete.
\end{proof}

\begin{thm}
$GCl_{int}(R)$ is connected if and only if $R$ is a PID. In this case, $\text{diam}(GCl_{int}(R))\leq
2$, and $\text{diam}(GCl_{int}(R))=1$ if and only if $R$ is local.
\end{thm}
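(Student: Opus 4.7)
The plan is to leverage the two preceding theorems, since the statement sits naturally between them. I first observe that $GCl_{int}(R)$ is a spanning subgraph of $GCl(R)$: both graphs have the same vertex set (the nonzero ideals of $R$), and every edge $I = Ja$ witnessed by a nonzero $a \in R \subset K$ is also an edge of $GCl(R)$. Consequently, if $GCl_{int}(R)$ is connected then so is $GCl(R)$, and Theorem \ref{g4} forces $R$ to be a PID. This gives the forward implication essentially for free.

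For the reverse direction together with the diameter bound, I would argue by exhibiting the unit ideal $R$ as a universal hub. Assume $R$ is a PID (and, for the diameter claim to be nontrivial, not a field). Every nonzero ideal is of the form $(a)$, and the equation $(a) = R \cdot a$ witnesses an edge between $R$ and $(a)$ whenever $a$ is a non-unit, i.e.\ whenever $(a) \ne R$. Thus for any two distinct nonzero proper ideals $(a),(b)$ we obtain a path $(a) - R - (b)$ of length at most $2$, while if one of the two ideals equals $R$ we already have a direct edge. Hence $\text{diam}(GCl_{int}(R)) \leq 2$.

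The final clause I would read off Theorem \ref{g3com}: having $\text{diam}(GCl_{int}(R)) = 1$ is the same as $GCl_{int}(R)$ being complete, and that theorem identifies this with $R$ being a Noetherian valuation domain, which in our PID setting is precisely the condition that $R$ is local. To double-check the non-trivial direction, I would also verify directly that a non-local PID has diameter strictly bigger than $1$: if $(p)$ and $(q)$ are distinct maximal ideals of a PID (with $p,q$ non-associate primes), then an equation $(p) = a(q)$ would force $p$ and $q$ to be associates, a contradiction, so $(p)$ and $(q)$ are non-adjacent.

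Because the proof is largely an orchestration of Theorems \ref{g4} and \ref{g3com}, I do not anticipate any serious obstacle. The only care needed is edge-case bookkeeping: discarding the trivial case where $R$ is a field (so the vertex set collapses to a point), remembering that an edge in these graphs requires distinct endpoints, and handling the subcase where one of the two ideals being connected is already the hub $R$.
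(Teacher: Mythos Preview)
Your argument is correct and follows the same overall architecture as the paper: the forward implication via the subgraph inclusion $GCl_{int}(R)\subseteq GCl(R)$ and Theorem~\ref{g4}, and the final clause via Theorem~\ref{g3com}, are handled identically.

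The one point of divergence is the length-$2$ path you build in the reverse direction. You route every pair of ideals \emph{upward} through the fixed hub $R$, using $(a)=R\cdot a$. The paper instead routes each pair \emph{downward} through the product, using
\[
aR \;\leftrightarrow\; abR \;\leftrightarrow\; bR,
\]
where $abR=(aR)b=(bR)a$. Both are valid and yield the same diameter bound. Your hub argument is a touch cleaner here since the intermediate vertex does not depend on the pair; the paper's product route, on the other hand, is the one that generalizes directly to the Dedekind case in Theorem~\ref{ded}, where $R$ itself need not lie in the same connected component as $I$ and $J$ but $bI=aJ$ always does.
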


\begin{proof}
As $GCl_{int}(R)$ is a subgraph of $GCl(R)$, the fact that $GCl_{int}(R)$ is connected implies that
$GCl(R)$ is connected. Hence, by Theorem \ref{g4}, $R$ must be a PID.

On the other hand, if $R$ is a PID and $I=aR$ and $J=bR$ ($a,b\neq 0$) are arbitrary ideals,
then we can connect $I$ and $J$ as follows:

\[
I=aR\leftrightarrow abR\leftrightarrow bR=J.
\]

The above demonstrates the veracity of the remark that $\text{diam}(GCl_{int}(R))\leq 2$. The fact
that $\text{diam}(GCl_{int}(R))=1$ precisely when $R$ is local follows from Theorem \ref{g3com}.
\end{proof}

\begin{thm}\label{ded}
If $R$ is a Dedekind domain with quotient field $K$, then the connected components of the graphs $GCl(R)$ and $GCl_{int}(R)$ are in one to one correspondence with the elements of the class group $\text{Cl}(R)$. Each connected component of $GCl(R)$ is complete and each connected component of $GCl_{int}(R)$ has diameter no more than 2 and the connected components of $GCl_{int}(R)$ are complete if and only if $R$ is a local PID.
\end{thm}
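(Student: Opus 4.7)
The plan is to handle the two graphs in parallel, exploiting the Dedekind structure to reduce the component analysis to the class group.

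For $GCl(R)$ the edge relation (together with equality) is precisely the equivalence relation ``$I = kJ$ for some $k \in K^{*}$,'' which in a Dedekind domain is the defining equivalence for $\text{Cl}(R)$ restricted to integral ideals. Since every ideal class contains integral representatives (clear denominators in any fractional representative), the components of $GCl(R)$ are in bijection with $\text{Cl}(R)$. Transitivity of the edge relation ($I = k_1 L$ and $L = k_2 J$ give $I = (k_1 k_2) J$) directly gives completeness of each component.

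For $GCl_{int}(R)$, which is a subgraph of $GCl(R)$, I would show the components refine no further. Given two ideals $I, J$ in the same class with $I = kJ$, $k \in K^{*}$, if $k \in R$ or $k^{-1} \in R$ there is already a direct edge, while otherwise any expression $k = a/b$ with $a, b \in R$ must have both $a, b$ nonunits (else $k$ or $k^{-1}$ would lie in $R$), so the intermediate ideal $L := aJ = bI$ is a genuine third vertex: invertibility of $J$ in the Dedekind setting upgrades ``$aJ = J$'' to ``$(a) = R$'', hence $L \neq J$, and symmetrically $L \neq I$. The path $I - L - J$ then gives the diameter-$2$ bound and simultaneously establishes the bijection of components with $\text{Cl}(R)$.

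Finally, for the completeness characterization: if $R$ is a local PID (the field case being vacuous) with uniformizer $\pi$, every nonzero ideal is $(\pi^n)$ and any two distinct such are directly connected by the scalar $\pi^{|m-n|} \in R$, so the (single) component is complete. Conversely, if $R$ is not local, picking distinct maximal ideals $\mathfrak{M}_1 \neq \mathfrak{M}_2$ and elements $a \in \mathfrak{M}_1 \setminus \mathfrak{M}_2$, $b \in \mathfrak{M}_2 \setminus \mathfrak{M}_1$ produces two principal ideals $(a), (b)$ in the trivial class with $a/b, b/a \notin R$ (as can be checked by valuations at $\mathfrak{M}_2$ and $\mathfrak{M}_1$ respectively), so there is no direct edge and the trivial component fails to be complete. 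The main technical pressure point is the intermediate-ideal step: this is where the Dedekind hypothesis (specifically invertibility of nonzero ideals) genuinely enters, and the rest of the argument is bookkeeping with equivalence relations and local valuations.
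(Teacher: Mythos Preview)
Your proof is correct and follows essentially the same route as the paper's: the same intermediate ideal $L=aJ=bI$ for the diameter-$2$ bound in $GCl_{int}(R)$, and the same uniformizer argument for the local-PID direction. The one genuine variation is in the converse of the completeness statement: you work in the trivial class with the principal ideals $(a),(b)$ and rule out a direct edge via valuations, whereas the paper picks an arbitrary ideal $I$ in a supposedly complete component and compares $m_1I$ with $m_2I$. Your version suffices for the theorem as stated (it exhibits one non-complete component), while the paper's argument incidentally proves the slightly stronger fact that \emph{no} component can be complete unless $R$ is local; either way the logic is the same and the difference is cosmetic.
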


\begin{proof}
If $R$ is a Dedekind domain with quotient field $K$, then two ideals, $I$ and $J$, are in the same class of $\text{Cl}(R)$ if and only if $I=Jk$ for some nonzero $k\in K$. Hence each connected component of $GCl(R)$ is complete and these components are in one to one correspondence with the elements of $\text{Cl}(R)$.

For the $GCl_{int}(R)$ case, we first note that if there is a path from $I$ to $J$ then there must be some nonzero $k\in K$ such that $I=Jk$, so it remains to show that if $I$ and $J$ are in the same ideal class, then there is a path connecting them. To this end, we note that if $I=Jk$ for some nonzero $k\in K$, we can write $I=\frac{a}{b}J$ and in a similar fashion as before, we can connect $I$ and $J$ via

\[
I\leftrightarrow  bI=aJ\leftrightarrow J,
\]

\noindent and hence there is a path of length no more than 2 connecting $I$ and $J$. 

For the final statement, we first suppose that $R$ is a local PID (and hence a Noetherian valuation domain) with uniformizer $\pi$. If $I$ and $J$ are in the same class of $\text{Cl}(R)$ then $I=kJ$ for some nonzero $k\in K$. Since $K$ is the quotient field of a Noetherian valuation domain, we can write $k=\pi^n$ with $n\in\mathbb{Z}$ up to a unit in $R$. If $n\geq 0$ then $I=J$ or there is an edge between them. If $n<0$ then $J=\pi^{-n}I$ and again there is an edge between $I$ and $J$.

On the other hand, suppose that some connected component of $GCl_{int}(R)$ is complete and that there are two maximal ideals $\mathfrak{M}_1, \mathfrak{M}_2\subset R$. Select $m_1\in\mathfrak{M}_1\setminus\mathfrak{M}_2$ and $m_2\in\mathfrak{M}_2\setminus\mathfrak{M}_1$. If $I$ is in a complete connected component of  $GCl_{int}(R)$, then there is a path from $m_1I$ to $m_2I$ and hence there must be an edge between them. Hence we have $m_1\vert m_2$ or $m_2\vert m_1$ and in either case, we obtain a contradiction. We conclude that $R$ is local and hence a PID (Noetherian valuation domain).
\end{proof}

In the spirit of these results, we further restrict the set of vertices to make a more general observation. We define $GCl_{inv}(R)$ to be the subgraph of $GCl(R)$ with the vertex set restricted to the set of invertible ideals. Recall that if $R$ is a domain with quotient field $K$, then $R$ is seminormal if for all $\omega\in K$, $\omega^2,\omega^3\in R$ implies that $\omega\in R$ (see \cite{swan} for a good reference on this topic). The next result shows that if $R$ is a seminormal domain then the number of connected components in $GCl_{inv}(R)$ is stable for polynomial extensions.

\begin{thm}
If $R$ is a seminormal domain, then the number of connected components of $GCl_{inv}(R)$ is equal to the number of connected components of \\ $GCl_{inv}(R[x_1,x_2,\cdots,x_n])$ for all $n\geq 1$.
\end{thm}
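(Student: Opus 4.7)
The plan is to identify the connected components of $GCl_{inv}(R)$ with the Picard group $\Pic(R)$ and then invoke the classical Traverso--Swan theorem on seminormality.

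First, I would observe that the defining adjacency relation on $GCl_{inv}(R)$, enlarged by equality, is already an equivalence relation on the set of invertible ideals: reflexivity comes from $k=1$, symmetry from $I = kJ \iff J = k^{-1}I$ (and $k^{-1} \in K^*$), and transitivity from composing multipliers. Hence two invertible ideals $I$ and $J$ lie in the same connected component of $GCl_{inv}(R)$ if and only if $I = kJ$ for some nonzero $k \in K$. For invertible fractional ideals this is precisely the condition that $I$ and $J$ are isomorphic as $R$-modules (any $R$-isomorphism $I \to J$ extends, after tensoring with $K$, to multiplication by a unique $k \in K^*$, and conversely each such $k$ provides an isomorphism); and every invertible fractional ideal is $R$-isomorphic to an integral one obtained by clearing denominators. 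Therefore the set of connected components of $GCl_{inv}(R)$ is in natural bijection with $\Pic(R)$.

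Next, I would apply the identical argument verbatim to the polynomial ring $R[x_1,\ldots,x_n]$, whose quotient field is $K(x_1,\ldots,x_n)$, to deduce that the set of connected components of $GCl_{inv}(R[x_1,\ldots,x_n])$ is in bijection with $\Pic(R[x_1,\ldots,x_n])$. With both sides of the desired equality now interpreted as cardinalities of Picard groups, I would invoke the Traverso--Swan theorem (see \cite{swan}), which asserts that $R$ is seminormal if and only if the natural map $\Pic(R) \to \Pic(R[x_1,\ldots,x_n])$ is an isomorphism for all $n \geq 1$. Combining this isomorphism with the two bijections already established yields the theorem.

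The main obstacle is almost entirely bookkeeping rather than substantive mathematics: one must carefully verify that the notion of adjacency defined via multiplication by some $k \in K^*$ captures the full $R$-module isomorphism class of an invertible ideal (and not some strictly finer or coarser equivalence), and that the bijections on components are compatible with the functorial map $\Pic(R) \to \Pic(R[x_1,\ldots,x_n])$ induced by the inclusion $R \hookrightarrow R[x_1,\ldots,x_n]$. The deep content---that seminormality is exactly what forces this map to be an isomorphism---is supplied entirely by the cited Traverso--Swan theorem, so the argument reduces the claim to a known and highly nontrivial result rather than attempting a direct combinatorial proof.
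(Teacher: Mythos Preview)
Your proposal is correct and follows essentially the same approach as the paper: both identify the connected components of $GCl_{inv}(R)$ with $\Pic(R)$ (the paper does this by pointing back to the proof of Theorem~\ref{ded}) and then invoke the stability of the Picard group under polynomial extensions for seminormal domains. The only cosmetic difference is that the paper cites \cite{GH1980} for $\Pic(R)\cong\Pic(R[x])$ together with \cite{swan} for preservation of seminormality under polynomial extension (and then iterates), whereas you invoke the Traverso--Swan characterization directly for all $n$; these amount to the same thing.
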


\begin{proof}
This result follows from the observations that in the integral domain case $\text{Pic}(R)$ is isomorphic to $\text{Cl}(R)$ and $\text{Pic}(R)\cong\text{Pic}(R[x])$ (\cite{GH1980}) and the fact that a polynomial extension of a seminormal domain is seminormal (\cite{swan}). With these results in hand, the fact that we are restricting the vertex set to the set of invertible ideals makes the rest of the proof almost identical to the proof of Theorem \ref{ded}.
\end{proof}

\section{$GZ(R)$ and the classical zero-divisor graph}

In this section, our attention will be devoted to the graph
$GZ(R)$ and some of its variants. The reason for excluding the zero ideal is precisely the same as the reason that this exception was made in \cite{andersonlivingston}: namely because the use
of the zero ideal gives extra structure to this graph with no useful new information. It is
easy to see that for any commutative ring with identity (even an integral domain) that the
graph $GZ(R)$ is connected with diameter no more than $2$ if we allow use of the zero ideal.
Indeed, if $I$ and $J$ are arbitrary ideals, then $I\leftrightarrow (0)\leftrightarrow J$ is a path connecting
them. So if $R$ is an (infinite) integral domain, the graph $GZ(R)$ would be an infinite star graph with
the zero ideal at the center. These extra connections muddy the waters and give no useful
insights for our current purposes.

The first variant that we will consider is the graph $GZ^*(R)$. For this graph, the set of vertices is the collection of nonzero ideals $I\subset R$ such that there is a nonzero ideal $J\subset R$ such that $IJ=0$. As before, we say that $I$ and $J$ have an edge between them if $IJ=0$. Of course, if $R$ is a domain, this produces the empty graph, so in this situation, $R$ must have nontrivial zero divisors for this graph to be of any interest.

We begin with a well-known lemma concerning annihilators that we record here for completeness with proof omitted.

\begin{lem}\label{ann}
Let $I\subseteq J$ be ideals in $R$.
\begin{enumerate}
\item $\text{Ann}(J)\subseteq\text{Ann}(I)$.
\item $I\subseteq\text{Ann}(\text{Ann}(I))$.
\item $\text{Ann}(I)=\text{Ann}(\text{Ann}(\text{Ann}(I)))$.
\end{enumerate}
\end{lem}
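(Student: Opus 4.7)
The plan is to prove the three parts in order, each being a direct unwinding of the definition of the annihilator together with its inclusion-reversing behavior. For part (1), I would pick any $x\in\text{Ann}(J)$, observe that $xJ=0$, and then use $I\subseteq J$ to conclude $xI\subseteq xJ=0$, so $x\in\text{Ann}(I)$. This is a one-line containment argument.

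For part (2), I would fix an arbitrary $a\in I$ and verify that $a$ kills every element of $\text{Ann}(I)$. If $y\in\text{Ann}(I)$, then $yI=0$, and since $a\in I$, in particular $ya=0$. Thus $a\cdot\text{Ann}(I)=0$, which is exactly the statement $a\in\text{Ann}(\text{Ann}(I))$, and so $I\subseteq\text{Ann}(\text{Ann}(I))$.

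For part (3), I would deduce both inclusions from (1) and (2). Applying (2) with the ideal $\text{Ann}(I)$ in place of $I$ gives $\text{Ann}(I)\subseteq\text{Ann}(\text{Ann}(\text{Ann}(I)))$. For the reverse containment, (2) gives $I\subseteq\text{Ann}(\text{Ann}(I))$, and applying the inclusion-reversing property from (1) to this containment yields $\text{Ann}(\text{Ann}(\text{Ann}(I)))\subseteq\text{Ann}(I)$. Combining the two containments gives equality.

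There is no genuine obstacle to overcome here; the statement is really the observation that $\text{Ann}\circ\text{Ann}$ is a closure operator on ideals, so a third application of $\text{Ann}$ recovers the first. The only care required is to apply (1) and (2) to the correct ideals in (3) and to keep track of which direction of containment each produces, which is a bookkeeping matter rather than a mathematical one.
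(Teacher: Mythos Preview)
Your proposal is correct and matches the paper's approach exactly. The paper actually omits the proof in the text (stating the lemma is well known), but the commented-out proof in the source proceeds identically: element-chasing for (1) and (2), then deducing both containments in (3) by applying (2) to $\text{Ann}(I)$ for one direction and applying (1) to the containment $I\subseteq\text{Ann}(\text{Ann}(I))$ for the other.
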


\begin{comment}
\begin{proof}
For $(1)$, we assume that $z\in\text{Ann}(J)$. So for all $j\in J,\ zj=0$ and hence $z\in\text{Ann}(I)$.

For the second assertion, let $a\in I$ and $z\in\text{Ann}(I)$. Since $az=0,\ a\in\text{Ann}(\text{Ann}(I))$.

The last assertion is a combination of the previous two. $(2)$ shows that $\text{Ann}(I)\subseteq\text{Ann}(\text{Ann}(\text{Ann}(I)))$. Additionally since (again by $(2)$) $I\subseteq\text{Ann}(\text{Ann}(I))$, we apply $\text{Ann}$ and $(1)$ to see that $\text{Ann}(\text{Ann}(\text{Ann}(I)))\subseteq\text{Ann}(I)$ and this concludes the proof.
\end{proof}
\end{comment}

The next theorem reminiscent of Theorem \ref{mrb} from \cite{andersonlivingston} couched in an ideal-theoretic setting. This theorem can also be found in \cite{br2011} and may be considered a consequence of \cite[Theorem 11]{dd2005}.  This will be used to leverage some further insights and to provide an alternate proof of the first statement in \cite[Theorem 2.3]{andersonlivingston}.

\begin{thm}\label{gen}
Let $R$ be a commutative ring with 1, then $GZ^*(R)$ is connected and has diameter no more than 3.
\end{thm}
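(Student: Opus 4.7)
The plan is to mirror the classical Anderson--Livingston argument from \cite{andersonlivingston} at the level of ideals, exploiting the associativity and commutativity of ideal multiplication: for any ideals $A, B, I$ one has $(AB)I = B(AI) = A(BI)$, so annihilators combine freely.

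Let $I \neq J$ be vertices of $GZ^*(R)$. If $IJ = 0$ there is nothing to show, so assume $IJ \neq 0$ and set $A := \ann(I)$ and $B := \ann(J)$. Both are nonzero by the definition of a vertex, and each is itself a vertex, since $AI = BJ = 0$ with $I, J$ nonzero. The crucial observation is that $AB$ annihilates both $I$ and $J$: $(AB)I = B(AI) = 0$ and $(AB)J = A(BJ) = 0$.

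I would first handle the case $AB \neq 0$. Then $AB$ is a candidate common neighbor, and I would rule out $AB = I$ (which would force $IJ = (AB)J = 0$, contradicting our assumption) and symmetrically $AB \neq J$, producing the path $I - AB - J$ of length $2$. When $AB = 0$, the argument splits into subcases. If $A = B$, then $A^2 = 0$ and $A$ is a common annihilator distinct from $I$ and $J$ (otherwise $IJ = 0$), giving a path of length $2$. If $A \neq B$ with $I \neq A$ and $J \neq B$, the remaining identifications $I = B$ and $J = A$ are both excluded by $IJ \neq 0$, so $I, A, B, J$ are pairwise distinct and $I - A - B - J$ is a path of length $3$. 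The degenerate subcase $I = A$ (equivalently $I^2 = 0$ with $I$ self-annihilating) yields $IB = AB = 0$ with $I \neq B$ (since $A \neq B$) and $J \neq B$ (else $IJ = AB = 0$), giving $I - B - J$ of length $2$; the symmetric case $J = B$ is handled identically.

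The hard part is not any single subtle step but the bookkeeping: in every branch one must verify that the putative intermediate ideals are truly distinct from $I$ and $J$, and this is precisely where the standing hypothesis $IJ \neq 0$ is deployed, repeatedly ruling out degenerate configurations in which an annihilator coincides with one of the endpoints. Once these verifications are made, both connectedness and the bound $\mathrm{diam}(GZ^*(R)) \leq 3$ follow.
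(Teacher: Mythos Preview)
Your argument is correct and complete; the bookkeeping in each subcase checks out, and you have not missed any degenerate configuration.

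Your route differs from the paper's in the choice of intermediate ideals. With $A=\ann(I)$ and $B=\ann(J)$, you use the product $AB$ as the candidate common neighbor and then, when $AB=0$, build the length-$3$ path $I-A-B-J$ (with several distinctness checks). The paper instead uses the mixed products $J\cdot\ann(I)$ and $I\cdot\ann(J)$: when $J\,\ann(I)\neq 0$ it produces the path $I\leftrightarrow J\,\ann(I)\leftrightarrow\ann(J)\leftrightarrow J$, and symmetrically; when both vanish one gets $\ann(I)=\ann(J)$ directly, giving $I\leftrightarrow\ann(I)\leftrightarrow J$. The paper's decomposition is a bit slicker because the $\leftrightarrow$ convention (edge-or-equal) absorbs all the distinctness verifications you carry out by hand, and the final case collapses to a single equality of annihilators rather than a branching on $A=B$ versus $A\neq B$. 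Your version, on the other hand, is a more literal transcription of the classical element-level Anderson--Livingston proof to the ideal setting, which has its own expository value since the paper then \emph{derives} the Anderson--Livingston theorem as a corollary.
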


\begin{proof}
Let $I$ and $J$ be distinct nonzero ideals of $R$ such that there are ideals $0\neq I_1, J_1\subset R$ with $II_1=0=JJ_1$. As before, we use the notation ``$X \leftrightarrow Y$" to mean that $X=Y$ or that there is an edge between $X$ and $Y$.

We first note that if $J\text{Ann}(I)\neq 0$ then we have the path

\[
I\leftrightarrow J\text{Ann}(I)\leftrightarrow\text{Ann}(J)\leftrightarrow J
\]

\noindent and similarly, if $I\text{Ann}(J)\neq 0$ we have the path

\[
I\leftrightarrow\text{Ann}(I)\leftrightarrow I\text{Ann}(J)\leftrightarrow J
\]

So the only case to consider is the case where $J\text{Ann}(I)=I\text{Ann}(J)=0$. Note that this implies that $\text{Ann}(I)\subseteq\text{Ann}(J)$ and $\text{Ann}(J)\subseteq\text{Ann}(I)$. Since $\text{Ann}(I)=\text{Ann}(J)$ we have the path

\[
I\leftrightarrow\text{Ann}(I)\leftrightarrow J.
\]
\end{proof}

We now recover the famous result of D. F. Anderson and P. Livingston.

\begin{cor}[D. F. Anderson and P. Livingston,\cite{andersonlivingston}]
If $Z(R)$ is the zero divisor graph of a commutative ring with identity, then $Z(R)$ is connected with diameter no more than 3.
\end{cor}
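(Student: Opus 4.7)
The plan is to reduce the element-level statement to the ideal-level Theorem \ref{gen}. Given distinct nonzero zero-divisors $a, b \in R$, I would associate to each the principal ideal it generates. Since $a$ and $b$ are nonzero zero-divisors, the ideals $(a)$ and $(b)$ are nonzero and each admits a nonzero annihilator, so both are vertices of $GZ^*(R)$.

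In the main case, suppose $(a) \neq (b)$. By Theorem \ref{gen} there is a sequence $(a) = I_0, I_1, \ldots, I_n = (b)$ in $GZ^*(R)$ with $n \leq 3$ and $I_k I_{k+1} = 0$ for each $k$. I would pick any nonzero $a_k \in I_k$ for $1 \leq k \leq n-1$, and set $a_0 = a$, $a_n = b$. Then $a_k a_{k+1} \in I_k I_{k+1} = 0$ for every $k$, so $a_0, a_1, \ldots, a_n$ is a walk of length at most $3$ in $Z(R)$ from $a$ to $b$; any shortest subwalk gives a path of length at most $3$.

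The remaining case is $(a) = (b)$ with $a \neq b$, which I would dispatch directly. Choose a nonzero $c \in R$ with $ac = 0$; since $b \in (a)$, also $bc = 0$. If $c \notin \{a, b\}$, then $a - c - b$ is a path of length $2$ in $Z(R)$. If $c = a$, then $a^2 = 0$, and writing $b = ra$ yields $ab = ra^2 = 0$, so $a$ and $b$ are adjacent; the case $c = b$ is symmetric.

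The main obstacle is the mild bookkeeping involved in lifting a walk in $GZ^*(R)$ to a walk in $Z(R)$, since the intermediate representatives $a_k$ chosen from $I_k$ could a priori coincide with each other or with $a$ or $b$. This is handled automatically by extracting a path from any walk, so the bound $3$ is preserved. The degenerate case $(a)=(b)$ requires an independent but short argument, which is the only genuinely new work beyond invoking Theorem \ref{gen}.
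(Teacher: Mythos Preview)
Your proof is correct and follows essentially the same strategy as the paper: reduce to Theorem~\ref{gen} via the principal ideals $(a)$ and $(b)$, lift the ideal path to an element walk, and handle the degenerate case $(a)=(b)$ by a short direct argument identical to the paper's. The only minor difference is that the paper reruns the proof of Theorem~\ref{gen} with all ideals restricted to principal ones (observing that principal subideals may be chosen inside each annihilator), whereas you invoke Theorem~\ref{gen} as a black box and then select arbitrary nonzero representatives from the possibly non-principal intermediate ideals; your route is marginally more direct since it treats the theorem as a finished statement rather than revisiting its proof.
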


\begin{proof}
In the proof of Theorem \ref{gen}, we now restrict to the case where $I$ and $J$ are principal (and note that principal subideals can be chosen from within each respective annihilator). This shows that the graph of {\it principal ideals} that are generated by zero-divisors satisfy the conclusion of the corollary. To see that the slightly different statement formulated by Anderson and Livingston holds, just note that if $a,b\in R$ are distinct nonzero zero-divisors such that $(a)\neq (b)$, then there is a path among nonzero principal ideals of the form

\[
(a)\leftrightarrow (x)\leftrightarrow (y)\leftrightarrow (b)
\]

\noindent or

\[
(a)\leftrightarrow (x)\leftrightarrow (b).
\]

This yields the path $a\leftrightarrow x\leftrightarrow y\leftrightarrow b$ or $a\leftrightarrow x\leftrightarrow b$. In the case $(a)=(b)$, we note that there is an element $0\neq c$ such that $ac=0=bc$ giving the path $a\leftrightarrow c\leftrightarrow b$ unless $a=c$ (without loss of generality). But in this case, $a^2=0=ab$ and we are done.
\end{proof}

We now return to $GZ(R)$. Note that if $R$ contains a regular element $x$ then the ideal $(x)$ is an isolated vertex (and so, in general, one does not expect this graph to be connected). We highlight a case where connection is forced.

\begin{thm}
Let $R$ be a commutative ring with identity that is not a field. If $GA_1(R)$ or $GA_2(R)$ is connected, then so is $GZ(R)$. Additionally
$\text{diam}(GZ(R))\leq 3$.
\end{thm}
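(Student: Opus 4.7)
The plan is to reduce to Theorem \ref{gen} by showing that, under the hypothesis, every nonzero proper ideal of $R$ has a nonzero annihilator, so that $GZ(R)$ in fact coincides with $GZ^*(R)$. Once that reduction is made, connectivity and the diameter bound are automatic from Theorem \ref{gen}.

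First, I would feed the hypothesis into Theorem \ref{conn} to extract a finite list of (not necessarily distinct) maximal ideals $\mathfrak{M}_1,\dots,\mathfrak{M}_n$ with $\mathfrak{M}_1\cdots\mathfrak{M}_n=0$, and then take $n$ to be the minimal length of such a vanishing product. By this minimality, any product of fewer than $n$ maximals drawn from this list (in any order) is nonzero. Simultaneously, Corollary \ref{q0} tells us that $R$ is $0$-dimensional and semiquasilocal and that $\{\mathfrak{M}_1,\dots,\mathfrak{M}_n\}$ in particular contains every maximal ideal of $R$.

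Next, let $I$ be an arbitrary nonzero proper ideal. Since $R$ is not a field and $I$ is proper, $I$ lies in some maximal ideal of $R$, which, after reindexing, we may assume is $\mathfrak{M}_1$. Set $J:=\mathfrak{M}_2\cdots\mathfrak{M}_n$. Then
\[
IJ\subseteq \mathfrak{M}_1\mathfrak{M}_2\cdots\mathfrak{M}_n=0,
\]
while $J\neq 0$ by the minimality of $n$. Hence $\operatorname{Ann}(I)\supseteq J\neq 0$, so $I$ is a vertex of $GZ^*(R)$. As this holds for every nonzero proper ideal of $R$, the vertex sets of $GZ(R)$ and $GZ^*(R)$ agree; since the edge condition $IJ=0$ is the same for both graphs, the two graphs coincide. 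Theorem \ref{gen} then gives connectivity of $GZ^*(R)$ with diameter at most $3$, and this transfers directly to $GZ(R)$.

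The main obstacle is the middle step: ensuring that $\operatorname{Ann}(I)\neq 0$ for every nonzero proper $I$. This hinges on using the minimality of $n$ in concert with the fact (from Corollary \ref{q0}) that the vanishing product of maximals must contain every maximal ideal of $R$, so that $I$ can always be placed inside one of the factors and the complementary product is guaranteed to be nonzero. Once this structural observation is in hand, everything else is just bookkeeping that lets us invoke the already-proven Theorem \ref{gen}.
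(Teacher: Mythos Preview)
Your proof is correct and takes a genuinely different route from the paper's. Both arguments begin identically: invoke Theorem \ref{conn} to obtain maximal ideals $\mathfrak{M}_1,\dots,\mathfrak{M}_n$ with $\mathfrak{M}_1\cdots\mathfrak{M}_n=0$ and no proper subproduct vanishing, and note that any proper ideal must lie in one of the $\mathfrak{M}_i$. From there they diverge. The paper builds an explicit path of length at most $3$ between given ideals $I\subseteq\mathfrak{M}_1$ and $J\subseteq\mathfrak{M}_k$, namely
\[
I\leftrightarrow\mathfrak{M}_2\cdots\mathfrak{M}_n\leftrightarrow\mathfrak{M}_1\cdots\mathfrak{M}_{k-1}\mathfrak{M}_{k+1}\cdots\mathfrak{M}_n\leftrightarrow J
\]
(collapsing to length $2$ when $k=1$). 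You instead use the same structural data to show every nonzero proper ideal has a nonzero annihilator, conclude $GZ(R)=GZ^*(R)$, and then simply quote Theorem \ref{gen}. Your reduction is economical and exhibits the result as a special case of the general annihilating-ideal-graph theorem already proved; the paper's version is self-contained (it does not rely on Theorem \ref{gen}) and makes the path explicit in terms of the $\mathfrak{M}_i$. One small point worth stating outright: the hypothesis that $R$ is not a field is exactly what forces $n\geq 2$, so that $\mathfrak{M}_2\cdots\mathfrak{M}_n$ is a nonempty (hence well-defined) product; this is the actual role of that hypothesis, not the fact that a proper ideal sits in a maximal one.
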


\begin{proof}
By Theorem \ref{conn} there is a collection of not necessarily distinct maximal ideals
$\{\mathfrak{M}_1,\mathfrak{M}_2,\cdots ,\mathfrak{M}_n\}$ such that
$\mathfrak{M}_1\mathfrak{M}_2\cdots \mathfrak{M}_n=0,\ n\geq 2$. We can also assume that no proper
subproduct of these listed ideals is zero.

If $I,J\subset R$ then there is a maximal ideal $\mathfrak{M}$ containing $I$. Since
$\mathfrak{M}\supseteq I\supset 0=\mathfrak{M}_1\mathfrak{M}_2\cdots \mathfrak{M}_n$,
$\mathfrak{M}$ must be $\mathfrak{M}_i$ for some $1\leq i\leq n$. We will assume without loss
of generality that $I\subseteq\mathfrak{M}_1$ and $J\subseteq\mathfrak{M}_k,\ 1\leq k\leq n$. To see that
$GZ(R)$ is connected with diameter no more than three consider the path for the case where $k\neq 1$:

\[
I\leftrightarrow\mathfrak{M}_2\mathfrak{M}_3\cdots\mathfrak{M}_n\leftrightarrow\mathfrak{M}_1\mathfrak{M}_2\cdots\mathfrak{M}_{k-1}\mathfrak{M}_{k+1}\cdots\mathfrak{M}_{n}\leftrightarrow J
\]

In the case that $k=1$, we modify our path as follows:

\[
I\leftrightarrow\mathfrak{M}_2\mathfrak{M}_3\cdots\mathfrak{M}_n\leftrightarrow J.
\]
\end{proof}

We conclude this section with an example to show that $GZ(R)$ may be connected without $GA_1(R)$, $GA_2(R)$ being connected.

\begin{ex}
Let $\mathbb{F}$ be a field and consider first the domain $R:=\mathbb{F}[x_1, x_2,\cdots, x_n,\cdots, y]$, let $I\subset R$ be the ideal $I:=(\{x_i^2, x_iy,y^2\}_{i\geq 1})$, and let $T:=R/I$ (we abuse the notation by now thinking of the elements $y, x_i$ as elements of $T$). $T$ is quasilocal, with maximal ideal $\mathfrak{M}$. It is easy to see that $GZ(T)$ is connected. Indeed if $I, J\subset T$ then we have the path $I\leftrightarrow (y)\leftrightarrow J$. But there is no collection of maximal ideals with product $0$ since for all $k\geq 1,\ 0\neq x_1x_2\cdots x_k\in\mathfrak{M}^k$. 
\end{ex}
%%%%%%%%%%%%%%%%%%%%%%%%%%%%%
\section{Finite Containment: $GF(R)\text{ and }GP(R)$}

In this section we investigate graphs of rings where the edges are defined to highlight finite (or perhaps principal) generation of an ideal over a subideal that it contains. In these graphs, the vertices will be the set of proper ideals and edges between ideals will be defined by finite generation. Specifically, we will declare that the ideals $I$ and $J$ have an edge between them if $I\subset J$ and $J$ is finitely (resp. principally) generated over $I$; that is, $J=(I,x_1,x_2,\cdots, x_n)$ (resp. $J=(I,x)$).

The algebraic motivation for this definition is an attempt to measure properties of $\text{MaxSpec}(R)$ in a condition that (under certain graphical constraints) mimics the Noetherian condition.

This first lemma is very easy and is presented to connect our definition of finite generation of one ideal over another to what was presented in Definition \ref{fcg}. We record it for completeness and omit the proof.

\begin{lem}
If $I\subseteq J\subset R$ be ideals of $R$, then $J/I$ is a finitely generated ideal of $R/I$ if and only if $J$ is finitely generated over $I$.
\end{lem}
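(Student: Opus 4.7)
The plan is to prove both directions directly by passing between generators in $J$ and their images in $R/I$ via the canonical surjection $\pi\colon R\to R/I$. This is essentially a bookkeeping exercise, well within the spirit of the Lattice Isomorphism Theorem (compare with Proposition \ref{mod}), and I expect no genuine obstacle. The main content is simply that the correspondence $K \mapsto K/I$ between ideals of $R$ containing $I$ and ideals of $R/I$ respects finite generation of the relevant ideals.

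First, for the forward direction I would assume $J/I$ is finitely generated as an ideal of $R/I$, say $J/I = (\bar{x}_1, \bar{x}_2, \ldots, \bar{x}_n)$ where each $\bar{x}_k = x_k + I$ with $x_k \in J$. Given any $j \in J$, its image $\bar{j} \in J/I$ has the form $\bar{j} = \sum_{k=1}^n \bar{r}_k \bar{x}_k$ for some $r_k \in R$, so $j - \sum r_k x_k \in I$. This shows $J = (I, x_1, \ldots, x_n)$, i.e.\ $J$ is finitely generated over $I$.

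Conversely, if $J = (I, x_1, \ldots, x_n)$ for some $x_1, \ldots, x_n \in R$ (which lie in $J$), then applying $\pi$ gives $J/I = \pi(J) = (\pi(I), \pi(x_1), \ldots, \pi(x_n)) = (\bar{x}_1, \ldots, \bar{x}_n)$ since $\pi(I) = 0$ in $R/I$. Thus $J/I$ is a finitely generated ideal of $R/I$, completing the equivalence. The only subtlety worth flagging is that one must choose the generators $x_k$ inside $J$ (which is always possible, since cosets $\bar{x}_k \in J/I$ have representatives in $J$), so the two notions of ``finite generation over $I$'' coincide on the nose.
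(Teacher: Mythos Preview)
Your proof is correct and entirely standard. The paper itself omits the proof of this lemma (it is stated as a bookkeeping fact and explicitly left unproved), so there is no approach to compare against; what you have written is exactly the natural argument one would supply.
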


Here are some preliminary containments of note.

\begin{prop}
$GA_0(R)$ is a subgraph of $GP(R)$ and $GP(R)$ is a subgraph of $GF(R)$.
\end{prop}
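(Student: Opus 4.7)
The plan is to prove both subgraph containments by direct unpacking of the definitions, noting that all three graphs share the same vertex set (the proper ideals of $R$), so the containment reduces to an edge-set comparison.

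For the containment $GA_0(R) \subseteq GP(R)$, I would start with distinct vertices $I$ and $J$ having an edge in $GA_0(R)$, so that (without loss of generality) $I \subsetneq J$ are adjacent. Pick any $x \in J \setminus I$. The ideal $(I,x)$ satisfies $I \subsetneq (I,x) \subseteq J$, and adjacency forces $(I,x)=J$. Hence $J$ is principally generated over $I$, or equivalently $J/I$ is a principal ideal of $R/I$, which is precisely the edge condition for $GP(R)$.

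For the containment $GP(R) \subseteq GF(R)$, the argument is essentially tautological: an edge in $GP(R)$ is witnessed by $I \subsetneq J$ with $J=(I,x)$, and any such $J$ is finitely generated over $I$ (with a single generator), giving an edge in $GF(R)$.

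There is no real obstacle here; the only point worth flagging is the small observation that picking a single $x \in J \setminus I$ and using the absence of ideals strictly between $I$ and $J$ is enough to upgrade the purely order-theoretic notion of adjacency to the algebraic statement that the extension is principal. Both statements can be written in a few lines.
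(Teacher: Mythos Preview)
Your proposal is correct and matches the paper's proof essentially line for line: the paper likewise notes that only the first containment requires argument, picks $x\in J\setminus I$, and uses adjacency to conclude $J=(I,x)$. The second containment is dismissed as immediate in the paper just as you do.
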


\begin{proof}
Only the first statement needs proof. Suppose that $I\subsetneq J\subsetneq R$ are adjacent ideals (so there is an edge between them in $GA_0(R)$). If $x\in J\setminus I$ then $J=(I,x)$ by adjacency and hence $I$ and $J$ have an edge between them in $GP(R)$.
\end{proof}

We now present the quasilocal case, which we will see is an exceptional case for these graphs in the sense that they are not always connected. Perhaps surprisingly, outside the quasilocal case, $GF(R)$ and $GP(R)$ are always connected.

\begin{comment}
\textcolor{red}{As a corollary we get that an almost Dedekind domain with only finitely many maximal ideals that are not finitely generated and a ring with all ideals (elements) idempotent should work as well.}
\end{comment}

\begin{thm}\label{QL}
Let $(R,\mathfrak{M})$ be quasilocal. $GF(R)$ (resp. $GP(R)$) is connected if and only if $\mathfrak{M}$ is finitely generated. Additionally, we have the following.
\begin{enumerate}
\item If $GF(R)$ is connected then $\text{diam}(GF(R))\leq 2$ and $GF(R)$ is complete if and only if $R$ is a Noetherian chained ring of dimension no more than $1$.
\item If $GP(R)$ is connected then $\text{diam}(GP(R))\leq 2n$ where $n$ is the minimal number of generators required for $\mathfrak{M}$ and $GP(R)$ is complete if and only if $R$ is a PIR.
 \end{enumerate}
\end{thm}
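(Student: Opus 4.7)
The plan is to handle the main equivalence and diameter bounds first, then address the two completeness characterizations.

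For the easy direction, write $\mathfrak{M}=(x_1,\ldots,x_n)$. Since $R$ is quasilocal every proper ideal sits inside $\mathfrak{M}$, so for any proper $I\neq\mathfrak{M}$ the equality $\mathfrak{M}=(I,x_1,\ldots,x_n)$ produces an edge $I-\mathfrak{M}$ in $GF(R)$, giving $\mathrm{diam}(GF(R))\leq 2$. For $GP(R)$ the same data yield a chain $I\subseteq (I,x_1)\subseteq (I,x_1,x_2)\subseteq\cdots\subseteq\mathfrak{M}$ in which consecutive terms are either equal or adjacent in $GP(R)$; this gives a path from $I$ to $\mathfrak{M}$ of length at most $n$, so $\mathrm{diam}(GP(R))\leq 2n$.

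For the converse I would argue contrapositively: assuming $\mathfrak{M}$ is not finitely generated, I show that $0$ and $\mathfrak{M}$ lie in distinct components of $GF(R)$, which forces the same for the subgraph $GP(R)$. Suppose to the contrary that $0=I_0-I_1-\cdots-I_m=\mathfrak{M}$ is a path in $GF(R)$, and introduce the cumulative sums $J_k=I_0+I_1+\cdots+I_k$. The main step is the inductive assertion that each $J_k$ is finitely generated: if $I_{k+1}\subseteq I_k$ then $J_{k+1}=J_k$, while if $I_k\subsetneq I_{k+1}$ with $I_{k+1}=(I_k,y_1,\ldots,y_s)$ then $J_{k+1}=J_k+(y_1,\ldots,y_s)$ is finitely generated over $J_k$. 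Because $R$ is quasilocal every $I_k\subseteq\mathfrak{M}$, hence $\mathfrak{M}=I_m\subseteq J_m\subseteq\mathfrak{M}$, forcing $J_m=\mathfrak{M}$ to be finitely generated, contradicting our assumption.

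For completeness, observe that in either graph every edge requires a containment, so completeness forces the proper ideals of $R$ to be totally ordered. Applying the edge condition with the lower ideal equal to $0$ then forces every proper ideal to be finitely (respectively principally) generated. In part (1) this yields $R$ Noetherian and chained; since $\mathfrak{M}$ is finitely generated and its generators are totally ordered by inclusion, $\mathfrak{M}=(x)$, and Krull's intersection theorem gives $\bigcap_k\mathfrak{M}^k=0$, so every nonzero element has the form $x^k\cdot(\text{unit})$ and every nonzero ideal is some $(x^k)$, making $R$ either Artinian chained or a DVR; thus $\dim R\leq 1$, and the converse is immediate. In part (2) the forward direction gives a PIR outright, and conversely a quasilocal PIR is Noetherian local, so the same Krull-intersection argument shows it is chained with all ideals of the form $(x^k)$; then for any comparable pair $I\subsetneq J=(b)$ the equality $J=(I,b)$ supplies the required edge of $GP(R)$.

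The main obstacle is the converse half of the connectedness equivalence: paths in $GF(R)$ may traverse edges in either direction, and finite generation is not inherited by subideals, so one cannot naively track finite generation of the ideals appearing along a path (indeed, in examples like $\mathbb{F}[x_1,x_2,\ldots]/(x_ix_j)$ there are infinitely generated ideals connected to $0$). The cumulative-sum invariant $J_k$ is designed precisely to circumvent this: because the sequence $(J_k)$ is monotone increasing regardless of the direction of each individual edge, finite generation propagates all the way along the path to $\mathfrak{M}$.
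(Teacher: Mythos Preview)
Your proof is correct, and the core connectedness argument is essentially the paper's approach repackaged: the paper writes the path as a zigzag $(0)\subset I_1\supset J_1\subset\cdots\subset I_n=\mathfrak{M}$ and shows inductively that $I_m\subseteq(I_{m-1},a_{m,1},\ldots,a_{m,t_m})$, concluding $\mathfrak{M}\subseteq(I_1,\{a_{i,j}\})$; your cumulative sums $J_k$ encode the same containment more cleanly.

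Where you genuinely diverge is in the completeness characterizations. For the dimension bound in (1) the paper argues that a Noetherian ring of dimension $\geq 2$ has infinitely many height-one primes below $\mathfrak{M}$, contradicting chainedness; you instead deduce $\mathfrak{M}=(x)$ from the chain condition, invoke Krull's intersection theorem, and read off the entire ideal lattice as $\{(x^k)\}$. For the converse in (2) the paper shows a local PIR is chained by a gcd argument (writing $(d)=(x,y)$, $x=dx'$, $y=dy'$ forces $(x',y')=R$, hence one of $x',y'$ is a unit), while you again route through Krull's intersection theorem. Both approaches work; yours gives a more explicit picture of the ring, the paper's avoids importing Krull's theorem.

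One small correction to your closing commentary: in $\mathbb{F}[x_1,x_2,\ldots]/(x_ix_j)$ one has $\mathfrak{M}^2=0$, so proper ideals are exactly $\mathbb{F}$-subspaces of $\mathfrak{M}$ and finite generation as an ideal coincides with finite $\mathbb{F}$-dimension; an easy induction then shows every ideal in the component of $0$ is finitely generated, so this ring does \emph{not} illustrate your point. The phenomenon you have in mind does occur, for instance in $\mathbb{F}[x_1,x_2,\ldots]_{(x_1,x_2,\ldots)}$ where $J=x_1(x_2,x_3,\ldots)$ is infinitely generated yet $0-(x_1)-J$ is a path. This is only motivational and does not affect the validity of your argument.
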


\begin{proof}
We first remark that if $R$ is a field then the result hold trivially, so we will assume that $R$ is not a field.
For the initial statement, we first suppose that $\mathfrak{M}$ is finitely generated. To show that $GF(R)$ is connected, we first note that since $\mathfrak{M}$ is finitely generated, it is certainly finitely generated over $I$. Hence if $I,J\subset R$ are any two ideals of $R$, then $I\leftrightarrow\mathfrak{M}\leftrightarrow J$ is a path of length no more than 2 from $I$ to $J$. For $GP(R)$ the proof is similar (with a possibly longer path).

Now we suppose that $GF(R)$ is connected. By assumption, there is a finite path from $(0)$ to $\mathfrak{M}$. Since a finite extension of a finite extension is finite, we can assume that this path takes on the form

\[
(0)\subset I_1\supset J_1\subset I_2\supset J_2\subset\cdots\subset I_{n-1}\supset J_{n-1}\subset I_n=\mathfrak{M}.
\]

\noindent Note that $I_1=(a_{1,1},a_{1,2},\cdots,a_{1,t_1})$ is finitely generated and for all $2\leq m\leq n$ we can write $I_m=(J_{m-1},a_{m,1},a_{m,2},\cdots,a_{m,t_m})$, and since $J_{m-1}\subset I_{m-1}$, we have that $I_m\subseteq (I_{m-1}, a_{m,1},a_{m,2},\cdots,a_{m,t_m})$

 In particular,
 
 \[
 I_n=\mathfrak{M}\subseteq (I_1, \{a_{i,j}\}_{j=1}^{t_i}{_{i=2}^n})
 \]
 
\noindent and since $\mathfrak{M}$ is maximal, equality holds. Hence $\mathfrak{M}$ is finitely generated. Since each finite extension is a finite sequence of principal extensions, this establishes the statement for $GP(R)$ as well.

For (1), we have already established that $\text{diam}(GF(R))\leq 2$. If $GF(R)$ is complete, then any two ideals must be comparable and hence $R$ is chained. Now note that if $I$ is an arbitrary ideal of $R$ then there is an edge between $I$ and $(0)$ and so $I$ must be finitely generated and so $R$ is Noetherian. Finally, we note that since $R$ is Noetherian and chained, its dimension must be no more than $1$ (if $R$ has a height $1$ nonmaximal prime ideal then there must be infinitely many and hence $R$ cannot be chained).

For the converse, note that if $R$ is Noetherian and chained then any two ideals are comparable and since they are finitely generated, there must be an edge between them.

For (2) we note that if $\mathfrak{M}$ is generated by $\{x_1,x_2,\cdots, x_n\}$ then as $R$ is quasilocal, there is a path from any ideal $I$ to $\mathfrak{M}$ of length no more than $n$. So given ideals $I,J\subset R$, there is a path from $I$ to $J$ bounded by twice the number of generators of $\mathfrak{M}$ and hence $\text{diam}(GP(R))\leq 2n$.

Now suppose that $GP(R)$ is complete and let $I\subset R$ be an ideal. Since $I$ and $(0)$ have an edge between them, $I$ must be principal. Conversely if $R$ is a PIR and local then $R$ is chained. To see this, note that if $I=(x)$ and $J=(y)$ then $(d)=(x,y)$. If $dx'=x$ and $dy'=y$, it is easy to see that $(x',y')=R$ and so (without loss of generality) $x'$ is a unit and $x\vert y$. Hence $J\subseteq I$ and so there is an edge between $I$ and $J$.
\end{proof}

We now suppose that $R$ is not quasilocal. In this case, $GF(R)$ and $GP(R)$ are always connected, but as we will see, the diameter of $GF(R)$ reveals some subtleties concerning the structure of $R$. With regard to diameter, we will focus on $GF(R)$ as the large number of steps sometimes required to make a path in $GP(R)$ clouds the issue a bit. On the other hand, we will make note of $GP(R)$ when prudent.

 We begin with a definition and a useful lemma that will simplify matters.

\begin{defn}
Let $I\subset R$ be an ideal. We define $\text{MaxSpec}_I(R)$ to be the collection of maximal ideals of $R$ containing $I$ and following \cite{G} we define the Jacobson radical of $I$, $\mathfrak{J}(I)$, to be the intersection of all maximal ideals of $R$ containing $I$. If $R$ is a ring we use the notation $\mathfrak{J}(R)$ to be the Jacobson radical of $R$.
\end{defn}

We remark that it is an easy exercise to verify that $\mathfrak{J}(R/I)=\mathfrak{J}(I)/I$ and we will use this fact on a number of occasions.

In the following key lemma, we describe paths between ideals in $GF(R)$ and $GP(R)$. The bounds apply to both as principally generated ideals will be used in the proof.

\begin{lem}\label{num}
Let $R$ be a commutative ring with identity and consider the graphs $GF(R)$ and $GP(R)$.
\begin{enumerate}
\item If the ideals $I$ and $J$ are comaximal, then there is a path of length $2$ from $I$ to $J$.
\item If $I$ and $J$ are not comaximal, but there is a maximal ideal $M$ that contains $I$ but not $J$ then there is a path of length no more than $3$ from $I$ to $J$.
%\item If $\text{MaxSpec}_I(R)=\text{MaxSpec}_J(R)$ and there is a finitely generated ideal containing $\mathfrak{J}(I)$ then there is a path of length no more than $2$ from $I$ to $J$ in $GF(R)$ (and a path bounded by twice the number of generators of the finitely generated ideal in $GP(R)$).
\item  If $\text{MaxSpec}_I(R)=\text{MaxSpec}_J(R)$ and $\vert\text{MaxSpec}(R)\vert>1$, then there is a path of length no more than $4$ from $I$ to $J$.
\end{enumerate}
\end{lem}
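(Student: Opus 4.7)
The plan is to prove (1) first as a workhorse and then bootstrap (2) and (3) from it. For (1), fix $i_{0}+j_{0}=1$ with $i_{0}\in I$ and $j_{0}\in J$. For any $x\in I$, writing $x=xi_{0}+xj_{0}$ and observing that $xj_{0}\in I\cap J$ yields $I=(i_{0})+(I\cap J)$, so $I/(I\cap J)$ is principal; symmetrically $J/(I\cap J)$ is principal. Since $I$ and $J$ are proper and $I+J=R$, neither can contain the other, so $I\cap J$ is a vertex strictly smaller than both, producing the length-$2$ path $I \to I\cap J \to J$ in $GP(R)$ (and hence in $GF(R)$).

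For (2) I insert an intermediate that restores comaximality. Since $M$ is maximal and $J\not\subseteq M$, part (1) applies to the pair $(J,M)$: pick $j_{0}+m_{0}=1$ with $j_{0}\in J$ and $m_{0}\in M$. The key observation is $m_{0}\notin I$, for otherwise $1=j_{0}+m_{0}\in I+J$, contradicting the non-comaximality of $I$ and $J$. Let $M':=I+(m_{0})$. Then $I\subsetneq M'\subseteq M$, so $M'$ is proper with $M'/I$ principal, giving the edge $I\to M'$. Moreover $1\in M'+J$, so $M'$ and $J$ are comaximal; applying (1) supplies $M'\to M'\cap J\to J$, and splicing yields the length-$3$ path $I\to M'\to M'\cap J\to J$.

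For (3) the natural strategy is to route through an auxiliary maximal ideal. In the easy subcase there exists a maximal $N\notin\text{MaxSpec}_{I}(R)=\text{MaxSpec}_{J}(R)$; then $N$ is comaximal with both $I$ and $J$, and two applications of (1) glued at $N$ give the length-$4$ path $I\to I\cap N\to N\to N\cap J\to J$. The obstacle—and the reason $|\text{MaxSpec}(R)|>1$ is listed as a separate hypothesis—is the residual subcase $\text{MaxSpec}_{I}(R)=\text{MaxSpec}(R)$, in which $I$ and $J$ both lie in $\mathfrak{J}(R)$ and no such $N$ exists. There I pick distinct maximals $M_{1}\neq M_{2}$ (using $|\text{MaxSpec}(R)|>1$), write $m_{1}+m_{2}=1$ with $m_{i}\in M_{i}$, and deform $I,J$ into $I':=I+(m_{2})$ and $J'':=J+(m_{1})$. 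Each is proper (contained in $M_{2}$, respectively $M_{1}$) and properly larger than its predecessor (since $m_{2}\notin M_{1}\supseteq I$ and $m_{1}\notin M_{2}\supseteq J$), and $1=m_{1}+m_{2}\in I'+J''$, so the pair is comaximal. Part (1) then yields $I'\to I'\cap J''\to J''$, completing the length-$4$ chain $I\to I'\to I'\cap J''\to J''\to J$.

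The main technical nuisance throughout is maintaining strict containments at every step, since edges demand proper containment; in each case this reduces to checking that a carefully named element avoids a prescribed ideal, and the stated hypotheses (non-comaximality in (2), distinctness of maximals in (3)) systematically rule out the relevant coincidences. When degeneracy nonetheless occurs, the path merely shortens, respecting the claimed bounds. Because every edge constructed is a principal extension, the bounds hold in $GP(R)$ and a fortiori in $GF(R)$.
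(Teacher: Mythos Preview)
Your proof is correct and follows essentially the same approach as the paper. The only cosmetic differences are that in part~(1) you route through $I\cap J$ while the paper routes through $IJ$ (both are principally generated under $I$ and under $J$ by the same computation), and in part~(3) you case-split on whether some maximal ideal avoids $I$ whereas the paper fixes $M\supseteq I$, picks any other maximal $N$, and case-splits on whether $J\subseteq N$; the two organizations unwind to the same comaximality argument.
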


\begin{proof}
For (1), we suppose that $I$ and $J$ are comaximal and find $\alpha\in I$ and $\beta\in J$ such that $\alpha+\beta=1$. So if $x\in I$ (resp. $x\in J$) then the equation
\[
x\alpha+x\beta=x
\]
\noindent demonstrates that the ideal $I$ (resp. $J$) is singly generated by $\alpha$ (resp. $\beta$) over the ideal $IJ$. Hence in $GF(R)$ (as well as $GP(R)$), we have the path $I-IJ-J$ of length 2.  

For (2), since $M$ does not contain $J$ and is maximal, we can find $m\in M$, $j\in J$ such that $m+j=1$. Since $I\subseteq M$ we have that $(I,m)$ is a proper ideal and is clearly principally generated (or less) over $I$. By (1) there is a path between $(I,m)$ and $J$ of length $2$ and hence there is a path of length no more than $3$ from $I$ to $J$.

%For (3), since $\text{MaxSpec}_I(R)=\text{MaxSpec}_J(R)$, we have that $\mathfrak{J}(I)=\mathfrak{J}(J)$. Since there is an  ideal $F$ that is finitely generated over $\mathfrak{J}(I)$, $I\leftrightarrow F\leftrightarrow J$ is a path of length no more than 2 connecting $I$ and $J$. In the $GP(R)$ case, the path to the intermediate $F$ and back is bounded by twice the number of generators of $F$.

For the last statement, we will assume that $\text{MaxSpec}_I(R)=\text{MaxSpec}_J(R)$ and select $M\in\text{MaxSpec}_I(R)$. By hypothesis, there is another maximal ideal $N$ and we find $m\in M$ and $n\in N$ such that $m+n=1$. In any case, we have the ideals $I$ and $(I, m)$ are equal or have an edge between them. If $J\subseteq N$ then $J$ and $(J,n)$ are equal or have an edge between them and since $(I,m)$ and $(J,n)$ are comaximal, we have our desired path of length no more than 4. On the other hand, if $J$ is not contained in $N$, then $J$ and $N$ are comaximal (as are $I$ and $N$). From part (1) there is a path of length 2 from $I$ to $N$ and a path of length 2 from $N$ to $J$ and this completes the proof.
\end{proof}

The following theorem follows directly from Lemma \ref{num}.

\begin{thm}
Let $R$ be commutative with 1 with $\vert\text{MaxSpec}(R)\vert>1$. Then $GF(R)$ and $GP(R)$ are connected and $\text{diam}(R)\leq 4$.
\end{thm}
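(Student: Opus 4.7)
The plan is to carry out a straightforward case analysis on any two distinct proper ideals $I, J \subset R$, matching each case to one of the three parts of Lemma \ref{num}. The three cases will be mutually exhaustive, and the worst bound among them ($4$) will give the diameter estimate.

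First I would handle the comaximal case: if $I + J = R$, then Lemma \ref{num}(1) produces a path $I - IJ - J$ of length $2$. Next, I would handle the case where $I$ and $J$ are not comaximal but $\text{MaxSpec}_I(R) \neq \text{MaxSpec}_J(R)$. Then by symmetry we may assume there is a maximal ideal $M$ with $I \subseteq M$ and $J \not\subseteq M$, and Lemma \ref{num}(2) yields a path of length at most $3$. Finally, if $I$ and $J$ are not comaximal and $\text{MaxSpec}_I(R) = \text{MaxSpec}_J(R)$, the assumption $|\text{MaxSpec}(R)| > 1$ guarantees the hypotheses of Lemma \ref{num}(3) are met, giving a path of length at most $4$.

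The only thing to check is that these three cases really are exhaustive. This is immediate: either $I$ and $J$ are comaximal, or they share a common maximal ideal, in which case either their sets of containing maximals coincide (case three) or they differ (case two, by reindexing). Taking the maximum over the three bounds gives $\text{diam}(GF(R)) \leq 4$ and $\text{diam}(GP(R)) \leq 4$; connectedness of both graphs is immediate from the existence of a finite path in every case. Since the proofs of the three parts of Lemma \ref{num} construct principally generated extensions (even in the $GF(R)$ setting, the generators added at each step are single elements), the same bounds transfer verbatim to $GP(R)$.

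There is no real obstacle here: the content of the argument is entirely packaged in Lemma \ref{num}, and what remains is simply verifying that the trichotomy used in the lemma covers every pair of proper ideals. I would be careful only to note that in case two the hypothesis ``$|\text{MaxSpec}(R)| > 1$'' is not actually needed (a single maximal ideal suffices there), but it is essential in case three, which is precisely why the theorem excludes the quasilocal case handled separately by Theorem \ref{QL}.
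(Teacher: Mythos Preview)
Your proposal is correct and matches the paper's approach exactly: the paper simply states that the theorem ``follows directly from Lemma \ref{num},'' and your case analysis is precisely the unpacking of that remark. The trichotomy you describe (comaximal; not comaximal with differing $\text{MaxSpec}$; equal $\text{MaxSpec}$) is the intended one, and your observation that the lemma's constructions use only principal extensions---so the bounds hold for $GP(R)$ as well---is already noted in the paper's preamble to the lemma.
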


We now focus on $GF(R)$ in the case that $\vert\text{MaxSpec}(R)\vert>1$ and examine necessary and sufficient conditions for the diameter to be of prescribed sizes.

\begin{thm}\label{66}
Let $R$ be a commutative ring with 1 with $\vert\text{MaxSpec}(R)\vert>1$.
\begin{enumerate}
\item $\text{diam}(GF(R))= 2$ if and only if every maximal ideal of $R$ is finitely generated.
\item $\text{diam}(GF(R))\leq 3$ if and only if given $I,J\subset R$ with $\mathfrak{J}(I)=\mathfrak{J}(J)$ then there is a proper ideal $K$ that is finitely generated over both $I$ and $J$.
\end{enumerate}
\end{thm}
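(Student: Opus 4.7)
Plan for (1). Both directions hinge on the fact that two distinct maximal ideals cannot share a direct edge in $GF(R)$, because an edge requires strict containment. For the reverse implication, assume every maximal ideal is finitely generated. Given distinct proper $I,J\subset R$: either they are comaximal, in which case Lemma \ref{num}(1) provides a path of length $2$; or they share a common maximal $M$, in which case $I \leftrightarrow M \leftrightarrow J$ is a path with $M$ finitely generated over each endpoint, since $M$ is itself finitely generated. Thus $\text{diam}(GF(R)) \le 2$, and combined with the inequality $\text{diam}(GF(R)) \ge 2$ (two distinct maximals have distance at least $2$, since no direct edge between them is possible) we obtain equality. For the forward direction I argue contrapositively: if some maximal $M$ is not finitely generated, I consider $d((0),M)$. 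A direct edge $(0){-}M$ would force $M$ finitely generated, and any length-$2$ path $(0){-}K{-}M$ would force $K$ finitely generated (it must be finitely generated over $(0)$) and hence $M=(K,x_1,\dots,x_n)$ also finitely generated. Both are contradictions, so $d((0),M)\ge 3$ and $\text{diam}(GF(R))>2$.

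Plan for (2), easy direction. Assume the ``common finitely-generated cover'' condition holds. Given distinct proper $I,J\subset R$, invoke the three cases of Lemma \ref{num}. Parts (1) and (2) of that lemma supply paths of length $\le 2$ and $\le 3$ respectively. In the remaining case one has $\text{MaxSpec}_I(R)=\text{MaxSpec}_J(R)$, which is equivalent to $\mathfrak{J}(I)=\mathfrak{J}(J)$, so the hypothesis produces a proper $K$ finitely generated over both $I$ and $J$ and hence a path $I{-}K{-}J$ of length $2$. Hence $\text{diam}(GF(R))\le 3$.

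Plan for (2), hard direction. Fix $I\neq J$ proper with $\mathfrak{J}(I)=\mathfrak{J}(J)$; by hypothesis there is a path $I{-}X_1{-}\cdots{-}X_k{-}J$ with $k\le 2$. I perform case analysis on $k$ and on the ``arrow pattern'' of the edges (each edge is a strict containment in one of two directions). For $k=0$ or $k=1$ the witness $K$ is immediate. For $k=2$ the eight arrow patterns collapse, using transitivity of finite extension, to cases where $K$ can be chosen as $J$, a single $X_i$, $I+J$, $X_1+J$, or $I+X_2$. In each instance I rewrite $K$ using the generators supplied by successive edges; for example in the zig-zag $I\subset X_1\supset X_2\subset J$ with $X_1=(I,a_1,\dots,a_m)$, $X_1=(X_2,c_1,\dots,c_r)$, and $J=(X_2,b_1,\dots,b_n)$, one has $X_1+J=(I,a_i,b_j)=(J,c_i)$, which simultaneously certifies finite generation of $K$ over $I$ and over $J$. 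Properness of $K$ is then verified using the Jacobson hypothesis: every maximal $M$ containing $I$ also contains $J$, and each candidate $K$ is a sum of ideals each of which is trapped inside such an $M$ (in the zig-zag example, the vertex $X_1$ is proper and contains $I$, so any maximal $M\supseteq X_1$ contains $I$, hence $J$, hence $X_1+J$).

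The main obstacle is the $k=2$ zig-zag case of the hard direction, specifically the patterns $I\subset X_1\supset X_2\subset J$ and $I\supset X_1\subset X_2\supset J$, where no single vertex on the path is finitely generated over both endpoints and the naive choice $I+J$ is not obviously finitely generated over either. The correct choice $X_1+J$ (respectively $I+X_2$) requires a delicate rewriting in which two overlapping sets of finite generators coming from different edges must both be used, and an appeal to the Jacobson hypothesis through a maximal ideal containing the ``pivot'' vertex $X_1$ (respectively $X_2$) to ensure that the resulting $K$ is still proper.
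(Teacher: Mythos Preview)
Your proposal is correct and follows essentially the same approach as the paper. The only stylistic difference is that for the length-$3$ path in the hard direction of (2), the paper reduces by a ``without loss of generality'' to the single pattern $I\supset A\subset B\supset J$ (observing that consecutive edges in the same direction collapse by transitivity, and the two remaining zig-zags are symmetric), whereas you enumerate the arrow patterns explicitly; your choice $K=X_1+J$ in the zig-zag case and your properness argument via a maximal ideal over the pivot vertex are exactly the paper's construction $K=I+B$ and its Jacobson-radical containment check.
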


\begin{proof}
If $\text{diam}(GF(R))= 2$, then given any maximal ideal $\mathfrak{M}$, there is a path of length $1$ or $2$ between $\mathfrak{M}$ and $(0)$. Whether the path is of the form $(0)\relbar\mathfrak{M}$ or $(0)\relbar I\relbar\mathfrak{M}$, $\mathfrak{M}$ is finitely generated.

On the other hand, if $I,J\subset R$ are ideals, then there is a path of length $2$ between them if $I$ and $J$ are comaximal by Lemma \ref{num}. If $I$ and $J$ are both contained in the maximal ideal $\mathfrak{M}$ then $I\leftrightarrow \mathfrak{M}\leftrightarrow J$ is a path of length no more than $2$ between them. Since $\vert\text{MaxSpec}(R)\vert>1$, we have that $\text{diam}(GF(R))$ is precisely $2$. This establishes the first statement.

For (2), we first suppose that $\text{diam}(GF(R))\leq 3$. Let $I,J\subset R$ be ideals with $\mathfrak{J}(I)=\mathfrak{J}(J)$ and consider cases. 
If there is an edge between $I$ and $J$, then the larger ideal is finitely generated over both. 

If there is a path of length $2$ between $I$ and $J$, say $I-A-J$ and $I,J\subset A$ then $A$ is finitely generated over both $I$ and $J$. If on the other hand, $A\subset I,J$, then $I=(A,x_1,x_2,\cdots, x_n)$ and $J=(A,y_1,y_2,\cdots, y_m)$. Now note that $I+J=(I, y_1,y_2,\cdots, y_m)=(J,x_1,x_2,\cdots, x_n)$ is finitely generated over both $I$ and $J$ and note that since $\mathfrak{J}(I)=\mathfrak{J}(J)$, $I+J$ is proper.

Finally, we suppose that there is a path of length $3$ between $I$ and $J$ of the form $I-A-B-J$. Without loss of generality we will assume that we have the containments $I\supset A\subset B\supset J$. In a similar fashion to the previous argument, we write $I=(A,x_1,x_2,\cdots,x_n)$ and $B=(A, z_1,z_2, \cdots, z_t)=(J,y_1,y_2,\cdots, y_m)$. Note that $I+B$ is a proper ideal as $\mathfrak{J}(I)=\mathfrak{J}(J)\subseteq\mathfrak{J}(B)$. So the ideal $I+B=(I,z_1,z_2,\cdots, z_t)=(J, y_1,y_2,\cdots, y_m,x_1,x_2,\cdots, x_n)$ is finitely generated over both $I$ and $J$.

Conversely, note that Lemma \ref{num} shows that the only unresolved case is the case in which $I,J\subset R$ with $\mathfrak{J}(I)=\mathfrak{J}(J)$. But the existence of $K$ shows that there is a path between $I$ and $J$ of length no more than $2$. Hence $\text{diam}(GF(R))\leq 3$.
\end{proof}

\begin{cor}\label{FG}
Let $R$ be commutative with identity. Then $\text{diam}(GF(R))\leq 3$ if and only if for all $I\subset R$, $\mathfrak{J}(R/I)$ is contained in a finitely generated proper ideal.
\end{cor}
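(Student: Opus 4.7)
My plan is to reduce Corollary \ref{FG} to the two-variable criterion of Theorem \ref{66}(2). Since Theorem \ref{66} assumes $\vert\text{MaxSpec}(R)\vert>1$, I first dispose of the quasilocal case: if $(R,\mathfrak{M})$ is quasilocal, then $\mathfrak{J}(R/I)=\mathfrak{M}/I$ is the unique maximal ideal of $R/I$, so being contained in a finitely generated proper ideal amounts to being itself finitely generated, and demanding this for every $I$ is equivalent to $\mathfrak{M}$ being finitely generated. By Theorem \ref{QL}, this is precisely when $GF(R)$ is connected (with $\text{diam}(GF(R))\leq 2$), so both sides of the claimed equivalence coincide.

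Assume $\vert\text{MaxSpec}(R)\vert>1$. By Theorem \ref{66}(2) it then suffices to show that the Jacobson-radical condition is equivalent to: whenever $\mathfrak{J}(I)=\mathfrak{J}(J)$ there is a proper ideal $K$ finitely generated over both $I$ and $J$. For the forward direction, fix $I$ and feed the pair $(I,\mathfrak{J}(I))$ to the criterion; this pair satisfies its hypothesis because a maximal ideal contains $\mathfrak{J}(I)$ iff it contains $I$ (since $I\subseteq\mathfrak{J}(I)$), so $\mathfrak{J}(\mathfrak{J}(I))=\mathfrak{J}(I)$. The resulting $K$ contains $\mathfrak{J}(I)$ and is finitely generated over $I$, hence $K/I$ serves as the required finitely generated proper ideal of $R/I$ containing $\mathfrak{J}(R/I)$. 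The degenerate case $I=\mathfrak{J}(I)$ is trivial since then $\mathfrak{J}(R/I)=0$.

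For the reverse direction, given $I,J$ with $\mathfrak{J}(I)=\mathfrak{J}(J)$, I would set $L=I\cap J$ and apply the hypothesis at $L$. The main technical observation---which I expect to carry essentially all the nontrivial content---is that $\mathfrak{J}(I\cap J)=\mathfrak{J}(I)\cap\mathfrak{J}(J)$, because any maximal ideal containing $I\cap J$ must contain $I$ or $J$. Combined with our hypothesis this gives $\mathfrak{J}(L)=\mathfrak{J}(I)$, an ideal containing both $I$ and $J$, so the assumption yields a finitely generated proper ideal $K$ of $R$ with $K\supseteq\mathfrak{J}(L)$ and $K/L$ finitely generated. Writing $K=(L,k_1,\dots,k_n)$ and absorbing $L\subseteq I$ (respectively $L\subseteq J$) into the first slot then gives $K=(I,k_1,\dots,k_n)=(J,k_1,\dots,k_n)$, so $K$ is finitely generated over each of $I$ and $J$, verifying the criterion of Theorem \ref{66}(2).
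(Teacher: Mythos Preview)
Your proof is correct and follows essentially the same approach as the paper: handle the quasilocal case via Theorem \ref{QL}, then reduce to Theorem \ref{66}(2) by pairing $I$ with $\mathfrak{J}(I)$ for the forward direction and descending to a common subideal for the reverse. The only difference is cosmetic: you use $L=I\cap J$ where the paper uses $IJ$, but since $IJ\subseteq I\cap J$ and primes over either coincide with primes over $I$ or over $J$, both choices give $\mathfrak{J}(L)=\mathfrak{J}(I)=\mathfrak{J}(J)$ and the argument proceeds identically from there.
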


\begin{proof}
Note first that Theorem \ref{QL} shows that this result holds in the quasilocal case, and so we will assume that $\vert\text{MaxSpec}(R)\vert>1$.

We suppose first that $\text{diam}(GF(R))\leq 3$ and select an arbitrary $I\subset R$. Theorem \ref{66} assures us that there is an ideal $B\subset R$ with $B$ finitely generated over both $I$ and $\mathfrak{J}(I)$ and so $B/I$ is a finitely generated ideal of $R/I$ that contains $\mathfrak{J}(I)/I=\mathfrak{J}(R/I)$.

Conversely, suppose that for all $I\subset R$ we have that $\mathfrak{J}(R/I)$ is contained in a finitely generated ideal and select ideals $I, J\subset R$ with $\mathfrak{J}(I)=\mathfrak{J}(J)$. By assumption $\mathfrak{J}(R/IJ)=\mathfrak{J}(IJ)/IJ$ is contained in the finitely generated ideal $K/IJ$. Note that as $\text{MaxSpec}_I(R)=\text{MaxSpec}_J(R)$, $\mathfrak{J}(IJ)=\mathfrak{J}(J)=\mathfrak{J}(I)$ and so we have that $K/IJ$ contains $\mathfrak{J}(I)/IJ$ which in turn contains $I/IJ$. Hence $K/I\cong(K/IJ)/(I/IJ)$ is finitely generated. A similar proof establishes that $K/J$ is finitely generated, and so $K$ is finitely generated over both $I$ and $J$. By Theorem \ref{66} we are done.
\end{proof}

\begin{comment}
The condition for diameter 3 can be recouched as for all $I\subset R$, $\mathfrak{J}(R/I)$ is contained in a finitely generated ideal. It is therefore necessary that $\mathfrak{J}(R)$ is contained in a finitely generated ideal, but this is not sufficient as we will see in a polynomial example.
\end{comment}

\begin{ex}
Let $V$ be a $1-$dimensional nondiscrete valuation domain with maximal ideal $\mathfrak{M}$ and consider the ring $V[x]$. Note that $V\cong V[x]/(x)$ has Jacobson radical $\mathfrak{M}$ and so by Corollary \ref{FG}, $GF(V[x])$ has diameter $4$.
\end{ex}

\begin{ex}\label{DP}
Consider the ring $R:=\prod_{i\in\Gamma}\mathbb{F}_2$ where $\Gamma$ is a nonempty index set with at least two elements (the graph $GF(R)$ is a single vertex in the degenerate case that $\Gamma$ consists of a single element). Note first that any element of $R$ is idempotent and so this is true of any homomorphic image of $R$. So if $T$ is a homomorphic image of $R$ and $e\in T$ then $e(1-e)=0$. So if $e\in\mathfrak{J}(T)$ then $1-e$ is a unit in $T$ which implies that $e=0$. Since $\mathfrak{J}(T)=0$ for all homomorphic images of $T$ of $R$, we have that $\text{diam}(GF(R))=3$ if $\Gamma$ is infinite and $\text{diam}(GF(R))=2$ if $\Gamma$ is finite with at least two elements.
\end{ex}

We now wish to show that the family of almost Dedekind domains that are not Dedekind produce examples of domains where $\text{diam}(GF(R))=3$ in abundance. We first require a couple of preliminary results.

\begin{lem}\label{jr}
If $R$ is a commutative ring with 1, $S\subseteq R$ a multiplicative set, and $I\subset R$ an ideal with the property that for all $s\in S$, there is an $x_s\in I$ and $s^\prime\in S$ such that $s^\prime s+x_s=1$, then $R/I\cong R_S/I_S$.
\end{lem}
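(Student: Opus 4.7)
The plan is to identify both sides via the universal property of localization. The hypothesis $s's+x_s=1$ with $x_s\in I$ says precisely that for each $s\in S$ the image $\bar s\in R/I$ is a unit, with inverse $\bar{s'}$; my first step will be to record this and use it to factor the quotient map $\pi\colon R\to R/I$ through $R_S$. Since $\pi$ carries $S$ into the units of $R/I$, the universal property of localization produces a unique ring map $\psi\colon R_S\to R/I$ satisfying $\psi(r/1)=\bar r$, and necessarily given on the nose by $\psi(r/s)=\bar r\cdot\bar{s'}$.

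Next I would show that $\psi$ is surjective with kernel exactly $I_S$ and then invoke the first isomorphism theorem to obtain $R_S/I_S\cong R/I$. Surjectivity is immediate since $\bar r=\psi(r/1)$ for every $r\in R$. The inclusion $I_S\subseteq\ker\psi$ follows because any element of $I_S$ may be written as $i/t$ with $i\in I$ and $t\in S$, and then $\psi(i/t)=\bar i\cdot\bar{t'}=0$. For the reverse inclusion, if $\psi(r/s)=\bar r\cdot\bar{s'}=0$, then because $\bar{s'}$ is a unit in $R/I$ we conclude $\bar r=0$, so $r\in I$, and consequently $r/s\in I_S$.

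I do not anticipate any real obstacle: the content of the hypothesis is precisely what is needed to invert elements of $S$ modulo $I$, and once $\psi$ has been constructed via the universal property the rest is formal. The one point meriting a bit of care is that the expression $\bar r\cdot\bar{s'}$ must be independent of the choice of $s'$ and of the representative $r/s$, but this is guaranteed by the uniqueness clause in the universal property together with the fact that a unit in a commutative ring has a unique inverse.
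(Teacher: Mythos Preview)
Your proof is correct, but it takes the opposite direction from the paper's argument. The paper defines the canonical map $\phi\colon R\to R_S/I_S$, $z\mapsto \frac{z}{1}+I_S$, and then checks by hand that $\phi$ is surjective (using the relation $s's+x_s=1$ to rewrite $\frac{r}{s}$ modulo $I_S$ as $\frac{rs'}{1}$) and that $\ker\phi=I$ (which requires unwinding the equivalence relation in $R_S$: from $\frac{z}{1}=\frac{\alpha}{s}$ with $\alpha\in I$ one gets $t(zs-\alpha)=0$, and two further applications of the hypothesis are needed to peel off $t$ and $s$ and conclude $z\in I$).

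Your route through the universal property is cleaner: once you observe that every $\bar s$ is a unit in $R/I$, the map $\psi\colon R_S\to R/I$ exists and is automatically well-defined, surjectivity is immediate, and the kernel computation reduces to the single observation that $\bar r\cdot\bar s^{-1}=0$ forces $\bar r=0$. In effect, the universal property absorbs the explicit fraction manipulations that the paper carries out by hand. Both arguments ultimately rest on the same content (the hypothesis makes $S$ invertible modulo $I$), but yours packages it more efficiently.
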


\begin{proof}
We define $\phi:R\longrightarrow R_S/I_S$ given by $\phi(z)=\frac{z}{1}+I_S$. We suppose that $r\in R$ and $s\in S$. By hypothesis, we can find $x_s\in I, s^\prime\in S$ with $s^\prime s+x_s=1$. So in $R_S$ we have $s'+\frac{x_s}{s}=\frac{1}{s}$ and hence $rs'+\frac{rx_s}{s}=\frac{r}{s}$.

We now observe that $\phi(rs')=\frac{rs'}{1}+I_S=\frac{r}{s}-\frac{rx_s}{s}+I_S=\frac{r}{s}+I_S$ and so $\phi$ is onto.

It is easy to see that $I\subseteq\text{ker}(\phi)$. For the other containment, note that if $z\in\text{ker}(\phi)$, then $\frac{z}{1}=\frac{\alpha}{s}$ for some $\alpha\in I$ and $s\in S$. Hence there exists $t\in S$ such that $t(zs-\alpha)=0$. Using the assumed property, there is a $t^\prime\in S$ and $\beta\in I$ such that $tt^\prime+\beta=1$. From this we obtain that $(1-\beta)(zs-\alpha)=0$ and so $zs\in I$. Once again, we use the fact that there is $s^\prime\in S$ and $\gamma\in I$ such that $ss^\prime+\gamma=1$ and we now obtain that $z(1-\gamma)\in I$ and hence $z\in I$. By the First Isomorphism Theorem, we have that $R/I\cong R_S/I_S$.
\end{proof}

\begin{prop}\label{LC}
Let $I\subset R$ be an ideal and $\{M_i\}_{i\in\Gamma}$ be the collection of maximal ideals of $R$ that contain $I$. If $S=(\bigcup_{i\in\Gamma} M_i)^c$, then $R/I\cong R_S/I_S$.
\end{prop}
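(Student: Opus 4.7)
The plan is to recognize this proposition as a direct application of Lemma \ref{jr}, so the work amounts to verifying the hypothesis of that lemma for the specific multiplicative set $S=(\bigcup_{i\in\Gamma} M_i)^c$. First I would confirm that $S$ really is multiplicatively closed: if $s,t\notin M_i$ for every $i\in\Gamma$, then in the field $R/M_i$ both $s$ and $t$ are nonzero, so their product is nonzero, meaning $st\notin M_i$ for every $i$, and hence $st\in S$. Also, $1\in S$ trivially.

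The core step is showing that for each $s\in S$ there exist $x_s\in I$ and $s'\in S$ with $s's+x_s=1$. Given $s\in S$, consider the ideal $(s)+I$. I would argue that $(s)+I=R$: otherwise it is contained in some maximal ideal $M$, and because $I\subseteq M$ we must have $M=M_j$ for some $j\in\Gamma$, but then $s\in M_j$, contradicting $s\in S$. Hence there are $s'\in R$ and $x_s\in I$ with $s's+x_s=1$. The remaining point is that this $s'$ actually lies in $S$: for any $i\in\Gamma$ the relation $s's\equiv 1\pmod{M_i}$ (using $x_s\in I\subseteq M_i$) forces $s'\notin M_i$, so $s'\in S$ as required.

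With the hypothesis of Lemma \ref{jr} verified, the conclusion $R/I\cong R_S/I_S$ is immediate. The only place anything subtle happens is in choosing $s'$ inside $S$ rather than merely in $R$, and I would not expect that to be an obstacle since it follows cleanly from the defining relation modulo each $M_i$. No additional lemmas beyond \ref{jr} and the definition of $S$ should be needed.
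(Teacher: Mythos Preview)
Your proposal is correct and follows essentially the same approach as the paper: verify that for each $s\in S$ the ideal $(s,I)=R$, extract $s's+x_s=1$, check that $s'\in S$, and invoke Lemma~\ref{jr}. The paper's proof is terser (it omits the check that $S$ is multiplicatively closed, which you include), but the argument is the same.
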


\begin{proof}Suppose that $s\in S$ and note that by the definition of $S$, $(s,I)=R$. Hence we can find $x_s\in I, s^\prime\in R$  with $s^\prime s+x_s=1$. Observe that $s'$ cannot be in any maximal ideal containing $I$ and so must be in $S$; we now appeal to Lemma \ref{jr}.
\end{proof}

\begin{prop}\label{AD}
If $R$ be an almost Dedekind domain that is not Dedekind with finitely many maximal ideals that are not finitely generated, then $\text{diam}(GF(R))=3$.
\end{prop}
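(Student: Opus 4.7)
The plan is to establish both $\mathrm{diam}(GF(R)) \geq 3$ and $\mathrm{diam}(GF(R)) \leq 3$. For the lower bound, I would note that since $R$ is not Dedekind, some maximal of $R$ is not finitely generated, so Theorem~\ref{66}(1) immediately rules out diameter $2$. Before turning to the upper bound, I would record the classical fact that an almost Dedekind domain of finite character is Dedekind; in particular $R$ must have infinitely many maximal ideals, for otherwise it would be semi-local (hence of finite character) and therefore Dedekind, contradicting the hypothesis.

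For the upper bound I would appeal to Corollary~\ref{FG}, so the task reduces to showing that for every proper $I \subset R$ the Jacobson radical $\mathfrak{J}(R/I) = \mathfrak{J}(I)/I$ is contained in a finitely generated proper ideal of $R/I$. I would split into two cases according to $\mathrm{MaxSpec}_I(R)$. In Case~1, some $M \in \mathrm{MaxSpec}_I(R)$ is finitely generated in $R$; then $M$ is finitely generated over $I$, proper, and contains $\mathfrak{J}(I)$ (being one of the intersecting maximals), so $J = M$ does the job.

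In Case~2, every maximal of $R$ containing $I$ is among the bad maximals $M_1,\ldots,M_n$, so $\mathrm{MaxSpec}_I(R) = \{M_{i_1},\ldots,M_{i_s}\}$ is a finite nonempty set. I would set $S = R \setminus \bigcup_{j=1}^{s} M_{i_j}$ and use Proposition~\ref{LC} to identify $R/I \cong R_S/I_S$. The ring $R_S$ is an almost Dedekind domain with exactly the $s$ maximal ideals $M_{i_j} R_S$ (each of whose localizations remains the DVR $R_{M_{i_j}}$), so $R_S$ is a semi-local almost Dedekind domain, hence Dedekind by the classical fact above, and in particular Noetherian. Thus $R/I$ is Noetherian, so $\mathfrak{J}(R/I) = \mathfrak{J}(I)/I$ is finitely generated as an ideal of $R/I$; and since $\mathfrak{J}(I) \subseteq M_{i_1} \subsetneq R$, the choice $J = \mathfrak{J}(I)$ yields the required finitely generated proper ideal.

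The hard part will be the invocation of the classical theorem that a semi-local (equivalently, finite-character) almost Dedekind domain is Dedekind. If a direct citation is not available, I would prove it by using prime avoidance together with the Chinese Remainder Theorem to construct, for each maximal $M_i$ of the semi-local ring, an element $a \in M_i \setminus \bigcup_{j \neq i} M_j$ with $v_{M_i}(a) = 1$; then $(a)$ has the same local data as $M_i$ at every maximal, so $(a) = M_i$ by the uniqueness of Pr\"ufer ideals from their local data, showing that every maximal is principal and hence $R$ is Dedekind. Granting this lemma, Proposition~\ref{LC} makes the rest of the case analysis essentially routine.
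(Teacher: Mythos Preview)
Your proof is correct and follows essentially the same strategy as the paper: both use Corollary~\ref{FG} for the upper bound (reducing the key case via Proposition~\ref{LC} to the classical fact that a semi-local almost Dedekind domain is Dedekind) and Theorem~\ref{66} for the lower bound. The only difference is cosmetic---the paper splits into cases according to whether $I$ lies in finitely or infinitely many maximal ideals, whereas you split according to whether some maximal ideal containing $I$ happens to be finitely generated---but the two partitions lead to exactly the same pair of arguments.
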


We remark that almost Dedekind domains meeting the above conditions are commonplace. In particular, consider the sequence domains defined in \cite{L2006}. 

\begin{proof}
For the first case, we suppose that $I$ is contained in only finitely many maximal ideals and $M_1,M_2,\cdots,M_n$ be the maximal ideals containing $I$. If $S=(\bigcup_{i=1}^n M_i)^c$, then Proposition \ref{LC} shows that $R/I=R_S/I_S$, but as $R_S$ is an almost Dedekind domain with only finitely many maximal ideals, it is Dedekind and hence $\mathfrak{J}(R/I)\cong\mathfrak{J}(R_S/I_S)$ is finitely generated. Hence by Corollary \ref{FG}, $\text{diam}(GF(R))\leq 3$.

Now suppose that $I$ is contained in infinitely many maximal ideals. Note that since there are only finitely many maximal ideals that are not finitely generated, there must be infinitely many finitely generated primes (if not then $R$ is semiquasilocal and hence Dedekind). So if $M$ is a finitely generated maximal ideal containing $I$, then $M/I$ is a finitely generated ideal of $R/I$ containing $\mathfrak{J}(R/I)=\mathfrak{J}(I)/I$ and so again $\text{diam}(GF(R))\leq 3$ by Corollary \ref{FG}.

Equality follows from the existence of maximal ideals that are not finitely generated and Theorem \ref{66}.
\end{proof}

\begin{comment}
\begin{ex}
Let $R$ be an almost Dedekind domain. If $R$ is local, the $\text{diam}(GF(R))=1$ (Theorem \ref{QL}). If $R$ is not quasilocal and Dedekind then $\text{diam}(GF(R))=2$ (Theorem \ref{66}). Finally if $R$ is not Dedekind, then 
\end{ex}
\end{comment}

We also distinguish behavior in the Noetherian case; we ignore the case of a field as the graph in this case is a single vertex.

\begin{thm}
Let $R$ be a commutative ring with identity that is not a field. The following conditions are equivalent.
\begin{enumerate}
\item $R$ is Noetherian.
\item The radius of $GF(R)$ is equal to $1$, and $(0)$ is a center of $GF(R)$.
\item $\text{diam}(GF(R))\leq 2$ and if $I,J\subset R$ then either $I$ and $J$ have an edge between them or there is a minimal path between them passing through $(0)$.
\end{enumerate}
\end{thm}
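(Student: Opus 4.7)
The plan is to prove the cycle $(1)\Rightarrow(2)\Rightarrow(3)\Rightarrow(1)$. The whole argument rests on one simple observation: in $GF(R)$, the zero ideal $(0)$ is adjacent to a nonzero proper ideal $I$ if and only if $I$ is finitely generated, because adjacency requires $I/(0)\cong I$ to be finitely generated as an ideal of $R/(0)\cong R$. With this in hand all three implications become short.

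For $(1)\Rightarrow(2)$, if $R$ is Noetherian then every proper ideal is finitely generated, so every nonzero proper $I$ is adjacent to $(0)$ in $GF(R)$. Since $R$ is not a field, the graph has at least two vertices, so $(0)$ has eccentricity exactly $1$; hence $(0)$ is a center and the radius equals $1$.

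For $(2)\Rightarrow(3)$, the eccentricity of $(0)$ being $1$ means $(0)$ is adjacent to every other proper ideal. For any two distinct proper ideals $I,J$: if one of them equals $(0)$ then the edge $I-J$ is present, and otherwise the walk $I-(0)-J$ is a length-$2$ path. This forces $\text{diam}(GF(R))\leq 2$, and when $I$ and $J$ are nonadjacent the displayed path is automatically a geodesic through $(0)$, giving the second clause of (3).

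For $(3)\Rightarrow(1)$, it suffices to show that every nonzero proper $I$ is finitely generated, and I would do this by applying (3) to the pair $\{I,(0)\}$. If $I$ and $(0)$ are adjacent then $I$ is finitely generated; otherwise (3) would furnish a minimal path between $I$ and $(0)$ passing through $(0)$. But $(0)$ is already an endpoint of this path, and a geodesic in a simple graph cannot repeat a vertex, so this alternative is impossible. Hence the edge $(0)-I$ exists for every nonzero proper $I$, so every proper ideal of $R$ is finitely generated and $R$ is Noetherian. The only (very mild) obstacle is settling on the correct reading of ``minimal path passing through $(0)$'' when one of the endpoints is itself $(0)$; under the natural interpretation that $(0)$ must appear as an \emph{interior} vertex of the geodesic, the contradiction above is immediate and the whole theorem follows cleanly from the basic adjacency characterization stated at the outset.
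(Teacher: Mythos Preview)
Your proof is correct and follows essentially the same route as the paper: all three implications hinge on the observation that $(0)$ is adjacent to a nonzero proper ideal $I$ in $GF(R)$ if and only if $I$ is finitely generated. The one notable difference is in $(2)\Rightarrow(3)$: the paper first deduces that every ideal is finitely generated and then invokes the earlier theorem that finitely generated maximal ideals force $\text{diam}(GF(R))\leq 2$, whereas you bypass this and argue directly that adjacency of $(0)$ to everything already gives a length-$2$ path $I-(0)-J$ between any two nonzero vertices. Your shortcut is cleaner and avoids an unnecessary detour through the comaximality lemma; it also handles the quasilocal case without a separate appeal. Your careful discussion of the ``passing through $(0)$'' interpretation in $(3)\Rightarrow(1)$ makes explicit what the paper leaves implicit in the phrase ``in either case there is an edge between them.''
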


\begin{proof}
For $(1)\Longrightarrow (2)$ we assume that $R$ is Noetherian. In this case there is an edge between $(0)$ and an arbitrary nonzero ideal. So $(0)$ is a center and the radius of $GF(R)$ is precisely $1$.

For $(2)\Longrightarrow (3)$, since $(0)$ is a center and has an edge with an arbitrary ideal, every ideal is finitely generated. In particular, each maximal ideal is finitely generated and so $\text{diam}(GF(R))\leq 2$ by Theorem \ref{66}. Also note that from the above, if $I$ and $J$ do not have an edge between them, then we have the path $I\leftrightarrow(0)\leftrightarrow J$.

Finally, for $(3)\Longrightarrow (1)$, we suppose that $I\subset R$ and consider the pair of ideals $I$ and $(0)$. In either case there is an edge between them, and so $I$ is finitely generated.
\end{proof}

\begin{comment}
\begin{thm}
The diameter is preserved (non-increasing) in $GF(R)$ in localizations, homomorphic images, polynomial, and power series (with the exception of the diameter 1 case).
\end{thm}
\end{comment}

We now give, in sequence, theorem for the behavior of $GF(-)$ for polynomial extensions, power series extensions, and homomorphic images. For the statement of these theorems, the case $n=0$ corresponds to the case in which $R$ is a field (and the graph being a single vertex).

\begin{thm}\label{poly}
Suppose $R$ is a commutative ring with 1. If $\text{diam}(GF(R))=\infty$ then $\text{diam}(GF(R[x_1,x_2,\cdots,x_m]))=4$. If $\text{diam}(GF(R))=n<\infty$ then $\text{diam}(GF(R))\leq \text{diam}(GF(R[x_1,x_2,\cdots, x_m]))$ and the following hold.
\begin{enumerate}
\item If $n\leq 1$, then $\text{diam}(GF(R[x_1,x_2,\cdots,x_m]))=2$.
\item If $R$ is Noetherian, then $\text{diam}(GF(R[x_1,x_2,\cdots,x_m]))=2$.
\item If $n=4$, then $\text{diam}(GF(R[x_1,x_2,\cdots,x_m]))=4$.
\end{enumerate}
\end{thm}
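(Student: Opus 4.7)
The plan is to prove Theorem \ref{poly} by translating each claim into the Jacobson-radical criterion of Corollary \ref{FG} (or the maximal-ideal characterization of Theorem \ref{66}(1)) and then exploiting the isomorphism $R[x_1,\ldots,x_m]/(I,x_1,\ldots,x_m) \cong R/I$ to transport information between $R$ and $R[x_1,\ldots,x_m]$. I would begin by observing that $R[x_1,\ldots,x_m]$ is never quasilocal for $m \geq 1$, since $(\mathfrak{M},x_1)$ and $(\mathfrak{M},x_1-1)$ are distinct maximal ideals of $R[x_1,\ldots,x_m]$ for any maximal $\mathfrak{M}\subset R$; hence the theorem preceding Theorem \ref{66} always gives $\text{diam}(GF(R[x_1,\ldots,x_m])) \leq 4$. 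The proof then proceeds by first establishing the inequality for $n < \infty$, next the three specialized items, and finally the $n = \infty$ case.

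For the general inequality, I would argue contrapositively in the two non-vacuous cases. If $\text{diam}(GF(R[x_1,\ldots,x_m])) = 2$, Theorem \ref{66}(1) forces every maximal ideal of $R[x_1,\ldots,x_m]$ to be finitely generated; then for any maximal $\mathfrak{M}\subset R$, the ideal $(\mathfrak{M},x_1,\ldots,x_m)$ (maximal in $R[x_1,\ldots,x_m]$ with quotient $R/\mathfrak{M}$) is finitely generated by some $f_1,\ldots,f_k$, and evaluating at $x_1 = \cdots = x_m = 0$ yields $\mathfrak{M} = (f_1(0),\ldots,f_k(0))$. So every maximal ideal of $R$ is finitely generated, and Theorem \ref{66}(1) (or Theorem \ref{QL} in the quasilocal case) gives $\text{diam}(GF(R)) \leq 2$. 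If $\text{diam}(GF(R[x_1,\ldots,x_m])) \leq 3$, then for any proper $I\subset R$, the ideal $I' := (I,x_1,\ldots,x_m)$ in $R[x_1,\ldots,x_m]$ is proper, and the isomorphism $R[x_1,\ldots,x_m]/I' \cong R/I$ identifies $\mathfrak{J}(R[x_1,\ldots,x_m]/I')$ with $\mathfrak{J}(R/I)$ and preserves finite generation; Corollary \ref{FG} applied to $R[x_1,\ldots,x_m]$ then produces a finitely generated proper ideal of $R/I$ containing $\mathfrak{J}(R/I)$, and Corollary \ref{FG} applied to $R$ yields $\text{diam}(GF(R)) \leq 3$.

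The three items then fall out quickly. For (2), Hilbert's basis theorem gives $R[x_1,\ldots,x_m]$ Noetherian, so every maximal ideal is finitely generated, and Theorem \ref{66}(1) yields $\text{diam}(GF(R[x_1,\ldots,x_m])) = 2$ (the lower bound coming from the existence of distinct maximal ideals). For (1), $n = 0$ forces $R$ to be a field and $n = 1$ forces $R$ to be a Noetherian chained ring of dimension at most $1$ by Theorem \ref{QL}(1); in either sub-case $R$ is Noetherian, so (2) applies. For (3), the hypothesis $n = 4$ together with Corollary \ref{FG} produces an ideal $I\subset R$ with $\mathfrak{J}(R/I)$ not contained in any finitely generated proper ideal of $R/I$; taking $I' := (I,x_1,\ldots,x_m)$ transports this obstruction verbatim to $R[x_1,\ldots,x_m]$, so Corollary \ref{FG} forces $\text{diam}(GF(R[x_1,\ldots,x_m])) > 3$, and the uniform upper bound $4$ completes the equality.

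The $n = \infty$ case is the same device with $I = (0)$. By Theorem \ref{QL}, $R$ is quasilocal with non-finitely-generated maximal ideal $\mathfrak{M}$; since $R$ is quasilocal, every finitely generated proper ideal of $R$ is contained in $\mathfrak{M}$, so $\mathfrak{M}$ itself cannot lie in such an ideal. Taking $I' := (x_1,\ldots,x_m) \subset R[x_1,\ldots,x_m]$ gives $R[x_1,\ldots,x_m]/I' \cong R$ with the same obstruction on $\mathfrak{J}(R)$, so Corollary \ref{FG} together with the upper bound $4$ yields $\text{diam}(GF(R[x_1,\ldots,x_m])) = 4$. I expect the most delicate point to be the maximal-ideal pullback step in the inequality argument, since it is the one place we must manipulate polynomial generators rather than invoke Corollary \ref{FG} mechanically; once that is in hand, the rest is direct application of results already established.
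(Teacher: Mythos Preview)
Your proof is correct and follows essentially the same strategy as the paper's: both rest on the isomorphism $R[x_1,\ldots,x_m]/(I,x_1,\ldots,x_m)\cong R/I$ to transport the Jacobson-radical criterion of Corollary \ref{FG}, and both invoke Theorem \ref{66}(1) together with Hilbert's basis theorem for the Noetherian items. The only presentational difference is in the inequality $\text{diam}(GF(R))\leq\text{diam}(GF(R[x_1,\ldots,x_m]))$: you argue contrapositively, pulling finite generation back from the polynomial ring via evaluation at zero, whereas the paper pushes the obstruction forward directly (e.g., if $\mathfrak{M}\subset R$ is not finitely generated then neither is $(\mathfrak{M},x_1,\ldots,x_m)$); the paper's direction avoids your ``delicate'' pullback step entirely, but your version is slightly more uniform in that both threshold cases run through the same quotient device.
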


\begin{proof}
For the first statement, if $\text{diam}(GF(R))=\infty$ then $R$ is quasilocal with maximal ideal $\mathfrak{M}$ that is not finitely generated. Now note that $R[x_1,x_2,\cdots, x_n]/(x_1,x_2,\cdots,x_n)\cong R$. Since $\mathfrak{J}(R)=\mathfrak{M}$ is not contained in a finitely generated ideal, we have produced an ideal, namely $(x_1,x_2,\cdots,x_n)$ such that $\mathfrak{J}(R[x_1,x_2,\cdots, x_n]/(x_1,x_2,\cdots,x_n))$ is not contained in a finitely generated ideal and so by Corollary \ref{FG}, $\text{diam}(GF(R[x_1,x_2,\cdots,x_n]))=4$.

For the case $\text{diam}(GF(R))=n<\infty$, we first note that in the case that $\text{diam}(GF(R))\leq 2$ the fact that $R[x_1,x_2,\cdots,x_n]$ is not chained shows by application of Theorem \ref{QL}, that $\text{diam}(GF(R[x_1,x_2,\cdots,x_n]))\geq 2$. If $\text{diam}(GF(R))=3$, then there is a maximal ideal $\mathfrak{M}$ that is not finitely generated. Since the ideal $(\mathfrak{M},x_1,x_2,\cdots,x_n)\subset R[x_1,x_2,\cdots,x_n]$ is not finitely generated, $\text{diam}(GF(R[x_1,x_2,\cdots,x_n]))\geq 3$. Finally, if $\text{diam}(GF(R))=4$, Corollary \ref{FG} gives that there is an ideal $I\subset R$ such that $\mathfrak{J}(R/I)$ is not contained in a finitely generated ideal. As $R[x_1,x_2,\cdots, x_n]/(I,x_1,x_2,\cdots,x_n)\cong R/I$, Corollary \ref{FG} again applies and $\text{diam}(GF(R[x_1,x_2,\cdots,x_n]))=4$.

For $(1)$, if $n=1$ then by Theorem \ref{QL}, $R$ is Noetherian and chained (this statement also holds in the case where $n=0$ and $R$ is a field). Hence $R[x_1,x_2,\cdots,x_n]$ is Noetherian but has infinitely many maximal ideals (so is not chained), therefore by Theorem \ref{66} $\text{diam}(GF(R[x_1,x_2,\cdots,x_n]))=2$. 

For $(2)$, if $R$ is Noetherian, then $R[x_1,x_2,\cdots,x_n]$ is Noetherian and the result follows as a porism of the proof of $(1)$. 

$(3)$ is now immediate.
\end{proof}

\begin{ex}\label{16}
Let $\displaystyle R=\mathbb{F}[x,\frac{y}{x},\frac{y}{x^2},\cdots ]$ where $\mathbb{F}$ is a field and let $V=R_{\mathfrak{M}}$ where $\displaystyle \mathfrak{M}=(x,\frac{y}{x},\frac{y}{x^2},\cdots)$. $V$ is a 2-dimensional
discrete valuation domain with principal maximal ideal (and hence has the property that all of
its maximal ideals are finitely generated); its prime spectrum is $\mathfrak{M}V=(x)\supset\mathfrak{P}=(y,\frac{y}{x},\frac{y}{x^2},\cdots)\supset (0)$. But if we consider the polynomial ring $V[t]$, it
is easy to see that the ideal $\mathfrak{N}:=(\mathfrak{P}[t], xt+1)$ is maximal, but
not finitely generated. Note that the ideal $(\mathfrak{P}^2[t], xt+1)$ has radical $\mathfrak{N}$ and hence $V[t]/(\mathfrak{P}^2[t], xt+1)$ has Jacobson radical $\mathfrak{N}/(\mathfrak{P}^2[t], xt+1)$ but $\mathfrak{N}$ is not finitely generated over $(\mathfrak{P}^2[t], xt+1)$. Thus $\text{diam}(GF(V))=2$ but $\text{diam}(GF(V[t]))=4$. These details will also follow from the following.
\end{ex}

From a more global perspective (in contrast with the class of almost Dedekind domains discussed in Proposition \ref{AD}), the condition of Corollary \ref{FG} appears to make the situation where the diameter of $GF(R)$ is precisely $3$ a reasonably rare occurrence. For example, we consider the class of SFT (for strong finite type) rings introduced by J. Arnold in \cite{Ar1973} as a generalization of Noetherian rings useful in the study of the dimension of power series rings. An ideal $I\subseteq R$ is said to be SFT if there is a finitely generated ideal $B\subseteq I$ and a fixed natural number $m$ such that $x^m\in B$ for all $x\in I$; we say the ring, $R$, is SFT if all of its ideals are SFT.

\begin{prop}\label{SFT}
Let $R$ be SFT, or more generally, any ring with the property that each maximal ideal is the radical of a finitely generated ideal. Then $\text{diam}(GF(R))\neq 3$.
\end{prop}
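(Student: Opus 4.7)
The plan is to reduce the problem to a dichotomy via Theorem \ref{66}: either every maximal ideal of $R$ is finitely generated, in which case $\text{diam}(GF(R))\leq 2$ and we are done, or some maximal ideal $\mathfrak{M}$ fails to be finitely generated, in which case the diameter is at least $3$. In the second scenario, the goal is to push the diameter up to $4$ by exhibiting, via Corollary \ref{FG}, a specific ideal $I\subset R$ such that $\mathfrak{J}(R/I)$ fails to be contained in any proper finitely generated ideal.

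I would first dispose of the quasilocal case by invoking Theorem \ref{QL}, which restricts $\text{diam}(GF(R))$ to the values $1$, $2$, or $\infty$; none of these is $3$. So assume $R$ has at least two maximal ideals and that some maximal ideal $\mathfrak{M}$ is not finitely generated. By hypothesis, choose a finitely generated ideal $B\subseteq\mathfrak{M}$ with $\sqrt{B}=\mathfrak{M}$. The candidate ideal is $I:=B$. Since $\sqrt{B}=\mathfrak{M}$, the only prime of $R$ containing $B$ is $\mathfrak{M}$, so $R/B$ is quasilocal with unique maximal ideal $\mathfrak{M}/B$, and therefore $\mathfrak{J}(R/B)=\mathfrak{M}/B$.

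Next I would check that $\mathfrak{M}/B$ cannot be contained in any proper finitely generated ideal of $R/B$. Any proper ideal of $R/B$ that contains the maximal ideal $\mathfrak{M}/B$ must equal $\mathfrak{M}/B$, so such a containment would force $\mathfrak{M}/B$ itself to be finitely generated. Combined with $B$ finitely generated, Lemma \ref{fg} then yields $\mathfrak{M}$ finitely generated, a contradiction. Hence the hypothesis of Corollary \ref{FG} fails at $I=B$, which gives $\text{diam}(GF(R))>3$, and since we already know $\text{diam}(GF(R))\leq 4$ (when $|\text{MaxSpec}(R)|>1$), we conclude $\text{diam}(GF(R))=4\neq 3$.

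The main obstacle is really just identifying the correct test ideal; once one notices that passing to $R/B$ collapses the Jacobson radical onto the single maximal ideal $\mathfrak{M}/B$, Corollary \ref{FG} and Lemma \ref{fg} finish the argument cleanly. No additional calculations are required, and the SFT hypothesis is used only through its consequence that every maximal ideal is the radical of a finitely generated ideal, which matches the more general form of the statement.
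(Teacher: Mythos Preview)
Your argument is correct and matches the paper's own proof essentially step for step: assume some maximal ideal $\mathfrak{M}$ is not finitely generated, pick a finitely generated $B$ with $\sqrt{B}=\mathfrak{M}$, observe $\mathfrak{J}(R/B)=\mathfrak{M}/B$, and use Lemma~\ref{fg} together with Corollary~\ref{FG} to force the diameter past~$3$. The only difference is that you are slightly more explicit than the paper in separating out the quasilocal case via Theorem~\ref{QL} and in noting that the bound $\text{diam}(GF(R))\leq 4$ is what pins the value at~$4$ rather than~$\infty$.
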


\begin{proof}
We can assume that there is a maximal ideal $\mathfrak{M}$ that is not finitely generated. By assumption, $\mathfrak{M}$ is the radical of a finitely generated ideal $B\subset\mathfrak{M}$. Note that $\mathfrak{J}(R/B)=\mathfrak{J}(B)/B=\mathfrak{M}/B$ and is maximal. If $\mathfrak{M}/B$ is finitely generated, the fact that $B$ is finitely generated implies that $\mathfrak{M}$ is finitely generated. Hence $\mathfrak{J}(R/B)$ is not contained in a finitely generated ideal and so by Corollary \ref{FG}, $\text{diam}(GF(R))=4$.
\end{proof}

It is worth noting that $\text{diam}(GF(R))=1,2,4, \infty$ can occur for SFT rings. Also, Proposition \ref{SFT} is another route to Example \ref{16}. Indeed, the domain $V$ from this example is a $2-$dimensional SFT valuation domain and the results of \cite{kpmix} show that $\text{dim}(V[t][[y]])$ is of finite Krull dimension and hence $V[t]$ must be SFT. Since the maximal ideal $\mathfrak{N}$ is not finitely generated, $\text{diam}(GF(V[t]))=4$.

\begin{thm}
If $R$ is a commutative ring with 1 and $\text{diam}(GF(R))=n$ then the following hold.
\begin{enumerate}[start=0]
\item If $n=0$, then $\text{diam}(GF(R[[x]]))=1$.
\item If $n=1$, then $\text{diam}(GF(R[[x_1,x_2,\cdots,x_m]]))=2$.
\item If $n=2$, then $\text{diam}(GF(R[[x_1,x_2,\cdots,x_m]]))= 2$.
\item If $n=3,4, \infty$, then $\text{diam}(GF(R[[x_1,x_2,\cdots,x_m]]))=n$. 
\end{enumerate}
\end{thm}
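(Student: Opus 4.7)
The backbone of the argument is the classical fact that $x_1,\dots,x_m$ lie in the Jacobson radical of $R[[x_1,\dots,x_m]]$ (because $1+x_i f$ is always a unit), so that
\[
\text{MaxSpec}(R[[\bar x]])=\{(\mathfrak{M},x_1,\dots,x_m):\mathfrak{M}\in\text{MaxSpec}(R)\},
\]
together with the evaluation surjection $\phi\colon R[[\bar x]]\twoheadrightarrow R$ at zero, which sends a proper ideal $I\subseteq R[[\bar x]]$ to a proper ideal $\tilde I:=\phi(I)\subsetneq R$ (since units of $R[[\bar x]]$ have unit constant term). A maximal ideal $(\mathfrak{M},\bar x)$ contains $I$ iff $\mathfrak{M}\supseteq\tilde I$, so unwinding yields
\[
\mathfrak{J}_{R[[\bar x]]}(I)=(\mathfrak{J}_R(\tilde I),x_1,\dots,x_m).
\]
These two facts will do essentially all of the work.

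\textbf{Cases $n\in\{0,1,2\}$.} If $n=0$ then $R$ is a field, so $R[[x]]$ is a DVR, i.e.\ Noetherian, chained, of dimension $1$, and Theorem \ref{QL} gives diameter $1$. For $n\in\{1,2\}$, $R$ is not a field, so picking a nonzero non-unit $a\in R$, the ideals $(a)$ and $(x_1)$ in $R[[\bar x]]$ are incomparable because elements of $(x_1)$ have zero constant term while $a$ does not; hence $R[[\bar x]]$ is not chained. If $n=1$, Theorem \ref{QL} forces $R$ to be Noetherian local and chained, so $R[[\bar x]]$ is Noetherian local with finitely generated maximal ideal; Theorem \ref{QL} then gives diameter $\leq 2$, while the non-chained obstruction rules out diameter $1$, yielding $2$. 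If $n=2$, every $\mathfrak{M}\in\text{MaxSpec}(R)$ is finitely generated, hence so is every $(\mathfrak{M},\bar x)\in\text{MaxSpec}(R[[\bar x]])$; either Theorem \ref{66} (when $R$ is not quasilocal) or Theorem \ref{QL} (when it is), combined with the non-chained obstruction, gives diameter exactly $2$.

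\textbf{The main case $n=3$.} The lower bound is immediate: Theorem \ref{66} provides a non-finitely-generated $\mathfrak{M}\in\text{MaxSpec}(R)$ whose lift $(\mathfrak{M},\bar x)$ is non-finitely-generated in $R[[\bar x]]$, so Theorem \ref{66} forces $\text{diam}\geq 3$. For the upper bound I verify the criterion of Corollary \ref{FG} on $R[[\bar x]]$. Given a proper $I\subseteq R[[\bar x]]$ with $\tilde I=\phi(I)$, Corollary \ref{FG} applied to $R$ produces a proper $K_0:=(\tilde I,a_1,\dots,a_k)$ with $\mathfrak{J}_R(\tilde I)\subseteq K_0$. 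Set $K:=(I,a_1,\dots,a_k,x_1,\dots,x_m)$; then $K/I$ is finitely generated, and $K$ is proper because $\phi(K)=K_0\subsetneq R$. To see $\mathfrak{J}_{R[[\bar x]]}(I)\subseteq K$, invoke the displayed identity: any $r\in\mathfrak{J}_R(\tilde I)\subseteq K_0$ can be written $r=\sum s_j+\sum c_i a_i$ with $s_j\in\tilde I$, and writing each $s_j=f_j(0)$ for some $f_j\in I$ rewrites $r$ as an element of $I$ plus an element of $(\bar x)$ plus a combination of the $a_i$, hence $r\in K$. Corollary \ref{FG} then delivers diameter $\leq 3$.

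\textbf{Cases $n=4$ and $n=\infty$.} For $n=4$, Corollary \ref{FG} supplies a witness $I\subseteq R$ whose Jacobson radical mod $I$ is not contained in any finitely generated proper ideal of $R/I$; setting $J:=(IR[[\bar x]],x_1,\dots,x_m)$, the isomorphism $R[[\bar x]]/J\cong R/I$ transfers the obstruction to $R[[\bar x]]$, and Corollary \ref{FG} forces diameter $=4$ (the upper bound $\leq 4$ is automatic since $R[[\bar x]]$ is not quasilocal in this case). For $n=\infty$, $R$ is quasilocal with non-finitely-generated maximal ideal $\mathfrak{M}$, so $R[[\bar x]]$ is quasilocal with non-finitely-generated maximal ideal $(\mathfrak{M},\bar x)$, and Theorem \ref{QL} yields $\text{diam}=\infty$. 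I expect the computation of $\mathfrak{J}_{R[[\bar x]]}(I)$ and the verification that the candidate lift $K$ in case $n=3$ is actually proper (which crucially uses both the evaluation map and the containment $(\bar x)\subseteq\mathfrak{J}(R[[\bar x]])$) to be the most delicate step.
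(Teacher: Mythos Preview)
Your proof is correct and follows essentially the same approach as the paper's: the key identity $\mathfrak{J}_{R[[\bar x]]}(I)=(\mathfrak{J}_R(\tilde I),\bar x)$, the lifting of a finitely-generated-over-$\tilde I$ ideal containing $\mathfrak{J}_R(\tilde I)$ to one finitely generated over $I$, and the appeals to Theorem~\ref{QL}, Theorem~\ref{66}, and Corollary~\ref{FG} all match. The only cosmetic differences are that you handle the multivariable case directly in the $n=3$ step (the paper does one variable and inducts) and you are slightly more explicit than the paper about why the diameter is \emph{exactly} $2$ in cases $n=1,2$ (via the incomparability of $(a)$ and $(x_1)$) and about why $R[[\bar x]]$ is not quasilocal when $n=4$.
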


\begin{proof}
For $(0)$ we note that if $R$ is a field then $R[[x]]$ is a Noetherian valuation domain. 

For $(1)$, we have by Theorem \ref{QL} that $R$ is Noetherian and quasilocal and hence so is $R[[x_1,x_2,\cdots,x_n]]$ (but not chained if $R$ is not a field) and so $\text{diam}(GF(R[[x_1,x_2,\cdots,x_n]]))=2$ by Theorem \ref{QL}.

$(2)$ is similar to the previous. Since each maximal ideal is finitely generated and the maximal ideals of $R[[x_1,x_2,\cdots,x_n]]$ are of the form $(\mathfrak{M},x_1,x_2,\cdots, x_n)$, we have that all maximal ideals of $R[[x_1,x_2,\cdots,x_n]]$ are finitely generated as well.

For $(3)$, we first consider $n=3$. Suppose that $R$ has the property that for all $I\subset R$, $\mathfrak{J}(R/I)$ is contained in a finitely generated ideal. Now consider an ideal $A\subset R[[x]]$ and let $A_0=\{f(0)\vert f(x)\in A\}$. An easy computation shows that $\mathfrak{J}(A)$ is precisely equal to $(\mathfrak{J}(A_0),x)$ where $\mathfrak{J}(A_0)$ is the Jacobson radical of $A_0$ in $R$. Note that there is an ideal $F\subset R$ such that $F$ is finitely generated over $\mathfrak{J}(A_0)$ and $A_0$ (that is, $F/A_0$ is a finitely generated ideal containing $\mathfrak{J}(A_0)/A_0=\mathfrak{J}(R/A_0)$). We conclude that $(F,x)$ is finitely generated over $\mathfrak{J}(A)=(\mathfrak{J}(A_0),x)$ and with this in hand, we will show that $(F,x)$ is finitely generated over $A$. To this end, we let $(y_1,y_2,\cdots, y_n)$ be the generators of $F$ over $A_0$ and for each $1\leq i\leq n$, find $a_i(x)\in A$ such that $a_i(0)=y_i$. Now suppose that $\alpha p_1(x)+xp_2(x)\in (F,x)$, with $\alpha\in F$ and $p_1(x), p_2(x)\in R[[x]]$. Since $x$ is a generator, we can assume, without loss of generality, that $p_1:=p_1(x)\in R$. Since $\alpha p_1\in F$, we can write $\alpha p_1=r_1y_1+r_2y_2+\cdots +r_ny_n+a_0$ with $r_i\in R, a_0\in A_0$. Note that $\alpha p_1+xp_2(x)=r_1a_1(x)+r_2a_2(x)+\cdots +r_na_n(x)+a(x)+xg(x)$ where $a(x)\in A$ with $a(0)=a_0$ and $g(x)\in R[[x]]$. So $(F,x)$ is generated over $A$ by the elements $a_i(x)$ and $x$. Hence by Corollary \ref{FG} $\text{diam}(R[[x]])=3$. The multivariable case follows immediately by induction.

If $n=4$ then there is an ideal $I\subset R$ such that $\mathfrak{J}(R/I)$ is not contained in a finitely generated ideal. Note that as $R[[x]]/(I,x)\cong R/I$ this property persists in the power series extension. Again, induction completes this.

Finally, if $n=\infty$, $R$ is quasilocal with maximal ideal $\mathfrak{M}$ that is not finitely generated by Theorem \ref{QL}. Since $R[[x_1,x_2,\cdots,x_n]]$ is quasilocal with maximal ideal $(\mathfrak{M}, x_1,x_2,\cdots, x_n)$ the diameter of $GF(R[[x_1,x_2,\cdots,x_n]])$ is also infinite.
\end{proof}

We round this out by recording behavior in homomorphic images.

\begin{thm}
Let $R$ be a commutative ring with 1 and $I\subset R$ an ideal. If $\text{diam}(GF(R))=n$ then the following hold.
\begin{enumerate}
\item If $n\leq 2$, then $\text{diam}(GF(R/I))\leq n$.
\item If $n=3$, then $\text{diam}(GF(R/I))\leq n$.
\item If $n=4$, then $\text{diam}(GF(R/I))\leq n$ or  $\text{diam}(GF(R/I))=\infty$.
\end{enumerate}
\end{thm}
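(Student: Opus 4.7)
The plan is to identify $GF(R/I)$ with the induced subgraph of $GF(R)$ on the set of proper ideals of $R$ containing $I$. The Lattice Isomorphism Theorem gives a bijection between proper ideals of $R/I$ and ideals $A$ with $I \subseteq A \subsetneq R$ via $A/I$; since $(R/I)/(A/I) \cong R/A$, the property ``$B$ is finitely generated over $A$'' is intrinsic, so an edge $A\relbar B$ in $GF(R)$ corresponds exactly to an edge $A/I\relbar B/I$ in $GF(R/I)$. In particular, the maximal ideals of $R/I$ are precisely the $\mathfrak{M}/I$ for $\mathfrak{M}$ maximal in $R$ containing $I$, and each such ideal is finitely generated in $R/I$ whenever $\mathfrak{M}$ is finitely generated in $R$.

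For $n\leq 2$, I would proceed case-by-case using Theorem \ref{QL} and Theorem \ref{66}. The case $n=0$ is trivial. For $n=1$, the structural properties (quasilocal, Noetherian, chained, dimension at most $1$) that characterize diameter $1$ via Theorem \ref{QL} all pass to $R/I$, giving $\text{diam}(GF(R/I))\leq 1$. For $n=2$, the characterizations give either a finitely generated maximal ideal (quasilocal case) or every maximal ideal finitely generated (non-quasilocal case); both conditions pass to $R/I$, and depending on whether $R/I$ remains non-quasilocal or collapses to a quasilocal ring, the appropriate one of Theorem \ref{QL} or Theorem \ref{66} yields $\text{diam}(GF(R/I))\leq 2$.

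For $n=3$, the argument hinges on Corollary \ref{FG}, which characterizes $\text{diam}(GF(R))\leq 3$ by the condition that for every proper $J\subset R$, $\mathfrak{J}(R/J)$ is contained in a finitely generated proper ideal. This condition descends verbatim to $R/I$, since every proper ideal of $R/I$ has the form $J/I$ with $I\subseteq J\subsetneq R$, and $(R/I)/(J/I)\cong R/J$, to which the hypothesis on $R$ directly applies. Applying Corollary \ref{FG} in reverse then yields $\text{diam}(GF(R/I))\leq 3$.

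For $n=4$, no new work is needed beyond the existing structural dichotomy: combining Theorem \ref{QL} for the quasilocal case with the uniform upper bound $\text{diam}(GF(S))\leq 4$ that holds whenever $|\text{MaxSpec}(S)|>1$ (the theorem following Lemma \ref{num}), we see that for every ring $S$ with identity, $\text{diam}(GF(S))\in \{0,1,2,3,4,\infty\}$, the value $\infty$ arising precisely when $S$ is quasilocal with maximal ideal that is not finitely generated. Taking $S=R/I$ yields the claim. The main obstacle throughout is simply being thorough about the case split on whether $R/I$ is quasilocal (which can occur even when $R$ is not), and tracking which inherited property matches the hypothesis of the theorem one wishes to invoke.
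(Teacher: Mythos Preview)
Your proof is correct and follows essentially the same approach as the paper: for $n\leq 2$ you use that finite generation of maximal ideals passes to $R/I$, for $n=3$ you use that the condition of Corollary \ref{FG} passes via $(R/I)/(J/I)\cong R/J$, and for $n=4$ you invoke the general dichotomy on possible diameters. Your treatment of $n\leq 2$ is actually more careful than the paper's, which only argues that all maximal ideals being finitely generated yields $\text{diam}(GF(R/I))\leq 2$ without separately verifying the sharper bound $\leq n$ when $n\in\{0,1\}$; your case split via the structural characterization in Theorem \ref{QL} handles this cleanly.
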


\begin{proof}
If $n\leq 2$ it is necessarily true that the maximal ideals of $R$ are finitely generated and hence the same is true of $R/I$. 

For the second statement, we suppose that $\text{diam}(GF(R))=3$, and so in particular, for all $I\subset R$, $\mathfrak{J}(R/I)$ is contained in a finitely generated ideal. So if $J/I$ is an ideal of $R/I$ then $(R/I)/(J/I)\cong R/J$ and so $GF(R/I)$ has diameter of no more than $3$.

For $(3)$, if $\text{diam}(GF(R/I))$ is finite, then the first statement is clear. Note however, that if $R$ is a ring with a maximal ideal $\mathfrak{M}$ that is not finitely generated, and $I\subset\mathfrak{M}$ has the property that $\sqrt{I}=\mathfrak{M}$ and $\mathfrak{M}/I$ is not finitely generated, then $R/I$ is quasilocal and hence its graph is not connected by Theorem \ref{QL}. For a concrete example of this, let $V$ be a $1-$dimensional nondiscrete valuation domain. By Theorem \ref{QL} $\text{diam}(GF(V))=\infty$ and $\text{diam}(GF(V[x]))=4$ by Theorem \ref{poly}; since $V\cong V[x]/(x)$ we have our example.
\end{proof}

\begin{rem}
As a final remark, we point out that it would be interesting to know if the property that $\text{diam}(GF(R))=3$ is stable under polynomial extensions. More generally, a more complete understanding of stability properties of the diameter of the graphs $GF(-)$ under polynomial extensions for $n=2,3$ would be desirable.
\end{rem}

\bibliography{biblio2}{}
\bibliographystyle{plain}

\end{document}